\newtheorem{theorem}{Theorem}[section]
\newtheorem{lemma}[theorem]{Lemma}
\newtheorem{corollary}[theorem]{Corollary}
\newtheorem{proposition}[theorem]{Proposition}
\theoremstyle{definition}
\newtheorem{definition}[theorem]{Definition}
\newtheorem{example}[theorem]{Example}
\newtheorem{conjecture}[theorem]{Conjecture}
\theoremstyle{remark}
\newtheorem{remark}[theorem]{Remark}
\numberwithin{equation}{section}
\DeclareMathOperator{\Coker}{Coker}
\DeclareMathOperator{\Ext}{Ext}
\DeclareMathOperator{\GL}{GL}
\DeclareMathOperator{\Gr}{Gr}
\DeclareMathOperator{\Hom}{Hom}
\DeclareMathOperator{\Id}{Id}
\DeclareMathOperator{\PHom}{PHom}
\DeclareMathOperator{\Rep}{Rep}
\DeclareMathOperator{\SI}{SI}
\DeclareMathOperator{\SL}{SL}
\DeclareMathOperator{\Spec}{Spec}
\DeclareMathOperator{\Tr}{Tr}
\newcommand{\op}[1]{\operatorname{#1}}
\newcommand{\mc}[1]{\mathcal{#1}}
\newcommand{\mb}[1]{\mathbb{#1}}
\newcommand{\mf}[1]{\mathfrak{#1}}
\renewcommand{\b}[1]{\bold{#1}}
\newcommand{\bs}[1]{\boldsymbol{#1}}
\newcommand{\br}[1]{\overline{#1}}
\newcommand{\wtd}[1]{\widetilde{#1}}
\newcommand{\ep}{{\epsilon}}
\renewcommand{\d}{{\sf d}}
\newcommand{\e}{{\sf e}}
\newcommand{\f}{{\sf f}}
\newcommand{\g}{{\sf g}}
\renewcommand{\t}{{\sf t}}
\newcommand{\proj}{\operatorname{proj}\text{-}}
\newcommand{\ckQ}{\widehat{k\Delta}}
\newcommand{\innerprod}[1]{\langle#1\rangle}
\newcommand{\sm}[1]{{\left(\begin{smallmatrix}#1\end{smallmatrix}\right)}}
\newcommand{\vsm}[1]{{\left[\begin{smallmatrix}#1\end{smallmatrix}\right]}}
\newcommand{\uca}{\br{\mc{C}}}
\newcommand{\ul}[1]{\underline{#1}}
\newcommand{\bl}{{\beta_l}}
\begin{document}

\title{Extending Upper Cluster Algebras}
\author{Jiarui Fei}
\address{National Center for Theoretical Sciences, Taipei 10617, Taiwan}
\email{jrfei@ncts.ntu.edu.tw}
\author{Jerzy Weyman}
\address{Department of Mathematics, University of Connecticut, Storrs, CT}
\email{jerzy.weyman@uconn.edu}
\thanks{}

%    General info
\subjclass[2010]{Primary 13F60, 16G20; Secondary 13A50}

\date{}
\dedicatory{}
\keywords{Upper Cluster Algebra, Extension, Quiver Representation, Quiver with Potential, Cluster Model, Semi-invariant Ring, $m$-tuple Flags}

\begin{abstract} Let $S$ be an upper cluster algebra, which is a subalgebra of $R$.
	Suppose that there is some cluster variable $x_e$ such that ${R}_{{x}_e} = S[{x}_e^{\pm 1}]$.
	We try to understand under which conditions  ${R}$ is an upper cluster algebra, and how the quiver of $R$ relates to that of $S$.
	Moreover, if the restriction of $(\Delta,W)$ to some subquiver is a cluster model,
	we give a sufficient condition for $(\Delta,W)$ itself being a cluster model.
	As an application, we show that the semi-invariant ring of any complete $m$-tuple flags is an upper cluster algebra whose quiver is explicitly given.
	Moreover, the quiver with its rigid potential is a polyhedral cluster model.  
\end{abstract}

\maketitle

\section*{Introduction}

The notion of {\em cluster algebra} of Fomin-Zelevinsky  turned out be ubiquitous in algebraic Lie theory and invariant theory.
It was realized later that in some contexts (eg., \cite{FG,Fart,Fk2})  the {\em upper cluster algebra} \cite{BFZ} is a more useful notion than the cluster algebra.
In this paper we try to answer the questions converse to the ones treated in \cite{Fs2}.

One of the problems treated in \cite{Fs2} is the following.
Suppose that $R$ is an upper cluster algebra $\uca(\Delta,\b{x})$ and $e$ is a vertex in the ice quiver $\Delta$.
Let $x_e$ be a cluster or a coefficient variable in $\b{x}$ corresponding to $e$.
Under certain condition on $x_e$, we constructed a subalgebra $S\subset R$
such that the localization $R_{x_e}$ is equal to $S[x_e^{\pm 1}]$ and $S$ is the upper cluster algebra $\uca(\Delta_e,\b{x}_e)$.
Here, $\Delta_e$ is obtained from  $\Delta$ by deleting $e$.
In terminology of \cite{Fs2} the seed $(\Delta_e,\b{x}_e)$ is {\em projected} from $(\Delta,\b{x})$ through $e$.
In this case, we also say $R$ is a {\em cluster extension} of $S$ via $x_e$. 
Moreover, if $(\Delta,W)$ is a cluster model for some potential $W$, then so is $(\Delta_e,W_e)$ where $W_e$ is the restriction of $W$ to $\Delta_e$.
Recall that we say that $(\Delta,W)$ is a {\em cluster model} if the {\em generic cluster character} maps the set of {\em $\mu$-supported $\g$-vectors} onto a basis of $\uca(\Delta)$. We will explain these notions in Section \ref{S:GUCAQP}.

It is natural to ask the following converse. We keep assuming that $S$ is an upper cluster algebra $\uca(\Delta,\b{x})$ and $S$ is a subalgebra of $R$.
Suppose that there is some ${x}_e\in {R}$ such that ${R}_{{x}_e} = S[{x}_e^{\pm 1}]$.
We ask under which conditions  ${R}$ is an upper cluster algebra, and what is its seed $(\ul{\Delta},\ul{\b{x}})$.
Moreover, if $(\Delta,W)$ is a cluster model,  when can we make some IQP (ice quiver with potential) $(\ul{\Delta},\ul{W})$ a cluster model of $\uca(\ul{\Delta})$?
These two converses are clearly harder than the original questions.

For the second question about the cluster model, we obtain a quite satisfactory sufficient condition. This solution is partially motivated by \cite{Mu,GHKK}.
Recall that a (frozen or mutable) vertex $e$ can be {\em optimized} in $\Delta$ if there is a sequence of {\em mutations} such that $e$ is a sink or a source of $\Delta$.
This notion is related to the covering pair technique introduced in \cite{Mu}. 
We say that it can be optimized in an IQP $(\Delta,W)$ if in addition such a sequence is {\em admissible}.
In practice, we usually need to deal with a set of vertices $\bs{e}$ rather than a single one, but the generalization is straightforward.

\begin{theorem} \label{T:intro1} Let $\ul{W}$ be any potential on $\ul{\Delta}$ such that its restriction on $\Delta$ is $W$.
	Suppose that $B(\Delta)$ has full rank, and each vertex in $\bs{e}$ can be optimized in $(\ul{\Delta},\ul{W})$.
	If $(\Delta,W)$ is a cluster model, then so is $(\ul{\Delta},\ul{W})$.
\end{theorem}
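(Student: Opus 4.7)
The plan is to exploit mutation-invariance of the cluster model property to reduce to the case where every vertex in $\bs{e}$ is a sink or a source of $\ul{\Delta}$, and then show that in this rigid configuration the cluster character factors through the restriction to $\Delta$ up to a monomial in the variables at $\bs{e}$. First, I would recall (or, if not already in the paper, verify) that applying a single admissible mutation $\mu_k$ sends the triple (generic cluster character, $\mu$-supported $\g$-vectors, $\uca$) to the mutated one compatibly, so that being a cluster model is preserved under admissible sequences of mutations. Since by hypothesis each $e\in\bs{e}$ admits an admissible mutation sequence optimizing it, and since such a sequence can be performed on the full IQP $(\ul{\Delta},\ul{W})$ (its restriction to $\Delta$ is simultaneously an admissible sequence for $(\Delta,W)$), we may replace $(\ul{\Delta},\ul{W})$ with its mutation and reduce to the situation where every $e\in\bs{e}$ is a source or a sink in $\ul{\Delta}$, and $(\Delta,W)$ remains a cluster model.

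In this reduced situation, the key observation is that if $e$ is (say) a source in $\ul{\Delta}$, then for any decorated representation $\mc{M}$ of the Jacobian algebra of $(\ul{\Delta},\ul{W})$, the component of $\mc{M}$ at $e$ is freely adjustable without affecting the representation structure on $\Delta$, and the $\g$-vector of $\mc{M}$ decomposes as $\g(\mc{M}) = \g(\mc{M}|_\Delta) + \g_e$, where $\g_e$ is supported at $e$. I would next verify that the generic cluster character satisfies a factorization
\[
C_{\ul{W}}(\mc{M}) \;=\; C_W(\mc{M}|_{\Delta}) \cdot x^{\g_e},
\]
by using the source/sink property to simplify the relevant $F$-polynomial and $\g$-vector contributions at $e$, and the fact that $R_{x_e}=S[x_e^{\pm 1}]$ together with full rank of $B(\Delta)$ (which gives full rank of $B(\ul{\Delta})$) to identify $\uca(\ul{\Delta}) \otimes \mb{Z}[x_e^{\pm 1}]$ with $\uca(\Delta)[x_e^{\pm 1}]$.

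With this factorization in hand, the cluster model property lifts as follows. The $\mu$-supported $\g$-vectors of $(\ul{\Delta},\ul{W})$ are exactly the pairs $(\g_0, \g_e)$ with $\g_0$ a $\mu$-supported $\g$-vector of $(\Delta,W)$ (here I use that optimization makes the $\mu$-support conditions at $e$ trivial, since no nontrivial mutation is required at a source/sink to check $\mu$-support of the relevant factor). Under the factorization above, the image of this set under $C_{\ul{W}}$ is $\{C_W(\g_0)\cdot x^{\g_e}\}$, which is a $\mb{Z}$-basis of $\uca(\Delta)\otimes \mb{Z}[x_{\bs{e}}^{\pm 1}]$ by the cluster model hypothesis on $(\Delta,W)$; combined with the identification above and a standard clearing-of-denominators argument to descend from Laurent polynomials in $x_{\bs{e}}$ to polynomials, this gives a basis of $\uca(\ul{\Delta})$.

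The main obstacle I expect is establishing the factorization of $C_{\ul{W}}$ and, correspondingly, the exact description of $\mu$-supported $\g$-vectors for $(\ul{\Delta},\ul{W})$ in terms of those for $(\Delta,W)$. The source/sink configuration simplifies the $F$-polynomial computation since the arrows at $e$ contribute only a monomial, but carefully handling the frozen/mutable distinction for vertices in $\bs{e}$, and checking that the $\mu$-support condition on $\ul{\Delta}$ does not impose extra constraints beyond those on $\Delta$, will require a delicate case analysis. Mutation-invariance of the cluster model property (step one) may also need a careful separate verification if it is not already recorded earlier in the paper.
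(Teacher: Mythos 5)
Your high-level strategy -- reduce to a sink/source configuration by mutation invariance, factor the generic cluster character, identify the $\mu$-supported $\g$-vectors, and descend by a degree argument -- matches the paper's approach in broad outline, but there is a genuine gap in your first step. You assert that $(\ul{\Delta},\ul{W})$ can be mutated so that \emph{every} vertex of $\bs{e}$ is \emph{simultaneously} a sink or a source. The hypothesis only supplies, for each individual $e\in\bs{e}$, an admissible sequence optimizing that one vertex; nothing guarantees these sequences can be combined, and a mutation sequence that makes $e_1$ a source need not leave $e_2$ a sink or source. The paper avoids this by inducting on $|\bs{e}|$: one adjoins a single vertex $e_{k+1}$ to the already-verified intermediate quiver $\Delta_k$, applies the optimizing sequence for that one vertex alone, invokes a single-vertex sink/source lemma (Lemma \ref{L:sink_model} or Corollary \ref{C:musink}), and mutates back. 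Only one vertex ever needs to be a sink/source at a time. Without the induction, your reduction does not follow from the stated hypothesis.

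The second issue, which you flag as a ``main obstacle'' but leave unresolved, is that the sink/source lemma is substantively different in the frozen and mutable cases, and this case split is really the bulk of the proof. For a frozen source $e$ the paper proves $G(\ul{\Delta},W)=G(\Delta,W)+\mb{Z}_{\geq 0}\e_e$ and $C_W((\g,h))=C_W(\g)\,\ul{x}_e^{\,h+\g\Theta}$, where the $\g\Theta$ correction comes from the balanced-extension relation $\ul{\b{x}}(u)=\b{x}(u)\ul{\b{x}}(\bs{e})^{\ul{\Theta}(u)}$; that correction is absent from your proposed factorization and is precisely what makes the ``clearing-of-denominators'' degree argument work. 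For a mutable sink $e$, by contrast, $G(\ul{\Delta},W)=G(\Delta,W)+\mb{Z}\e_e$ extends in \emph{both} directions, the monomial is $(x_e')^h$ rather than a power of $\ul{x}_e$, and establishing $\uca(\ul{\Delta})=\uca(\ul{\Delta}^e)[x_e']$ requires a separate ring-theoretic argument (Proposition \ref{P:muext=}) that needs the CR1 and Noetherian hypotheses -- this is why the body-of-paper version of the theorem (Theorem \ref{T:model_muext}) adds a Noetherian condition on $\uca(\Delta)$ that does not appear in the introductory statement you were given. As written, your sketch does not yet reach either sink/source lemma in a form that makes the final basis argument go through.
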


\noindent Although this is not an ``if and only if" solution, it already covers many interesting cases, including the ice quivers in Theorem \ref{T:intro2} and in \cite{Fk2}.

For the first question, there usually exists a natural candidate for the seed $(\ul{\Delta},\ul{\b{x}})$ such that 
$\Delta=\ul{\Delta}_e$, $x_e= \ul{\b{x}}(e)$, and $R\supseteq \uca(\ul{\Delta},\ul{\b{x}})$.
However, we are unable to get the equality without help from another subalgebra.
To be more precise, what we need is another seed $(\ul{\Delta}',\ul{\b{x}}')$ mutation equivalent to $(\ul{\Delta},\ul{\b{x}})$ and a vertex $e'$ 
such that $\Delta'=\ul{\Delta}'_{e'}$ and ${R}_{x'_{e'}} = S'[{x'_{e'}}^{\pm 1}]$ for another subalgebra $S'=\uca({\Delta}',\b{x}')$.
A simple observation is that if the common zero locus of $x_e$ and ${x'_{e'}}$ has codimension $2$ in $\Spec R$, 
then basic algebraic geometry shows that $R=\uca(\ul{\Delta},\ul{\b{x}})$.
We will illustrate this technique in two examples.

Let $S_l^m$ be the $m$-tuple flag quiver of length $l$, and $\beta_l$ be its standard dimension vector (see Example \ref{ex:Slm}).
One of the main results in \cite{Fs1} is that the {\em semi-invariant ring} $\SI_\bl(S_l^3)$ is the upper cluster algebra $\uca(\Delta_l,\b{s}_l)$,
where $\Delta_l$ is the {\em ice hive quiver} of size $l$ and $\b{s}_l$ is a set of {\em Schofield's semi-invariants}.
The first example of usefulness of our technique is to give another proof of this result based on  results in \cite{Fs1,Fs2} but independent of Knutson-Tao's hive model \cite{KT}.
The new proof is simpler and more conceptual.
Motivated by several constructions of Fock-Goncharov in \cite{FG}, our second example will generalize this result to arbitrary $m\geq 3$.
Let $\mc{D}_m$ be the disk with $m$ marked points on the boundary, and $\mb{T}$ be any {\em ideal triangulation} of $\mc{D}_m$.
We can glue seeds $(\Delta_l,\b{s}_l)$ together according to $\mb{T}$, and obtain a new seed denoted by $(\Diamond_l(\mb{T}),\b{s}_l(\mb{T}))$. 
We will explain the recipe for gluing in Section \ref{S:glue}.

\begin{theorem} \label{T:intro2} The semi-invariant ring $\SI_\bl(S_l^m)$ is equal to the upper cluster algebra $\uca(\Diamond_l(\mb{T}),\b{s}_l(\mb{T}))$ for any triangulation $\mb{T}$ of $\mc{D}_m$.
	Moreover, $(\Diamond_l(\mb{T}),W_l(\mb{T}))$ is a polyhedral cluster model.
\end{theorem}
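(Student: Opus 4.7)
The plan is a double induction, on $m$ and on the complexity of the triangulation. The base case $m=3$ (a single triangle) is the main result of \cite{Fs1}, which yields both $\SI_\bl(S_l^3)=\uca(\Delta_l,\b{s}_l)$ and the polyhedral cluster model property of $(\Delta_l,W_l)$. I would first prove that the conjectured seed is flip-invariant: if $\mb{T}'$ differs from $\mb{T}$ by a single diagonal flip, the two seeds $(\Diamond_l(\mb{T}),\b{s}_l(\mb{T}))$ and $(\Diamond_l(\mb{T}'),\b{s}_l(\mb{T}'))$ should be related by a sequence of mutations confined to the quadrilateral formed by the two triangles sharing the flipped edge. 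This reduces the flip check to the case $m=4$, where a direct matching of the two gluings can be carried out. Since any two triangulations of $\mc{D}_m$ are connected by flips, it then suffices to prove the theorem for one convenient triangulation, say the fan triangulation $\mb{T}_{\mathrm{fan}}$ emanating from a fixed marked point.

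Next I would induct on $m$. Going from $m-1$ to $m$ via the fan, one extra triangle is glued along a boundary edge. Set $R:=\SI_\bl(S_l^m)$ and $S:=\SI_\bl(S_l^{m-1})$, viewed as a subalgebra of $R$ by pullback along the projection forgetting the new flag. I would identify the cluster variable $x_e\in\b{s}_l(\mb{T}_{\mathrm{fan}})$ associated to the frozen vertex on the glued edge and show by a direct Schofield semi-invariant computation that $R_{x_e}=S[x_e^{\pm 1}]$; combined with the induction hypothesis and the results of \cite{Fs2}, this yields $R\supseteq\uca(\Diamond_l(\mb{T}),\b{s}_l(\mb{T}))$. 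To upgrade the inclusion to equality, I would invoke the codimension-$2$ method sketched in the introduction: using a second fan triangulation based at a different marked point produces a mutation-equivalent seed $(\ul{\Delta}',\ul{\b{x}}')$ with another distinguished vertex $e'$ playing the symmetric role, and one argues that the common zero locus of $x_e$ and $x'_{e'}$ on $\Spec R$ has codimension at least $2$, since both Schofield semi-invariants vanish only on loci of non-generic flag incidences in prescribed positions.

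For the polyhedral cluster model assertion, I would apply Theorem \ref{T:intro1} iteratively along the same inductive scheme. At each stage of attaching a new triangle, let $\bs{e}$ be the set of vertices contributed by that gluing. The hive subquiver corresponding to the new triangle has the feature that its vertices admit a short sequence of mutations, localized inside that hive, turning each element of $\bs{e}$ into a source or sink of the enlarged IQP; admissibility of this sequence should follow from rigidity of $W_l(\mb{T})$ as described in Section \ref{S:glue}. Full rank of the exchange matrix $B(\Diamond_l(\mb{T}))$ is preserved under gluing once it is established for $B(\Delta_l)$, which is already known from the $m=3$ case.

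The hardest parts will be, first, the codimension-$2$ estimate, which requires a careful geometric analysis of the vanishing loci of the relevant Schofield semi-invariants in the moduli of $m$-tuple flags; and second, verifying admissibility of the optimizing mutation sequence inside the glued IQP, which is a concrete but delicate check that depends on the explicit form of the rigid potential $W_l(\mb{T})$ produced by the gluing recipe. Both are local-to-global checks that I expect to reduce essentially to the hive $m=3$ situation, but the bookkeeping across the triangulation is the main technical burden.
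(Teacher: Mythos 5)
Your overall architecture is in the same spirit as the paper (flip-invariance of the seed, reduce to a localized identity, finish with a codimension-$2$ argument via a second triangulation, and use Theorem~\ref{T:intro1} for the cluster-model statement). But the inductive step at the heart of your equality argument does not work as stated, and this is where your route diverges from the paper's in an essential way.

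You propose to set $S:=\SI_\bl(S_l^{m-1})\subset R:=\SI_\bl(S_l^m)$ via the projection that forgets the $m$-th flag, single out a cluster variable $x_e$ on the newly-added diagonal, and show $R_{x_e}=S[x_e^{\pm 1}]$. This cannot hold: the Krull dimension of $R$ exceeds that of $S$ by $\frac{1}{2}(l-1)(l+2)$ (this is exactly the number of new vertices contributed by the added hive), while $\dim S[x_e^{\pm 1}]=\dim S+1$. Moreover, even if you upgrade $x_e$ to the whole set $\bs{e}$ of newly-added vertices, the full subquiver of $\Diamond_l(\mb{T}_m)$ on the remaining vertices is \emph{not} $\Diamond_l(\mb{T}_{m-1})$: the shared diagonal vertices are mutable in $\Diamond_l(\mb{T}_m)$ but frozen in $\Diamond_l(\mb{T}_{m-1})$, and deleting vertices preserves the frozen/mutable status of the rest. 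So identifying the ``small'' seed with the $(m-1)$-tuple seed is not valid, and the extension framework of the introduction does not apply in the form you invoke. The paper sidesteps induction on $m$ altogether: it shows directly that $\SI_\bl(S_l^m)_{D(\mb{T})}=\uca(\Diamond_l(\mb{T}),\b{s}_l(\mb{T}))_{D(\mb{T})}$ via a closed embedding $X_m^\circ\hookrightarrow\prod_{\t}(X_3^\t)^\circ$ (Lemma~\ref{L:embedding}), which reduces everything to the $m=3$ case simultaneously across all triangles; this together with Lemma~\ref{L:reduce2frozen} gives Corollary~\ref{C:local=} with no induction.

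Your codimension-$2$ step is fine in spirit (two fans based at adjacent marked points share no diagonal), but the precise argument is factoriality of $\SI_\bl(S_l^m)$ plus the observation that every factor of $D(\mb{T})$ and $D(\mb{T}^\dagger)$ is irreducible (Lemma~\ref{L:irreducible}) with all factors distinct; ``prescribed positions of non-generic incidence'' is not quite enough without this. For the cluster-model assertion, your plan of iterating Theorem~\ref{T:intro1} is correct in outline, but the claim that the optimizing sequence stays ``localized inside that hive'' is too optimistic: the paper's proof of Proposition~\ref{P:Dm} has to take a detour through the twist operation and a non-consistent alternating triangulation, must delete rows of a hive strip in a specific order (Lemma~\ref{L:hiverow}), and has to carefully preserve one edge's frozen vertices at each stage precisely to keep $B$ of full rank, checked via Lemmas~\ref{L:gluerank}--\ref{L:fullrank}. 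The full-rank bookkeeping is therefore a genuine obstacle, not an afterthought, and your assertion that ``full rank is preserved under gluing once it is established for $B(\Delta_l)$'' is not automatic.
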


\noindent  In \cite{FG} Fock-Goncharov considered a generic part of the moduli stack $\left[\mc{A}^m/G\right]$ as a cluster variety for $G=\SL_l$ and $\mc{A}$ the base affine space $G/U$.
The ring of regular functions on $\left[\mc{A}^m/G\right]$ is the same as $\SI_\bl(S_l^m)$ (a priori just as multigraded vector spaces).
However, we believe that the equality and cluster model established in the above theorem are new. 

In the end, we want to mention a ``global" solution in \cite{GHKK} to a geometric version of the second question.
Let $\mc{A}$ be the cluster variety such that $\mc{A}$ {\em with principal coefficients} has {\em enough global monomial} \cite[Definition 0.11]{GHKK}, then under certain convexity condition, 
we have that the tropical points in the corresponding Fock-Goncharov's dual cluster variety parametrize a basis of regular functions on $\mc{A}$.
Note that the algebra of regular functions on $\mc{A}$ is the associated upper cluster algebra.
Moreover, let $\br{\mc{A}}$ be a partial compactification of $\mc{A}$ coming from a set of frozen variables $\bs{e}$.
If each vertex in $\bs{e}$ can be optimized, then the above holds for $\br{\mc{A}}$ as well.
In \cite{GHKK}, authors give some sufficient conditions for having enough of global monomials (Proposition 0.14). It would be interesting to compare our combinatorial condition to theirs, in particular a very general one -- Proposition 0.14.(4).

\subsection*{Outline of the paper}
In Section \ref{S:GUCAQP}, we recall the graded upper cluster algebra following \cite{BFZ,FP,Fs1},
and its ice quiver with potential model following \cite{DWZ1,DWZ2,Fs1}.
In Section \ref{S:ext}, we prove our first main result -- Theorem \ref{T:model_muext}.
We first treat the case when $e$ is frozen in Section \ref{ss:extfr}, and then the case when $e$ is mutable in Section \ref{ss:extmu}.
The approaches to the two cases are quite different.
In Section \ref{S:glue}, we define how to glue ice hive quivers according to a triangulation of a surface mostly following \cite{FG}.
Proposition \ref{P:Dm} is the last statement of Theorem \ref{T:intro2}.
In Section \ref{S:SI}, we first recall the theory of semi-invariant rings of quiver representations, then reprove the main result in \cite{Fs1}. 
Finally, we prove our second main result -- Theorem \ref{T:mtuple}.

%\subsection*{Notations and Conventions}
%Our vectors are exclusively row vectors. All modules are right modules. 

\section{Graded Upper Cluster Algebras and their IQP model} \label{S:GUCAQP}

\subsection{Graded Upper Cluster Algebras} \label{ss:GUCA}
In this paper, we will not consider the upper cluster algebras in full generality as introduced in \cite{BFZ}.
We are only concerned with those which are
(i) is of geometric type, (ii) over a field $k$, and (iii) has skew-symmetric exchange matrices. 
To define such an upper cluster algebra one needs to specify a {\em seed} $(\Delta,\b{x})$ in some ambient field $\mc{F}\supset k$.
Here, $\Delta$ is an ice quiver with no loops or oriented 2-cycles and 
the {\em extended cluster} $\b{x}=\{x_1,x_2,\dots,x_q\}$ is a collection of algebraically independent (over $k$)
elements of $\mc{F}$ attached to each vertex of $\Delta$.

An {\em ice quiver} $\Delta=(\Delta_0,\Delta_1)$ is a quiver, where some vertices in $\Delta_0$ are designated as {\em mutable} while the rest are {\em frozen}.
We denote the set of mutable (resp. frozen) vertices of $\Delta$ by $\Delta_\mu$ (resp. $\Delta_\nu$).
We usually label the vertices of the quiver in such way that the first $p$ vertices are mutable.
If we require no arrows between frozen vertices, then
such a quiver is uniquely determined by its {\em $B$-matrix} $B(\Delta)$.
It is a $p\times q$ matrix given by
$$b_{u,v} = |\text{arrows }u\to v| - |\text{arrows }v \to u|.$$
The elements of $\b{x}$ associated with the mutable vertices are called {\em cluster variables}; they form a {\em cluster}.
The elements associated with the frozen vertices are called {\em frozen variables}, or {\em coefficient variables}.

\begin{definition} \label{D:Qmu}%[\emph{Quiver mutations}]
	Let $u$ be a mutable vertex of $\Delta$.
	The {\em quiver mutation} $\mu_u$ transforms $\Delta$ into the new quiver $\Delta'=\mu_u(\Delta)$ via a sequence of three steps.
	\begin{enumerate}
		\item For each pair of arrows $v\to u\to w$, introduce a new arrow $v\to w$ (unless both $v$ and $w$ are frozen, in which case do nothing);
		\item Reverse the direction of all arrows incident to $u$;
		\item Remove all oriented 2-cycles.
	\end{enumerate}
\end{definition}

\begin{definition} 
	A {\em seed mutation} $\mu_u$ at a (mutable) vertex $u$ transforms $(\Delta,\b{x})$ into the seed $(\Delta',\b{x}')=\mu_u(\Delta,\b{x})$ defined as follows.
	The new quiver is $\Delta'=\mu_u(\Delta)$.
	The new extended cluster is
	$\b{x}'=\b{x}\cup\{x_{u}'\}\setminus\{x_u\}$
	where the new cluster variable $x_u'$ replacing $x_u$ is determined by the {\em exchange relation}
	\begin{equation*} \label{eq:exrel}
	x_u\,x_u' = \prod_{v\rightarrow u} x_v + \prod_{u\rightarrow w} x_w.
	\end{equation*}
\end{definition}

\noindent We note that the mutated seed $(\Delta',\b{x}')$ contains the same
coefficient variables as the original seed $(\Delta,\b{x})$.
It is easy to check that one can recover $(\Delta,\b{x})$
from $(\Delta',\b{x}')$ by performing again a seed mutation  at $u$.
Two seeds $(\Delta,\b{x})$ and $(\Delta^\dag,\b{x}^\dag)$ that can be obtained from each other by a sequence of mutations are called {\em mutation-equivalent}, denoted by $(\Delta,\b{x})\sim (\Delta^\dag,\b{x}^\dag)$.
If $\Delta$ and $\Delta^\dag$ are clear from the context, we may just write $\b{x} \sim \b{x}^\dag$.

Let $\mc{L}(\b{x})$ be the Laurent polynomial algebra in $\b{x}$ over the base field $k$.
If $\Delta$ is an ice quiver, we denote by $\mc{L}_{\Delta}(\b{x})$ the Laurent polynomial in $\b{x}$ which is polynomial in $\b{x}(\Delta_{\nu})$,
that is, $\mc{L}_\Delta(\b{x}):=k\left[\b{x}(\Delta_\mu)^{\pm1},\b{x}(\Delta_\nu)\right]=k\left[x_1^{\pm 1},\dots,x_p^{\pm 1}, x_{p+1}, \dots x_{q}\right]$.

%An amazing property of cluster algebras is
%\begin{theorem}[{Laurent Phenomenon}, \textrm{\cite{FZ1,BFZ}}] \label{T:Laurent}
%	$$\mc{C}(\Delta,\b{x}) \subseteq \bigcap_{(\Delta^\dag,\b{x}^\dag) \sim (\Delta,\b{x})}\mc{L}_{\Delta}(\b{x}^\dag).$$
%\end{theorem}

\begin{definition}[{Upper Cluster Algebra}]
	The {\em upper cluster algebra} (or UCA for short) with seed $(\Delta,\b{x})$ is
	$$\uca(\Delta,\b{x}):=\bigcap_{(\Delta^\dag,\b{x}^\dag) \sim (\Delta,\b{x})}\mc{L}_{\Delta}(\b{x}^\dag).$$
\end{definition}
\noindent Note that our definition of UCA is slightly different from the original one in \cite{BFZ}, where $\mc{L}_\Delta(\b{x}^\dag)$ is replaced by $\mc{L}(\b{x}^\dag)$.
The Laurent Phenomenon \cite{FZ1,BFZ} says that an UCA contains all cluster and coefficient variables.

In general, there may be infinitely many seed mutations equivalent to $(\Delta,\b{x})$.
So the following theorem is very useful to test the membership in a UCA.
Following \cite{BFZ}, let $\b{x}_u := \mu_u(\b{x})$ be the cluster obtained from $\b{x}$ by applying a single mutation at $u$.
We also set $\b{x}_{\circ}:=\b{x}$. We define the {\em upper bound algebra}
$$\mc{U}(\Delta,\b{x}):=\bigcap_{u\in \Delta_\mu\cup \{\circ\}}\mc{L}_{\Delta}(\b{x}_u).$$

\begin{theorem}[\cite{BFZ,GSV2}] \label{T:bounds} 
	Suppose that $B(\Delta)$ has full rank, and $(\Delta,\b{x})\sim (\Delta^\dag,\b{x}^\dag)$,
	then $\mc{U}(\Delta,\b{x})=\mc{U}(\Delta^\dag,\b{x}^\dag)$. In particular, $\mc{U}(\Delta,\b{x})=\uca(\Delta,\b{x})$.
\end{theorem}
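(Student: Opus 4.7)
The plan is to first establish the equality $\mc{U}(\Delta,\b{x}) = \mc{U}(\mu_v(\Delta,\b{x}))$ for a single mutable vertex $v$; since mutation equivalence is generated by single mutations, this promotes to $\mc{U}(\Delta,\b{x}) = \mc{U}(\Delta^\dag,\b{x}^\dag)$ for all mutation-equivalent seeds. The ``in particular'' clause then follows formally: the equality forces $\mc{U}(\Delta,\b{x}) \subseteq \mc{L}_{\Delta^\dag}(\b{x}^\dag)$ for every mutation-equivalent seed $(\Delta^\dag,\b{x}^\dag)$, hence $\mc{U}(\Delta,\b{x}) \subseteq \uca(\Delta,\b{x})$, and the reverse inclusion is immediate since $\uca$ is an intersection over a superset of seeds. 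By the involutivity of $\mu_v$, it will suffice to prove one inclusion, say $\mc{U}(\Delta,\b{x}) \subseteq \mc{U}(\mu_v(\Delta,\b{x}))$.

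So take $f \in \mc{U}(\Delta,\b{x})$; I would check $f \in \mc{L}_{\mu_v(\Delta)}(\mu_u(\mu_v(\b{x})))$ for every $u \in \Delta_\mu \cup \{\circ\}$. The cases $u = \circ$ and $u = v$ collapse directly to the defining conditions on $f$ (using $\mu_v \circ \mu_v = \mathrm{id}$ in the latter case, along with the fact that the frozen variables are unchanged). The interesting case is $u \in \Delta_\mu \setminus \{v\}$, where my plan is to write
$$f \;=\; \frac{P(\b{x})}{x_u^a\,x_v^b}$$
with $P$ polynomial in $\b{x}$ and coprime to $x_u x_v$. The hypotheses $f \in \mc{L}_\Delta(\mu_u(\b{x}))$ and $f \in \mc{L}_\Delta(\mu_v(\b{x}))$ translate, through the exchange relations $x_u x_u' = \theta_u$ and $x_v x_v' = \theta_v$, into divisibility statements on $P$: roughly, $\theta_v^b$ must divide $P$ modulo $(x_v)$, and $\theta_u^a$ must divide $P$ modulo $(x_u)$. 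Substituting the $v$-exchange relation rewrites $f$ as an element of $\mc{L}_{\mu_v(\Delta)}(\mu_v(\b{x}))$; a second substitution via the exchange relation at $u$ in the mutated quiver $\mu_v(\Delta)$ --- whose exchange binomial $\wtd{\theta}_u$ at $u$ differs from $\theta_u$ by the combinatorics of quiver mutation --- should produce the desired expression in $\mu_u(\mu_v(\b{x}))$.

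The main obstacle, and precisely where the full-rank hypothesis is indispensable, is to ensure that the two divisibility conditions on $P$ combine cleanly rather than collide on the intersection locus $\{\theta_u = \theta_v = 0\}$. A priori there is no way to conclude from ``$\theta_v^b \mid P \bmod x_v$'' and ``$\theta_u^a \mid P \bmod x_u$'' that the product $\theta_u^a \theta_v^b$ divides $P$ in $k[\b{x}]$. Full rank of $B(\Delta)$ guarantees that the leading and trailing monomials of $\theta_u$ and $\theta_v$ span linearly independent directions in the character lattice of Laurent monomials, which forces $\theta_u$ and $\theta_v$ to share no common irreducible factor in $k[\b{x}]$; this coprimality is what rules out the pathology and closes the two-step substitution. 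A secondary piece of bookkeeping will be checking that the modification $\theta_u \leadsto \wtd{\theta}_u$ forced by mutation is compatible with the divisibility statements --- this is the combinatorial content encoded in the quiver mutation rule and constitutes the genuine technical heart of the original argument in \cite{BFZ}.
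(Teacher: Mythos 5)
The paper does not supply its own proof of this theorem; it is quoted from \cite{BFZ} (Corollary 1.9) and \cite{GSV2} (Theorem 4.1), with the latter handling the $\mc{L}_\Delta$ variant where frozen variables remain polynomial. Your outline faithfully reconstructs the BFZ argument: reduce to a single mutation $\mu_v$, note that the cases $u=\circ$ and $u=v$ are free, and for $u\neq v$ combine the two memberships $f\in\mc{L}_\Delta(\mu_u(\b{x}))$ and $f\in\mc{L}_\Delta(\mu_v(\b{x}))$ via a coprimality argument on the exchange binomials --- this is exactly the content of BFZ's key two-variable lemma (their Lemma 4.1), with full rank entering through their Proposition 1.8 to guarantee total coprimality.

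Two points of precision worth flagging, since they are load-bearing in the actual argument. First, the divisibility is not ``$\theta_v^b\mid P\bmod(x_v)$'' but the graded statement that if $f=\sum_{r\ge -b}c_r x_v^r$ with $c_r$ independent of $x_v$, then $f\in\mc{L}_\Delta(\mu_v(\b{x}))$ iff $\theta_v^r\mid c_{-r}$ for every $1\le r\le b$; the bottom coefficient alone does not control membership. Second, for coprimality the relevant fact is that, after clearing a monomial unit, $\theta_u$ and $\theta_v$ become $1+\b{x}^{b_u}$ and $1+\b{x}^{b_v}$, and these can share an irreducible factor whenever $b_u$ and $b_v$ are \emph{parallel}, not merely unequal (e.g.\ $1+x$ divides $1+x^3$). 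What full rank of $B(\Delta)$ buys is that no two rows are proportional; that, not distinctness of the leading/trailing monomials per se, is what rules out a common factor. Finally, your closing remark is accurate: the coupling $\theta_u\leadsto\wtd{\theta}_u$ when $u$ and $v$ are adjacent is the genuinely technical part, and it is precisely what BFZ's two-variable lemma is engineered to resolve.
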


\noindent This theorem is originally proved in \cite[Corollary 1.9]{BFZ} for $\mc{U}(\Delta,\b{x})$ and $\uca(\Delta,\b{x})$ with $\mc{L}_\Delta(\b{x})$ replaced by $\mc{L}(\b{x})$. The version we have taken is proven in \cite[Theorem 4.1]{GSV2}.

Any UCA, being a subring of a field, is an integral domain (and under our conventions, a $k$-algebra).
However, it may fail to be Noetherian \cite{Sp}.
Since normality is preserved under localization and intersection, any UCA is normal.
The next lemma is useful to identify a UCA as a subalgebra of some given Noetherian normal domain.

Let $R$ be a finitely generated $k$-algebra.
We call two elements of $R$ {\em coprime in codimension 1} if the locus of their common zeros
has codimension~$\ge 2$ in $\operatorname{Spec}(R)$. % Let $x_u'=\b{x}_u(u)$ as in \eqref{eq:}.
\begin{definition}  \label{D:CR1} We say that a seed $(\Delta,\b{x})$ is {\em CR1} in $R$ if 
\begin{enumerate} \item $\b{x}\subset R$ and each $x_u'\in R$.
\item	each pair of cluster variables in $\b{x}$ and each pair $(x_u,x_u')$ are coprime in codimension 1 in $R$.
\end{enumerate}
\end{definition}

\begin{lemma}[{\cite[Proposition 3.6]{FP}}] \label{L:RCA}
	Let $R$ be a finitely generated $k$-algebra and a normal domain.
	If $(\Delta,\b{x})$ is a CR1 seed in $R$, then $R\supseteq\uca(\Delta,\b{x})$.
\end{lemma}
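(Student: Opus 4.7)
\textbf{Proof proposal for Lemma \ref{L:RCA}.}

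The plan is to reduce the containment $R\supseteq\uca(\Delta,\b{x})$ to a local check at height-one primes, exploiting the fact that a Noetherian normal domain is the intersection of its localizations at such primes. Since $R$ is a finitely generated $k$-algebra, it is Noetherian (Hilbert basis), and by hypothesis normal, so $R=\bigcap_{\mathfrak{p}} R_{\mathfrak{p}}$ where $\mathfrak{p}$ ranges over height-one primes of $R$. Fix $f\in\uca(\Delta,\b{x})$. By definition, $\uca(\Delta,\b{x})\subseteq \mc{L}_\Delta(\b{x})\cap\bigcap_{u\in\Delta_\mu}\mc{L}_\Delta(\b{x}_u)$, where each $\b{x}_u$ is obtained from $\b{x}$ by a single mutation at $u$. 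Since $\b{x}\subset R$ and each $x_u'\in R$ by CR1(i), we can regard these Laurent rings as subrings of $\op{Frac}(R)$; concretely, $f\in R[\b{x}(\Delta_\mu)^{-1}]$ and $f\in R[\b{x}_u(\Delta_\mu)^{-1}]$ for every mutable $u$.

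The key step is a case analysis at a height-one prime $\mathfrak{p}\subset R$. If no mutable cluster variable lies in $\mathfrak{p}$, then every $x_v$ with $v\in\Delta_\mu$ is a unit in $R_{\mathfrak{p}}$, so $f\in R[\b{x}(\Delta_\mu)^{-1}]\subset R_{\mathfrak{p}}$. Otherwise, pick some $x_u\in\mathfrak{p}$ with $u$ mutable. Suppose a second mutable variable $x_v\in\mathfrak{p}$, $v\neq u$: then $V(\mathfrak{p})\subseteq V(x_u)\cap V(x_v)$, but CR1(ii) forces the right-hand side to have codimension $\ge 2$, contradicting $\op{ht}(\mathfrak{p})=1$. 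So $x_u$ is the unique mutable cluster variable in $\mathfrak{p}$. Now use the alternative expression $f\in R[\b{x}_u(\Delta_\mu)^{-1}]$, whose inverted variables are $x_u'$ together with $\{x_v:v\in\Delta_\mu\setminus\{u\}\}$. The latter are units in $R_{\mathfrak{p}}$; and $x_u'\notin\mathfrak{p}$, for otherwise $V(\mathfrak{p})\subseteq V(x_u)\cap V(x_u')$, again of codimension $\ge 2$ by CR1(ii), contradiction. Hence every element inverted in $R[\b{x}_u(\Delta_\mu)^{-1}]$ is a unit in $R_{\mathfrak{p}}$, giving $f\in R_{\mathfrak{p}}$. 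Frozen variables never enter the discussion because $f$ is polynomial in them.

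Combining the two cases yields $f\in R_{\mathfrak{p}}$ for every height-one prime, hence $f\in R$, proving $\uca(\Delta,\b{x})\subseteq R$.

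The only genuine subtlety, and the step I would double-check carefully, is that the inclusion $\uca(\Delta,\b{x})\subseteq \mc{L}_\Delta(\b{x})\cap\bigcap_u\mc{L}_\Delta(\b{x}_u)$ is used without appealing to Theorem \ref{T:bounds}; this is fine because it is immediate from the definition of $\uca$ as an intersection over \emph{all} mutation-equivalent seeds, which includes the one-step neighbors of $(\Delta,\b{x})$. Everything else is a routine application of the ``$R$ normal implies $R=\bigcap_{\op{ht}\mathfrak{p}=1} R_{\mathfrak{p}}$'' principle, with the CR1 condition being precisely what is needed to rule out codimension-one poles of $f$ on $\op{Spec}R$.
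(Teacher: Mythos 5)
Your proof is correct and follows essentially the same approach as the cited source (Fomin--Pylyavskyy, Prop.\ 3.6): since the paper only cites this result without reproducing a proof, the comparison is to that argument, which is the standard localization-at-height-one-primes ``starfish'' reduction using normality of $R$ and the CR1 coprimality to show at most one inverted variable can lie in a given height-one prime, then passing to the adjacent cluster. The case analysis and use of CR1(i) to land in subrings $R[\b{x}_u(\Delta_\mu)^{-1}]$ of $\operatorname{Frac}(R)$ are all in order.
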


\noindent One implication of \cite[Lemma 4.4.2]{MM} is the following lemma.
\begin{lemma} \label{L:CR1} Suppose that $B(\Delta)$ has full rank and $\uca(\Delta,\b{x})$ is Noetherian.
Then $(\Delta,\b{x})$ is a CR1 seed in $\uca(\Delta,\b{x})$.
\end{lemma}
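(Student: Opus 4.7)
The plan is to verify the two defining conditions of the CR1 property from Definition \ref{D:CR1} separately; only the second will require substantive work.

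Clause (1), namely $\b{x}\subset R$ and $x_u' \in R$ for every mutable vertex $u$, is immediate from the Laurent Phenomenon of \cite{FZ1,BFZ}. The variables in $\b{x}$ are by definition the cluster and coefficient variables of the seed itself, hence lie in $R=\uca(\Delta,\b{x})$. Each mutated variable $x_u'$ is a cluster variable of the once-mutated seed $\mu_u(\Delta,\b{x})$, and this seed is mutation-equivalent to $(\Delta,\b{x})$, so its cluster variables also belong to $R$.

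Clause (2), coprimality in codimension 1, is the substantive content. Since $R=\uca(\Delta,\b{x})$ is Noetherian by hypothesis and always a normal domain (being an intersection of Laurent polynomial rings over a field), it is a Krull domain; for such a ring, two elements being coprime in codimension 1 is equivalent to the statement that no height-one prime ideal of $R$ contains both. This is precisely the form in which the claim appears as \cite[Lemma 4.4.2]{MM}, and I would appeal to it directly. Conceptually, the full-rank hypothesis on $B(\Delta)$ forces the $\g$-vectors of distinct cluster variables (including the exchange partners $x_u$ and $x_u'$) to be distinct, from which one extracts that the prime Weil divisors associated to different cluster variables are pairwise distinct. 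For the exchange pair in particular, the relation $x_u x_u' = \prod_{v\to u} x_v + \prod_{u\to w} x_w$, combined with the absence of oriented $2$-cycles in $\Delta$ (which makes the variable supports of the two monomials disjoint), rules out any height-one prime simultaneously containing $x_u$ and $x_u'$.

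The main obstacle, were one to give a self-contained proof instead of citing \cite{MM}, is exactly clause (2); clause (1) is purely formal. The critical role of the two hypotheses is worth emphasizing: the full-rank assumption drives the $\g$-vector separation and the non-degeneracy of the exchange relation needed for the Krull-divisor argument, while the Noetherian assumption is what makes $\Spec R$ a scheme on which codimension in the ordinary sense behaves well. Dropping either hypothesis breaks the argument, so tracking precisely where each is used is the primary thing to get right.
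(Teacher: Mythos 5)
Your proposal matches the paper's own (non-)proof: the paper offers no argument at all, merely prefacing the lemma with ``One implication of \cite[Lemma 4.4.2]{MM} is the following lemma,'' so your appeal to that same result for the coprimality clause is exactly what the authors intend, and your disposal of clause (1) via the Laurent Phenomenon is the standard formal step they leave implicit. One small caution if you ever try to make the heuristic self-contained: disjointness of the supports of the two exchange monomials (no $2$-cycles) does not by itself preclude a height-one prime containing both $x_u$ and $x_u'$, since such a prime could also pick up one variable from each monomial; the actual argument in \cite{MM} needs more, so it is right to cite it rather than rely on that sketch.
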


\begin{remark} The CR1 and Noetherian conditions are crucial to many proofs later. 
	In view of the above lemma, the CR1 condition for a seed in a UCA seems weaker than the full rank condition of $B$.
	However, the full rank condition is more useful because mutations preserve the rank of $B$ \cite[Lemma 3.2]{BFZ}.
\end{remark}

\noindent A related but more trivial fact is that
\begin{lemma} \label{L:CR1coef} Let $e$ be a frozen vertex of $\Delta$.
	Then for any $w\in \uca(\Delta,\b{x})$ such that $x_e \not\mid w$ in $\mc{L}_{\Delta}(\b{x})$, $(w,x_e)$ is a regular sequence in $\uca(\Delta,\b{x})$.
%	In particular, it generates a codimension 2 ideal.	
\end{lemma}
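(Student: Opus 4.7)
The content of the lemma is that $x_e$ is a nonzerodivisor on $\uca(\Delta,\b{x})/(w)$; that $w$ itself is a nonzerodivisor is automatic since $\uca(\Delta,\b{x})$ is a domain. The plan is: assume $x_e a = wb$ with $a, b \in \uca(\Delta,\b{x})$, and produce $c \in \uca(\Delta,\b{x})$ with $a = wc$. Inside the UFD $\mc{L}_\Delta(\b{x})$, the element $x_e$ is a polynomial generator, hence prime; combined with $x_e \nmid w$, the equation $x_e a = wb$ forces $x_e \mid b$, so one can write $b = x_e c$ with $c \in \mc{L}_\Delta(\b{x})$ and $a = wc$. What remains is to verify $c \in \uca(\Delta,\b{x}) = \bigcap_{\dag} \mc{L}_\Delta(\b{x}^\dag)$.

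Since frozen variables are unchanged by mutation, $x_e$ is a prime polynomial generator of $\mc{L}_\Delta(\b{x}^\dag)$ for every mutation-equivalent seed $\dag$. Thus $c = b/x_e \in \mc{L}_\Delta(\b{x}^\dag)$ is equivalent to $v_e^\dag(b) \geq 1$, where $v_e^\dag$ denotes the $x_e$-adic valuation coming from the UFD $\mc{L}_\Delta(\b{x}^\dag)$. Since $v_e^\circ(b) \geq 1$ holds by construction, the matter reduces to mutation-invariance of the $x_e$-adic valuation on the ambient field $\mc{F}$. The plan for this is induction on mutation distance, verifying $v_e^\circ(x_i^\dag) = 0$ for every mutable cluster variable $x_i^\dag$ of every seed. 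In the inductive step, the exchange relation $x_u^\dag(x_u^\dag)' = M_1 + M_2$ combined with the no-oriented-2-cycles condition on $\Delta$ ensures that $x_e$ divides at most one of the monomials $M_1, M_2$ in $\mc{L}_\Delta(\b{x})$, so in the generic case the unequal valuations give $v_e^\circ(M_1+M_2)=0$. The delicate subcase is when neither monomial involves $x_e$: here the positivity of cluster variables (Lee--Schiffler, Gross--Hacking--Keel--Kontsevich) implies that the specializations $\overline{M_1}, \overline{M_2}$ in the integral domain $\mc{L}_\Delta(\b{x})/(x_e)$ are nonzero Laurent polynomials with non-negative integer coefficients, so their sum is nonzero and the induction closes.

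The hard part will be this invariance result, particularly the subcase requiring positivity (or some equivalent combinatorial input); the surrounding framework is a direct UFD calculation. Once the invariance is established, $v_e^\dag(b) \geq 1$ follows for every $\dag$, so $c \in \mc{L}_\Delta(\b{x}^\dag)$ for every $\dag$, giving $c \in \uca(\Delta,\b{x})$ and hence $a = wc \in w\cdot\uca(\Delta,\b{x})$, as needed.
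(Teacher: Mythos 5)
Your overall plan --- reduce the lemma to showing that $x_e$ divides $b$ in every Laurent ring $\mc{L}_\Delta(\b{x}^\dag)$, equivalently that an $x_e$-adic valuation is mutation invariant --- matches the paper's, and your reduction to that statement is correct. But the invariance argument you sketch is both heavier and incomplete. The paper inducts on the mutation word and compares \emph{adjacent} seeds: from $x_e\mid z$ in $\mc{L}_\Delta(\b{x}_k)$ it deduces $x_e\mid z$ in $\mc{L}_\Delta(\b{x}_{k+1})$, using only that the exchange binomial $\rho$ is a literal two-term Laurent polynomial in the independent variables of $\b{x}_{k+1}$, with at most one term divisible by $x_e$ because $\Delta$ has no $2$-cycles. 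In the ring $\mc{L}_\Delta(\b{x}_{k+1})$ there is nothing to cancel, so $x_e\nmid\rho$ is a purely combinatorial fact and positivity never enters. Your decision to push everything back to the initial seed is precisely what creates the cancellation subcase that you then try to repair with Lee--Schiffler/GHKK positivity; the adjacent comparison avoids it entirely.

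More seriously, your reduction of the invariance $v_e^\dag=v_e^\circ$ to the statement ``$v_e^\circ(x_i^\dag)=0$ for every cluster variable'' has a genuine gap. Two discrete valuations on $\mc{F}$ that agree on a generating set of a subring need not agree on the subring: even granting $v_e^\circ(x_i^\dag)=0$ for all $i$, a Laurent polynomial $c_0$ in $\b{x}^\dag\setminus\{x_e\}$ could still have $v_e^\circ(c_0)>0$ if the specializations $\overline{x_i^\dag}$ at $x_e=0$ satisfy an algebraic relation, and positivity does not exclude this --- positivity constrains the coefficients of cluster variables, not those of $b$, which is an arbitrary element of $\uca(\Delta,\b{x})$. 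So as written you cannot yet pass from $v_e^\circ(b)\ge 1$ to $v_e^\dag(b)\ge 1$. Closing the gap amounts to proving that the $x_e$-adic valuations associated to adjacent seeds coincide on $\mc{F}$, which is what the paper establishes directly; once you are forced into that comparison, the positivity detour drops out as well.
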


\begin{proof} We first show the following claim.
	Suppose that $z\in \uca(\Delta,\b{x})$ and $zw/x_e\in \mc{L}_\Delta(\b{x})$, then $z/x_e\in \uca(\Delta,\b{x})$.
	It suffices to show that $z/x_e$ is polynomial in $x_e$ when written as a Laurent polynomial in any $\b{x}^\dagger {\sim} \b{x}$. 
	This is equivalent to that $z$ has $x_e$ as a factor in $\mc{L}_\Delta(\b{x}^\dagger)$. Suppose that $\b{x}^\dagger=\bs{\mu}(\b{x})$.
	We will prove the claim by induction on the length of $\bs{\mu}=\mu_{u_n}\cdots\mu_{u_2}\mu_{u_1}$.
	Since $x_e \not\mid w$ and $zw/x_e\in \mc{L}_\Delta(\b{x})$, $z$ has $x_e$ as a factor in $\mc{L}_\Delta(\b{x})$.
	Let $\b{x}_k = \mu_{u_k}\cdots\mu_{u_2}\mu_{u_1}(\b{x})$.
	Suppose that $z$ has $x_e$ as a factor in $\mc{L}_\Delta(\b{x}_k)$. 
	Now according to the exchange relation we substitute $\b{x}_k(u_{k+1})$ by $\rho/\b{x}_{k+1}(u_{k+1})\in \mc{L}_\Delta(\b{x}_{k+1})$.	
    Since $x_e$ is not a factor of $\rho/\b{x}_{k+1}(u_{k+1})$ in $\mc{L}_\Delta(\b{x}_{k+1})$,
	we conclude that $z$ also has $x_e$ as a factor in $\mc{L}_\Delta(\b{x}_{k+1})$.

	To show $(w,x_e)$ is a regular sequence, we need to show that $x_e$ is not a zero-divisor in $A/(w)$.
	Suppose that $x_ey = wz$ for some $y,z\in A$ (i.e., $x_ey\in (w)$), then $y=wz/x_e\in A\subset \mc{L}_\Delta(\b{x})$. 
	By the claim just proved, we have that $z/x_e \in A$ so that $y\in (w)$. Hence, $x_e$ is not a zero-divisor in $A/(w)$.
\end{proof}

Let $\bs{e}$ be a subset of $\Delta_0$. We write $\b{x}(\bs{e})$ for the set $\{x_e\}_{e\in\bs{e}}$.
If $\b{x}(\bs{e})$ is contained in a ring $R$, then we write $R_{\b{x}(\bs{e})}$ for the localization of $R$ at the product $\prod_{e\in\bs{e}}x_e$.
Let $\Delta^{\bs{e}}$ be the ice quiver obtained from $\Delta$ by freezing every vertex in $\bs{e}$.
\begin{lemma}[{\cite[Lemma 2.4]{Fs2}}] \label{L:reduce2frozen} Let $\bs{e}$ be a subset of $\Delta_\mu$.
	Suppose that $(\Delta,\b{x})$ is a CR1 seed in $\uca(\Delta,\b{x})$, which is Noetherian.
	Then $\uca(\Delta,\b{x})_{{\b{x}}(\bs{e})}=\uca(\Delta^{\bs{e}},\b{x})_{{\b{x}}(\bs{e})}$.
\end{lemma}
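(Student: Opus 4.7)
The plan is to prove each inclusion separately by producing an appropriate CR1 seed inside the ambient localized algebra and invoking Lemma~\ref{L:RCA}. Write $A = \uca(\Delta,\b{x})$ and $A^{\bs{e}} = \uca(\Delta^{\bs{e}},\b{x})$.

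For the inclusion $A^{\bs{e}}_{\b{x}(\bs{e})} \subseteq A_{\b{x}(\bs{e})}$, I would take $R := A_{\b{x}(\bs{e})}$ and verify that $(\Delta^{\bs{e}},\b{x})$ is a CR1 seed in $R$. The initial cluster sits in $A \subseteq R$; for each $u \in \Delta_\mu \setminus \bs{e}$ the exchange relation at $u$ in $\Delta^{\bs{e}}$ agrees with the one in $\Delta$, because freezing the vertices of $\bs{e}$ only suppresses arrows between frozen vertices of $\Delta^{\bs{e}}$ and such arrows do not enter any exchange monomial at $u$. Hence $x_u' \in A \subseteq R$. Coprimality in codimension one of the relevant pairs is inherited from the CR1 hypothesis on $(\Delta,\b{x})$ in $A$, since localization only shrinks the zero loci. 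Normality and Noetherianity pass from $A$ to $R$, so Lemma~\ref{L:RCA} gives $R \supseteq A^{\bs{e}}$, and localizing once more at $\b{x}(\bs{e})$ produces the claimed inclusion.

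For the reverse inclusion $A_{\b{x}(\bs{e})} \subseteq A^{\bs{e}}_{\b{x}(\bs{e})}$, I would set $R' := A^{\bs{e}}_{\b{x}(\bs{e})}$ and check that $(\Delta,\b{x})$ is CR1 in $R'$. The only new ingredient is the mutated variable $x_u'$ for $u \in \bs{e}$, which is not a priori in $A^{\bs{e}}$: solving the exchange relation $x_u x_u' = M_u^+ + M_u^-$ gives $x_u' = (M_u^+ + M_u^-)/x_u$, and since $x_u$ is a unit in $R'$ while the numerator is a polynomial in $\b{x} \subseteq A^{\bs{e}}$, we obtain $x_u' \in R'$. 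Coprimality of $(x_u,x_u')$ for $u \in \bs{e}$ in $R'$ is automatic because $x_u$ is a unit; the remaining coprimality conditions reduce to those of the CR1 seed $(\Delta^{\bs{e}},\b{x})$ inside $A^{\bs{e}}$, with the pairs involving frozen vertices of $\bs{e}$ handled via Lemma~\ref{L:CR1coef}. Lemma~\ref{L:RCA} then yields $R' \supseteq A$, hence the desired inclusion after one further localization.

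The main obstacle is the careful bookkeeping across the two different ice quiver structures, particularly showing that the mutation relations at $u \in \bs{e}$ contribute nothing essentially new once $\b{x}(\bs{e})$ is inverted; heuristically, inverting $\b{x}(\bs{e})$ precisely cancels the difference between treating $\bs{e}$ as mutable or as frozen, and the above arguments make this heuristic rigorous. A secondary technical point is checking that $R$ and $R'$ satisfy the hypotheses of Lemma~\ref{L:RCA}; normality and Noetherianity are preserved under localization, and the finite generation needed is inherited from the ambient UCAs in the concrete setups considered in this paper.
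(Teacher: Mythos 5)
Your first inclusion, $\uca(\Delta^{\bs{e}},\b{x})_{\b{x}(\bs{e})} \subseteq \uca(\Delta,\b{x})_{\b{x}(\bs{e})}$, is sound: the ring $R = \uca(\Delta,\b{x})_{\b{x}(\bs{e})}$ is a localization of a finitely generated normal Noetherian domain (hence again one), the exchange relations at $u \notin \bs{e}$ are identical in $\Delta$ and $\Delta^{\bs{e}}$, and coprimality in codimension one is preserved when passing to the open subscheme $\Spec R \subseteq \Spec \uca(\Delta,\b{x})$. Lemma~\ref{L:RCA} then gives $\uca(\Delta^{\bs{e}},\b{x}) \subseteq R$, and one further localization closes this half.

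The reverse inclusion, however, has two genuine gaps, both stemming from the fact that $R' := \uca(\Delta^{\bs{e}},\b{x})_{\b{x}(\bs{e})}$ is \emph{not} a localization of $\uca(\Delta,\b{x})$ but, at this stage, merely a subring of it. First, Lemma~\ref{L:RCA} requires $R'$ to be a finitely generated $k$-algebra. You have no control on $\uca(\Delta^{\bs{e}},\b{x})$: the hypothesis is Noetherianity of $\uca(\Delta,\b{x})$, not of $\uca(\Delta^{\bs{e}},\b{x})$, and a subalgebra of a Noetherian (or finitely generated) $k$-algebra need not itself be Noetherian or finitely generated (the paper even cites~\cite{Sp} to remind us that upper cluster algebras can fail to be Noetherian). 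Your closing remark that ``the finite generation needed is inherited from the ambient UCAs in the concrete setups considered in this paper'' is an appeal to examples, not a proof of the general lemma. Second, and independently, the coprimality part of CR1 must be verified \emph{in} $R'$. The hypothesis gives coprimality of the relevant pairs inside $\uca(\Delta,\b{x})$, and by localization inside $\uca(\Delta,\b{x})_{\b{x}(\bs{e})}$; but $R'$ is a proper subring of the latter with the same fraction field, and coprimality in codimension one does not descend from a domain to a subdomain in general (the map $\Spec \uca(\Delta,\b{x})_{\b{x}(\bs{e})} \to \Spec R'$ is only dominant, and a height-one prime of $R'$ containing both elements need not lift to one upstairs). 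Your proposal also slips in the assertion that $(\Delta^{\bs{e}},\b{x})$ is a CR1 seed ``inside $A^{\bs{e}}$,'' which is not among the hypotheses. A robust route around both problems is to avoid $R'$ entirely: for example, work with upper bounds. Writing $\mc{U}(\Delta,\b{x})$ and $\mc{U}(\Delta^{\bs{e}},\b{x})$ as \emph{finite} intersections of Laurent rings, localization does commute with those intersections, and one checks directly that $\mc{L}_\Delta(\b{x}_e)_{x_e} = \mc{L}_\Delta(\b{x})_{\rho_e} \supseteq \mc{L}_\Delta(\b{x})$ for $e\in\bs{e}$, so the extra factors in $\mc{U}(\Delta,\b{x})_{\b{x}(\bs{e})}$ impose no additional conditions; one then needs a statement (in the spirit of \cite{BFZ} and the cited \cite[Lemma~4.4.2]{MM}) identifying the upper bound with the upper cluster algebra under the stated hypotheses. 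As written, your second inclusion is not proved.
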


For a vector $\g\in \mb{Z}^{\Delta_0}$, we write $\b{x}^\g$ for the monomial $\prod_{v\in \Delta_0} x_v^{\g(v)}$.
For $u\in\Delta_\mu$, we set ${y}_u= \b{x}^{-b_{u}}$ where $b_u$ is the $u$-th row of the matrix $B(\Delta)$,
and let ${\b{y}}=\{{y}_u\}_{u\in\Delta_\mu}$.

Suppose that an element $z\in\uca(\Delta)$ can be written as
\begin{equation}\label{eq:z} z = \b{x}^{\g(z)} F(\b{y}),
\end{equation}
where $F$ is a rational polynomial with a constant term, and $\g(z)\in \mb{Z}^{\Delta_0}$.
If we assume that the matrix $B(\Delta)$ has full rank,
then the elements in $\{{y}_u\}_{u\in\Delta_\mu}$ are algebraically independent so that the vector $\g(z)$ is uniquely determined \cite{FZ4}.
We call the vector~$\g(z)$~the (extended) {\em $\g$-vector} of $z$.
Definition implies at once that for two such elements $z_1,z_2$ we have that
$\g(z_1z_2) = \g(z_1) + \g(z_2)$.
So the set $G(\Delta)$ of all $\g$-vectors in $\uca(\Delta)$ forms a sub-semigroup of $\mb{Z}^{\Delta_0}$.

\begin{lemma}[{\cite[Lemma 5.5]{Fs1}, {\em cf.} \cite{P}}] \label{L:independent} Assume that the matrix $B(\Delta)$ has full rank.
	Let $Z=\{z_1,z_2,\dots,z_k\}$ be a subset of $\uca(\Delta)$ with distinct well-defined $\g$-vectors.
	Then $Z$ is linearly independent over $k$.
\end{lemma}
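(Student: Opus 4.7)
The plan is to prove the contrapositive: suppose a nontrivial relation $\sum_i c_i z_i = 0$ with $c_i \in k$ not all zero, and derive a contradiction by comparing the Laurent expansions of the $z_i$ in the initial cluster $\b{x}$. Writing each $z_i = \b{x}^{\g(z_i)} F_i(\b{y})$ and expanding $F_i(\b{y}) = \sum_\nu f_{i,\nu}\,\b{y}^\nu$ over $\nu \in \mb{Z}_{\geq 0}^{\Delta_\mu}$ with nonzero constant term $f_{i,0}$, and recalling that $y_u = \b{x}^{-b_u}$, each $z_i$ becomes the Laurent expression $\sum_\nu f_{i,\nu}\,\b{x}^{\g(z_i) - \sum_u \nu_u b_u}$ in $\mc{L}(\b{x})$.

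The key construction will be a linear functional $\phi \colon \mb{R}^{\Delta_0} \to \mb{R}$ satisfying $\phi(b_u) > 0$ for every $u \in \Delta_\mu$. Such a $\phi$ exists because the full-rank hypothesis on $B(\Delta)$ forces the rows $\{b_u\}_{u \in \Delta_\mu}$ to be linearly independent in $\mb{R}^{\Delta_0}$, so the cone they generate is strictly convex (pointed) and can be separated from the origin by a hyperplane. With $\phi$ in hand, one has $\phi(\g(z_i) - \sum_u \nu_u b_u) = \phi(\g(z_i)) - \sum_u \nu_u\,\phi(b_u) \leq \phi(\g(z_i))$, with equality if and only if $\nu = 0$. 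So in the Laurent expansion of each $z_i$, the monomial $\b{x}^{\g(z_i)}$ is the \emph{unique} one whose exponent attains the maximal $\phi$-value, and it occurs with nonzero coefficient $f_{i,0}$.

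To conclude, pick $i_0$ with $c_{i_0} \neq 0$ maximizing $\phi(\g(z_{i_0}))$. The monomial $\b{x}^{\g(z_{i_0})}$ can appear in the expansion of $z_j$ (with $c_j \neq 0$) only if $\g(z_j) - \sum_u \nu_u b_u = \g(z_{i_0})$ for some $\nu \in \mb{Z}_{\geq 0}^{\Delta_\mu}$: the case $\nu = 0$ forces $j = i_0$ by distinctness of the $\g$-vectors, while $\nu \neq 0$ forces $\phi(\g(z_j)) > \phi(\g(z_{i_0}))$, contradicting the choice of $i_0$. Hence the coefficient of $\b{x}^{\g(z_{i_0})}$ in $\sum_i c_i z_i$ equals $c_{i_0} f_{i_0,0} \neq 0$, contradicting vanishing. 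The only genuinely delicate step is producing the separating functional $\phi$, which is precisely where the full-rank hypothesis on $B(\Delta)$ is used; once it is in hand the argument reduces to a straightforward leading-term comparison, and the algebraic independence of the $y_u$ (again guaranteed by full rank) ensures that the coefficients $f_{i,\nu}$ in the $\b{y}$-expansion, hence the leading monomials $\b{x}^{\g(z_i)}$, are unambiguously defined.
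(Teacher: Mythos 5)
Your proof is correct and follows essentially the same route the paper sketches in its remark after the lemma: use the full-rank hypothesis on $B(\Delta)$ to produce a grading on $\{x_v\}$ (equivalently a linear functional $\phi$ on the exponent lattice) under which each $y_u$ has strictly negative degree, so that $\b{x}^{\g(z_i)}$ is the unique leading monomial of $z_i$, and then compare leading terms. The only stylistic remark is that your existence argument for $\phi$ via pointedness of the cone is correct but slightly roundabout; since the rows $\{b_u\}_{u \in \Delta_\mu}$ are linearly independent one can simply extend them to a basis and take $\phi$ to be the sum of the corresponding dual basis functionals, giving $\phi(b_u)=1$ for all $u$.
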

\noindent The proof of the above lemma uses an easy degree argument, which is useful for us later.
The full rank condition on $B$ implies that we can assign a grading
to $\{x_v\}_{v\in \Delta_0}$ such that each $y_u$ has negative total degree. Then the total degree
of the monomial $\b{x}^\g$ is maximal among the total degree of all monomials in \eqref{eq:z}.

\begin{definition}
We say that $\uca(\Delta)$ is {\em g-indexed} if
it has a basis whose elements have well-defined distinct $\g$-vectors in $G(\Delta)$.
\end{definition}

\begin{definition}[\cite{Fs1}] \label{D:wtconfig} A {\em weight configuration} $\bs{\sigma}$ of $\mb{Z}^n$ on an ice quiver $\Delta$ is an assignment for each vertex $v$ of $\Delta$ a (weight) vector $\bs{\sigma}(v)\in \mb{Z}^n$ such that for each mutable vertex $u$, we have that
	\begin{equation} \label{eq:weightconfig}
	\sum_{v\to u} \bs{\sigma}(v) = \sum_{u\to w} \bs{\sigma}(w).
	\end{equation}
	The {\em mutation} $\mu_u$ also transforms $\bs{\sigma}$ into a weight configuration $\bs{\sigma}'$ on the mutated quiver $\mu_u(\Delta)$ defined as
	\begin{equation} \label{eq:mu_wt}
	\bs{\sigma}'(v) = \begin{cases} \displaystyle \sum_{u\to w} \bs{\sigma}(w) - \bs{\sigma}(u) & \text{if } v=u, \\ \bs{\sigma}(v) & \text{if } v\neq u. \end{cases}\end{equation}
\end{definition}

\noindent We usually write such a weight configuration as a pair $(\Delta;\bs{\sigma})$ and its mutation as $\mu_u(\Delta;\bs{\sigma})$.
By slight abuse of notation, we can view $\bs{\sigma}$ as a matrix whose $v$-th row is the weight vector $\bs{\sigma}(v)$.
In this matrix notation, the condition \eqref{eq:weightconfig} is equivalent to $B\bs{\sigma}$ being a zero matrix.
%For any weight configuration of $\Delta$, the mutation can be iterated.

Given a weight configuration $(\Delta;\bs{\sigma})$,
we can assign a multidegree (or weight) to the UCA $\uca(\Delta,\b{x})$ by setting
$\deg(x_v)=\bs{\sigma}(v)$ for $v\in\Delta_0$.
Then mutation preserves multihomogeneity.
We say that this UCA is $\bs{\sigma}$-graded, and denote it by $\uca(\Delta,\b{x};\bs{\sigma})$.
%We refer to $(\Delta,\b{x};\bs{\sigma})$ as a graded seed.
Note that the variables in $\b{y}$ have zero degrees.
So if $z$ has a well-defined $\g$-vector as in \eqref{eq:z}, then $z$ is homogeneous of degree $\g\bs{\sigma}$.

%\begin{definition} \label{D:full}
%%A weight configuration $\bs{\sigma}$ is called {\em full} if the null rank of $B^{\T}$ is equal to the rank of $\bs{\sigma}$.
%The null space of $B^{\T}$ is called the {\em grading space} of the UCA $\uca(\Delta)$.
%\end{definition}

\subsection{The Quiver with Potential Model} \label{ss:QP}
In \cite{DWZ1} and \cite{DWZ2}, the mutation of quivers with potentials is invented to model the cluster algebras.
%We emphasis here that ice quivers considered here can have arrows between frozen vertices.
Following \cite{DWZ1}, we define a potential $W$ on an ice quiver $\Delta$ as a (possibly infinite) linear combination of oriented cycles in $\Delta$.
More precisely, a {\em potential} is an element of the {\em trace space} $\Tr(\ckQ):=\ckQ/[\ckQ,\ckQ]$,
where $\ckQ$ is the completion of the path algebra $k\Delta$ and $[\ckQ,\ckQ]$ is the closure of the commutator subspace of $\ckQ$.
The pair $(\Delta,W)$ is an {\em ice quiver with potential}, or IQP for short.
For each arrow $a\in \Delta_1$, the {\em cyclic derivative} $\partial_a$ on $\widehat{k\Delta}$ is defined to be the linear extension of
$$\partial_a(a_1\cdots a_d)=\sum_{k=1}^{d}a^*(a_k)a_{k+1}\cdots a_da_1\cdots a_{k-1}.$$
For each potential $W$, its {\em Jacobian ideal} $\partial W$ is the (closed two-sided) ideal in $\ckQ$ generated by all $\partial_a W$.
The {\em Jacobian algebra} $J(\Delta,W)$ is the quotient algebra $\widehat{k\Delta}/\partial W$.
If $W$ is polynomial and $J(\Delta,W)$ is finite-dimensional, then the completion is unnecessary to define $J(\Delta,W)$.
This is the situation assumed throughout the paper.

The key notion introduced in \cite{DWZ1,DWZ2} is the {\em mutation} of quivers with potentials and their decorated representations.
Since we do not need the mutation in an explicit way, we refer readers to the original text.
Unlike the quiver mutation, the mutation of IQP is not always defined for any sequence of (mutable) vertices because 2-cycles may be created along the way.
A sequence of vertices is call {\em admissible} for an IQP if its mutation along this sequence is defined. 
In this case the mutation of IQP in certain sense ``lifts" the quiver mutation.
If all sequences are admissible for $(\Delta,W)$, then we call $(\Delta,W)$ {\em nondegenerate}.
\begin{definition}[\cite{DWZ1}] A potential $W$ is called {\em rigid} on a quiver $\Delta$ if
every potential on $\Delta$ is cyclically equivalent to an element in the Jacobian ideal $\partial W$.
Such a QP $(\Delta,W)$ is also called {\em rigid}.
\end{definition}
\noindent It is known \cite[Proposition 8.1, Corollary 6.11]{DWZ1} that every rigid QP is $2$-acyclic, and the rigidity is preserved under mutations. In particular, any rigid QP is nondegenerate.

\begin{definition} A {\em decorated representation} of a Jacobian algebra $J:=J(\Delta,W)$ is a pair $\mc{M}=(M,M^+)$,
	where $M$ is a finite-dimensional $J$-module and $M^+$ is a finite-dimensional $k^{\Delta_0}$-module.
%	By abuse of language, we also say that $\mc{M}$ is a representation of $(\Delta,W)$.
\end{definition}

Let $\mc{R}ep(J)$ be the set of decorated representations of $J(\Delta,W)$ up to isomorphism. 
Let $K^b(\proj J)$ be the homotopy category of bounded complexes of projective representations of $J$,
and $K^2(\proj J)$ be the subcategory of 2-term complexes in fixed degrees (say $-1$ and $0$).
There is a bijection between two additive categories $\mc{R}ep(J)$ and $K^2(\proj J)$ mapping any representation $M$ to its minimal presentation in $\Rep(J)$, and the simple representation $S_u^+$ of $k^{\Delta_0}$ to $P_u\to 0$.
We use the notation $P(\beta)$ for $\bigoplus_{v\in \Delta_0} \beta(v) P_v$, 
where $\beta\in \mb{Z}_{\geqslant 0}^{\Delta_0}$ and $P_v$ is the indecomposable projective representation corresponding to the vertex $v$.
The {\em weight vector} of a projective presentation $P(\beta_1)\to P(\beta_0)$ is equal to $\beta_1-\beta_0$.

\begin{definition} The {\em $\g$-vector} $\g(\mc{M})$ of a decorated representation $\mc{M}$ is the {\em weight vector} of its image in $K^2(\proj J)$.
\end{definition}

%It turns out the right category to look at for an IQP is the category of $\mu$-supported representations in $\mc{R}ep(J)$.

%\begin{remark} Arrows between frozen vertices do not play any role in view of the cluster character (Definition \ref{D:CC}) if we only look at the category of $\mu$-supported representations.
%\end{remark}

%\subsection{The Cluster Character}
%From now on, we assume that $(\Delta,W)$ is a nondegenerate IQP.

\begin{definition}[\cite{DF}]
	To any $\g\in\mathbb{Z}^{\Delta_0}$ we associate the {\em reduced} presentation space $$\PHom_J(\g):=\Hom_J(P([\g]_+),P([-\g]_+)),$$
	where $[\g]_+$ is the vector satisfying $[\g]_+(u) = \max(\g(u),0)$.
	We denote by $\Coker(\g)$ the cokernel of a general presentation in $\PHom_J(\g)$.
\end{definition}
\noindent Reader should be aware that $\Coker(\g)$ is just a notation rather than a specific representation.
If we write $M=\Coker(\g)$, this simply means that we take a presentation general enough (according to context) in $\PHom_J(\g)$, 
then let $M$ to be its cokernel.

\begin{definition}[{\cite{Fs1}}] \label{D:mu_supg}
	A representation is called {\em $\mu$-supported} if its supporting vertices are all mutable.
	A weight vector $\g\in K_0(\proj J)$ is called {\em $\mu$-supported} if $\Coker(\g)$ is $\mu$-supported.
	Let $G(\Delta,W)$ be the set of all $\mu$-supported vectors in $K_0(\proj J)$.
\end{definition}
%\noindent It turns out that for a large class of IQP the set $G(\Delta,W)$ are given by lattice points in some rational polyhedral cone.

\begin{definition}[\cite{P}]
	We define the {\em generic character} $C_W:G(\Delta,W)\to \mb{Z}(\b{x})$~by
	\begin{equation} \label{eq:genCC}
	C_W(\g)=\b{x}^{\g} \sum_{\e} \chi\big(\Gr^{\e}(\Coker(\g)) \big) {\b{y}}^{\e},
	\end{equation}
	where $\Gr^{\e}(M)$ is the variety parameterizing $\e$-dimensional quotient representations of $M$, and $\chi(-)$ denotes the topological Euler-characteristic.
\end{definition}
\noindent It is known \cite[Lemma 5.3]{Fs1} that $C_W(\g)$ is an element in $\uca(\Delta)$.
Note that $C_W(\g)$ has a well-defined $\g$-vector $\g$, so we have that $G(\Delta,W)\subseteq G(\Delta)$.

\begin{theorem}[{\cite[Corollary 5.14]{Fs1}, {\em cf.} \cite[Theorem 1.1]{P}}] \label{T:GCC} Suppose that IQP $(\Delta,W)$ is nondegenerate and $B(\Delta)$ has full rank.
	The generic character $C_W$ maps $G(\Delta,W)$ (bijectively) to a set of linearly independent elements in $\br{\mc{C}}(\Delta)$ containing all cluster monomials.
\end{theorem}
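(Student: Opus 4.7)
The plan is to handle three independent claims in the theorem: (i) the image of $C_W$ lies in $\uca(\Delta)$, (ii) the map is injective with linearly independent image, and (iii) every cluster monomial appears in this image. Claim (i) is the content of \cite[Lemma~5.3]{Fs1}, already cited just before the theorem statement, so I would quote it without re-proof.

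Claim (ii) is nearly immediate. The $\e=0$ term of \eqref{eq:genCC} contributes $\chi(\Gr^{0}(\Coker(\g)))=1$, since the only $0$-dimensional quotient is the zero module. Hence
\[
C_W(\g)=\b{x}^{\g}\,F_{\g}(\b{y}),\qquad F_{\g}(\b{y})=1+\sum_{\e\neq 0}\chi\bigl(\Gr^{\e}(\Coker(\g))\bigr)\b{y}^{\e},
\]
which has exactly the shape of \eqref{eq:z}. Because $B(\Delta)$ has full rank the $y_u$ are algebraically independent, so the $\g$-vector of $C_W(\g)$ is unambiguously $\g$ itself. Thus $C_W$ is injective on $G(\Delta,W)$, and Lemma~\ref{L:independent} then upgrades injectivity to linear independence of the image in $\uca(\Delta)$.

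Claim (iii) carries the real content. Fix a cluster monomial $m=\prod_{v\in\Delta_0}(x^{\dagger}_v)^{a_v}$ in a seed mutation-equivalent to $(\Delta,\b{x})$. I would invoke the Derksen--Weyman--Zelevinsky categorification \cite{DWZ2}: iterated mutation of decorated representations starting from the trivial seed produces a rigid decorated representation $\mc{M}_m$ whose $\g$-vector $\g(\mc{M}_m)$ is the Nakanishi--Zelevinsky $\g$-vector of $m$ and is $\mu$-supported, so that $\g(\mc{M}_m)\in G(\Delta,W)$. The DWZ $F$-polynomial formula then gives
\[
m=\b{x}^{\g(\mc{M}_m)}\,F_{M_m}(\b{y}),\qquad F_{M_m}(\b{y})=\sum_{\e}\chi\bigl(\Gr^{\e}(M_m)\bigr)\b{y}^{\e},
\]
so the claim reduces to the identity $F_{M_m}=F_{\g(\mc{M}_m)}$, i.e.\ to matching the quiver Grassmannian Euler characteristics of the rigid module $M_m$ with those of the generic cokernel $\Coker(\g(\mc{M}_m))$.

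The main obstacle is precisely this last matching step. Rigidity of $M_m$ realises its isomorphism class as an open subset of the presentation space $\PHom_J(\g(\mc{M}_m))$, and the task is to show that $\chi(\Gr^{\e}(-))$ is constant on this open dense stratum, so that the generic cokernel inherits the Euler characteristics of $M_m$. My plan is to combine openness of the rigid stratum in $\PHom_J(\g(\mc{M}_m))$ with constructibility of the quiver Grassmannian functor in flat families of presentations; alternatively, Plamondon's genericity theorem for $2$-Calabi--Yau categorifications supplies the same conclusion in one shot. Once this technical point is settled, (iii) is immediate from the DWZ formula, and the theorem follows by combining (i)--(iii).
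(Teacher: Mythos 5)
This theorem is cited in the paper as \cite[Corollary~5.14]{Fs1} (cf.\ \cite[Theorem~1.1]{P}) and is not re-proven here, so there is no ``paper's own proof'' to compare against. Judged on its own terms, your sketch follows the standard literature argument and is essentially correct. Parts (i) and (ii) are fine: membership in $\uca(\Delta)$ is indeed Lemma~5.3 of \cite{Fs1}, the observation that $\chi(\Gr^{0}(\Coker(\g)))=1$ shows $C_W(\g)$ has the shape \eqref{eq:z} with $\g$-vector exactly $\g$, and Lemma~\ref{L:independent} then yields injectivity and linear independence under the full-rank hypothesis.

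For part (iii), your diagnosis is correct that the substantive point is identifying the DWZ $F$-polynomial of the rigid decorated representation $\mc{M}_m$ with the generic $F$-polynomial attached to $\g(\mc{M}_m)$, but you slightly over-engineer the resolution. You do not need any separate constancy-of-Euler-characteristic or constructibility-in-flat-families argument: once $M_m$ is rigid (i.e.\ $\Ext^1_J(M_m,M_m)=0$), the orbit of its minimal presentation is open and dense in $\PHom_J(\g(\mc{M}_m))$, so by definition $\Coker(\g(\mc{M}_m))$ \emph{is} $M_m$ up to isomorphism, and the quiver Grassmannians (hence their Euler characteristics) literally coincide rather than merely matching. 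This is precisely what Plamondon's genericity theorem packages, so citing it ``in one shot'' is the clean route. One further small omission: the nondegeneracy hypothesis is what guarantees every mutation sequence used to produce $\mc{M}_m$ from the negative simple decorated representation is admissible, and it should be invoked explicitly at that point of the argument. Likewise the $\mu$-supportedness of $M_m$ is a fact established in \cite{Fs1} rather than something obvious from the construction, so it deserves a citation rather than a bare assertion.
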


\begin{definition} \label{D:model}
	We say that an IQP $(\Delta,W)$ {\em models} an algebra $\mc{A}$ if the generic cluster character maps $G(\Delta,W)$ onto a basis of $\mc{A}$.
	If $\mc{A}$ is the upper cluster algebra $\uca(\Delta)$, then we simply say that $(\Delta,W)$ is a {\em cluster model}.
	If in addition $G(\Delta,W)$ is given by lattice points in some polyhedron, then we say that the model is {\em polyhedral}.
\end{definition}

\noindent This definition itself does not require the IQP to be nondegenerate. 
%If $\uca(\Delta)$ is $\bs{\sigma}$-graded with finite-dimensional graded components, then by Remark \ref{R:basis} $(\Delta,W)$ being a cluster model is equivalent to that $\uca(\Delta)$ is g-indexed and $G(\Delta,W)=G(\Delta)$.
\cite[Proposition 5.15]{Fs1} implies that being a (polyhedral) cluster model is mutation-invariant.
%Since the mutation of $g$-vector is unimodular and piecewise linear, the polyhedral property is preserved under mutations.

\section{Extensions} \label{S:ext}

\subsection{Extension Through Coefficient Variables} \label{ss:extfr}
Let $\Delta$ be an ice subquiver of $\ul{\Delta}$ such that $\ul{\Delta}_0 = \Delta_0 \sqcup \bs{e}$.
Unless otherwise stated, we assume in this subsection that $\bs{e}$ is a set of frozen vertices.
Suppose that $(\ul{\Delta},\ul{\b{x}})$ and $(\Delta,\b{x})$ are two seeds satisfying that 
$\ul{\b{x}}(u)$ is equal to $\b{x}(u)$ up to a monomial factor in $\ul{\b{x}}(\bs{e})$ for each $u\in \ul{\Delta}_0$, that is,
\begin{equation} \label{eq:ext} \ul{\b{x}}(u) = \b{x}(u) \ul{\b{x}}(\bs{e})^{\ul{\Theta}(u)}\ \text{ for some $\ul{\Theta}(u)\in \mb{Z}^{\bs{e}}$.}
\end{equation}
Here, by convention we set $\b{x}(e) = 1$ for each $e\in\bs{e}$ so that $\ul{\Theta}(e)=\e_e$.
%In this situation, we say $(\ul{\Delta},\ul{\b{x}})$ is a {\em monomial extension} of $(\Delta,\b{x})$ via $\ul{\b{x}}(\bs{e})$.

\begin{definition} We say that $(\ul{\Delta},\ul{\b{x}})$ is a {\em balanced extension} of $(\Delta,\b{x})$ via $\ul{\b{x}}(\bs{e})$ if
 $\ul{\Theta}: \ul{\Delta}_0 \to \mb{Z}^{\bs{e}}$ is a weight configuration.
\end{definition}
\noindent This is equivalent to say that \begin{equation} \label{eq:balanced} \prod_{\substack{v\to u \\ v\in \ul{\Delta}_0}} \ul{\b{x}}(v) + \prod_{\substack{u \to w\\ w\in \ul{\Delta}_0}} \ul{\b{x}}(w) = m\Bigg(\prod_{\substack{v\to u \\ v\in \Delta_0}} \b{x}(v) + \prod_{\substack{u\to w \\ w\in \Delta_0}} \b{x}(w)\Bigg),\end{equation}
for each $u$ mutable in $\Delta$ and some monomial $m$ in $\ul{\b{x}}(\bs{e})$ depending on $u$.
This reformulation somehow justifies why it is called balanced.

Since a weight configuration can be mutated, being a balanced extension is preserved under mutations. 
More precisely, if $(\ul{\Delta}',\ul{\b{x}}';\ul{\Theta}')$ and $(\Delta',\b{x}')$ are obtained from 
$(\ul{\Delta},\ul{\b{x}};\ul{\Theta})$ and $(\Delta,\b{x})$ through the same sequence of mutations, then
\begin{equation} \label{eq:extmu} \ul{\b{x}}'(u) = \b{x}'(u) \ul{\b{x}}(\bs{e})^{\ul{\Theta}'(u)}. \end{equation}

\begin{lemma} \label{L:frext=} For any balanced extension $(\ul{\Delta},\ul{\b{x}})$ of $(\Delta,\b{x})$ we have that
	$$\uca(\ul{\Delta},\ul{\b{x}})_{\ul{\b{x}}(\bs{e})}=\uca(\Delta,\b{x})[\ul{\b{x}}(\bs{e})^{\pm 1}].$$
\end{lemma}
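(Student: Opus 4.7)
\smallskip\noindent\textbf{Plan.} My plan is to prove both inclusions by working seed-by-seed. Since $\bs{e}$ consists entirely of frozen vertices, $\ul{\Delta}_\mu=\Delta_\mu$, so mutations of $(\Delta,\b{x})$ and $(\ul{\Delta},\ul{\b{x}})$ along a common sequence of vertices are in bijection and produce seeds $(\b{x}^\dag,\ul{\b{x}}^\dag)$ related by \eqref{eq:extmu}: $\ul{\b{x}}^\dag(u)=\b{x}^\dag(u)\,\ul{\b{x}}(\bs{e})^{\ul{\Theta}^\dag(u)}$ for $u\in\Delta_0$, while $\ul{\b{x}}^\dag(e)=\ul{\b{x}}(e)$ for $e\in\bs{e}$. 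Substituting these relations into the definition of $\mc{L}_{\ul{\Delta}}(\ul{\b{x}}^\dag)$ and using that the resulting powers of $\ul{\b{x}}(\bs{e})$ become invertible after localization, one obtains the seed-level identity
\[
\mc{L}_{\ul{\Delta}}(\ul{\b{x}}^\dag)_{\ul{\b{x}}(\bs{e})}=\mc{L}_\Delta(\b{x}^\dag)[\ul{\b{x}}(\bs{e})^{\pm 1}]
\]
for every corresponding pair of seeds; this is the bridge between the two sides of the lemma.

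\smallskip\noindent\textbf{Inclusion $\subseteq$.} Localizing the defining intersection of $\uca(\ul{\Delta},\ul{\b{x}})$ yields $\uca(\ul{\Delta},\ul{\b{x}})_{\ul{\b{x}}(\bs{e})}\subseteq\bigcap_\dag\mc{L}_\Delta(\b{x}^\dag)[\ul{\b{x}}(\bs{e})^{\pm 1}]$. Since the set $\ul{\b{x}}(\bs{e})$ is algebraically independent from $\b{x}$ inside the ambient field, any element on the right-hand side admits a unique finite expansion $z=\sum_N w_N\,\ul{\b{x}}(\bs{e})^N$ whose coefficients $w_N$ lie in the fraction field of $k[\b{x}]$. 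Membership in each $\mc{L}_\Delta(\b{x}^\dag)[\ul{\b{x}}(\bs{e})^{\pm 1}]$ forces $w_N\in\mc{L}_\Delta(\b{x}^\dag)$, and intersecting over $\dag$ puts $w_N\in\uca(\Delta,\b{x})$. Hence $\uca(\ul{\Delta},\ul{\b{x}})_{\ul{\b{x}}(\bs{e})}\subseteq\uca(\Delta,\b{x})[\ul{\b{x}}(\bs{e})^{\pm 1}]$.

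\smallskip\noindent\textbf{Inclusion $\supseteq$.} It suffices to show that every $w\in\uca(\Delta,\b{x})$ admits some $N\in\mb{Z}_{\geq 0}^{\bs{e}}$ with $\ul{\b{x}}(\bs{e})^N w\in\mc{L}_{\ul{\Delta}}(\ul{\b{x}}^\dag)$ \emph{simultaneously} for every seed $\dag$. The crucial observation is that each $\ul{\b{x}}(e)$, being a frozen variable, is literally a polynomial-ring generator of $\mc{L}_{\ul{\Delta}}(\ul{\b{x}}^\dag)$ in every seed; consequently the $\ul{\b{x}}(e)$-adic valuation $v_e$ on $\mc{F}$ is intrinsic to $w$, and coincides, in every seed, with the minimum exponent of $\ul{\b{x}}(e)$ across monomials in the Laurent expansion of $w$. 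Taking $N(e):=\max(0,-v_e(w))$, which is finite because $w\in\mc{L}_\Delta(\b{x})[\ul{\b{x}}(\bs{e})^{\pm 1}]$, and substituting $\b{x}^\dag(u)=\ul{\b{x}}^\dag(u)\,\ul{\b{x}}(\bs{e})^{-\ul{\Theta}^\dag(u)}$ into any expansion $w=\sum_J c_J^\dag(\b{x}^\dag)^J$ with $J|_{\Delta_\nu}\geq 0$, one verifies directly that $\ul{\b{x}}(\bs{e})^N w$ has non-negative $\Delta_\nu$-exponents (inherited from $J|_{\Delta_\nu}$) and non-negative $\bs{e}$-exponents (bounded below by $N(e)+v_e(w)\geq 0$). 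Therefore $\ul{\b{x}}(\bs{e})^N w\in\uca(\ul{\Delta},\ul{\b{x}})$, giving $w\in\uca(\ul{\Delta},\ul{\b{x}})_{\ul{\b{x}}(\bs{e})}$.

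\smallskip\noindent\textbf{Main obstacle.} The substantive hurdle is producing an $N$ that works uniformly across the (typically infinite) family of seeds. This is exactly what the valuation argument resolves: since $\ul{\b{x}}(e)$ is literally the same variable in every $\ul{\b{x}}^\dag$, the integer $v_e(w)$ does not fluctuate with $\dag$, and a single bound on $N$ suffices. Once this is noticed, the rest of the proof is direct computation with the balanced-extension formula.
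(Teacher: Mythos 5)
Your overall strategy matches the paper's: compare seed-by-seed using the monomial relation $\ul{\b{x}}^\dag(u)=\b{x}^\dag(u)\ul{\b{x}}(\bs{e})^{\ul{\Theta}^\dag(u)}$, and deduce the equality from the resulting identity $\mc{L}_{\ul{\Delta}}(\ul{\b{x}}^\dag)_{\ul{\b{x}}(\bs{e})}=\mc{L}_\Delta(\b{x}^\dag)[\ul{\b{x}}(\bs{e})^{\pm 1}]$. Your inclusion ``$\subseteq$'' is correct: localizing the defining intersection and then using that $\b{x}\sqcup\ul{\b{x}}(\bs{e})$ is algebraically independent (since it is related to $\ul{\b{x}}$ by a unitriangular monomial change) cleanly puts the coefficients $w_N$ back into $\uca(\Delta,\b{x})$.

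The ``$\supseteq$'' direction, however, has a real gap. You need a single $N$ so that $\ul{\b{x}}(\bs{e})^N w\in\mc{L}_{\ul{\Delta}}(\ul{\b{x}}^\dag)$ simultaneously for all seeds $\dag$, and your mechanism for producing it is the assertion that the $\ul{\b{x}}(e)$-adic valuation ``is intrinsic'' because ``$\ul{\b{x}}(e)$ is literally a polynomial-ring generator of $\mc{L}_{\ul{\Delta}}(\ul{\b{x}}^\dag)$ in every seed.'' That inference is invalid: being a distinguished generator in two different polynomial rings does not force the induced valuations on the common fraction field to agree. For instance, $x$ is a generator of both $k[x,y]$ and $k[x,z]$ with $z=xy$, yet the two $x$-adic valuations on $k(x,y)$ differ (on $y=z/x$ they give $0$ and $-1$ respectively). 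What actually makes the valuations coincide here is a cluster-specific fact: if $\ul{\b{x}}^{\dag'}$ is obtained from $\ul{\b{x}}^\dag$ by a single mutation at $u'$, the exchange binomial $\rho$ is a sum of two monomials in the $\ul{\b{x}}^{\dag'}$-variables, and since $\ul{\Delta}^{\dag'}$ has no oriented $2$-cycles, at most one of them can be divisible by $\ul{\b{x}}(e)$; hence $\rho$ has a term free of $\ul{\b{x}}(e)$, so $\ul{\b{x}}^\dag(u')=\rho/\ul{\b{x}}^{\dag'}(u')$ has $v_e^{\dag'}$-valuation $0$. Induction along the mutation sequence then gives $v_e^{\dag'}=v_e^\dag$ on all of $\mc{F}$, which is exactly the seed-independence you invoke. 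Without this step (or some equivalent, e.g.\ restricting to the full-rank case and using Theorem \ref{T:bounds} to replace the infinite intersection by the finite upper-bound intersection, where localization and intersection trivially commute), the uniform choice of $N$ is not justified. The paper's proof of Lemma \ref{L:CR1coef} contains precisely this inductive no-$2$-cycle argument, and you should supply it (or cite it) to close the gap.
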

\begin{proof} We have seen from \eqref{eq:extmu} that each $\ul{\b{x}}'(u)$ differs from $\b{x}'(u)$ by a monomial factor in $\ul{\b{x}}(\bs{e})$. The claim follows easily from the definition of the upper cluster algebra.
\end{proof}

%\begin{remark} 
%\end{remark}

It is a trivial fact that balanced extensions always exist if $B:=B(\Delta)$ has full rank.
Indeed, we put $\bs{e}$ as last frozen vertices so the $B$-matrix $\ul{B}:=B(\ul{\Delta})$ decomposes in blocks $(B,B_{\bs{e}})$.
We write $\ul{\Theta}$ in corresponding blocks $\ul{\Theta}=\sm{\Theta\\ \Id_{\bs{e}}}$.
The condition $\ul{B}\ul{\Theta}=0$ is equivalent to that
\begin{equation} \label{eq:BTheta} B\Theta = -B_{\bs{e}}. \end{equation}
This is an overdetermined linear equation (for solving $\Theta$) if $B$ has full rank.
For the rest of this subsection, we always assume that $(\ul{\Delta},\ul{\b{x}})$ is a balanced extension of $(\Delta,\b{x})$ via $\ul{\b{x}}(\bs{e})$ and some $\ul{\Theta}$.

Let $\ul{\b{y}}$ be the set of $y$-variables for the seed $(\ul{\Delta},\ul{\b{x}})$.
\begin{lemma} \label{L:xy} We have that $\b{x}^{\g}=\ul{\b{x}}^{(\g,-\g\Theta)}$ and $\b{y}^{\sf d}=\ul{\b{y}}^{\sf d}$.
\end{lemma}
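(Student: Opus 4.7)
The plan is to verify both identities by direct substitution, using only the defining relation \eqref{eq:ext} for a balanced extension and the matrix equation \eqref{eq:BTheta} which encodes the weight-configuration condition $\ul{B}\ul{\Theta}=0$.

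For the first identity, I would start from \eqref{eq:ext}, invert it to get $\b{x}(u) = \ul{\b{x}}(u)\,\ul{\b{x}}(\bs{e})^{-\Theta(u)}$ for each $u\in\Delta_0$ (where $\Theta(u)$ is the $u$-th row of $\Theta$), raise both sides to the power $\g(u)$, and take the product over $u\in\Delta_0$. The $\ul{\b{x}}(u)$ factors assemble into $\ul{\b{x}}^{(\g,0)}$, while the $\ul{\b{x}}(\bs{e})$ factors combine into $\ul{\b{x}}(\bs{e})^{-\sum_u \g(u)\Theta(u)} = \ul{\b{x}}(\bs{e})^{-\g\Theta}$, viewing $\g$ as a row vector. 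Together these give $\ul{\b{x}}^{(\g,-\g\Theta)}$.

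For the second identity, I would apply the first identity to the special case $\g = -b_u$, where $b_u$ is the $u$-th row of $B$. This gives $y_u = \b{x}^{-b_u} = \ul{\b{x}}^{(-b_u,\, b_u\Theta)}$. On the other hand, writing $\ul{B} = (B,B_{\bs{e}})$ in block form, the $u$-th row of $\ul{B}$ is $(b_u, b_u^{\bs{e}})$, so $\ul{y}_u = \ul{\b{x}}^{-(b_u, b_u^{\bs{e}})}$. The two expressions agree precisely when $b_u\Theta = -b_u^{\bs{e}}$, and taking this across all mutable $u$ is exactly the relation $B\Theta = -B_{\bs{e}}$ from \eqref{eq:BTheta}. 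Since the mutable vertices of $\Delta$ and $\ul{\Delta}$ coincide (we only adjoined frozen vertices in $\bs{e}$), the $y$-variable $y_u$ equals $\ul{y}_u$ for every $u\in \Delta_\mu = \ul{\Delta}_\mu$, and then $\b{y}^{\sf d} = \ul{\b{y}}^{\sf d}$ follows at once by multiplicativity.

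There is essentially no obstacle here — the content is a bookkeeping argument. The only thing to watch is the convention that $\ul{\Theta}(e) = \e_e$ for $e\in\bs{e}$, which is why $\b{x}(e)=1$ plays no role and the computation cleanly reduces to the $\Delta_0$-block. The lemma really serves as a dictionary translating $(\b{x},\b{y})$-monomials over $\Delta$ into $(\ul{\b{x}},\ul{\b{y}})$-monomials over $\ul{\Delta}$, and the point of the proof is to record that this dictionary is governed entirely by the matrix $\Theta$ solving \eqref{eq:BTheta}.
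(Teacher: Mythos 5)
Your proof is correct and follows the same route as the paper: the first identity is just a rearrangement of \eqref{eq:ext}, and the second comes from applying it to $\g=-b_u$ and invoking $B\Theta=-B_{\bs{e}}$, which matches the paper's one-line verification that $\ul{b}_u=(b_u,-b_u\Theta)$. You spell out the bookkeeping in more detail, but the underlying argument is identical.
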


\begin{proof} The statement for $x$-variables is a rephrasing of \eqref{eq:ext}. For $y$-variables, 
	it suffices to show that $y_u=\ul{y}_u$ for each $u\in \Delta_\mu$.
	The equality \eqref{eq:BTheta} implies that the $u$-th row $b_u$ and $\ul{b}_u$ of $B$ and $\ul{B}$ are related by $\ul{b}_u=(b_u,-b_u\Theta)$.
	We have that $$y_u=\b{x}^{-b_u}=\ul{\b{x}}^{(-b_u,b_u\Theta)}=\ul{\b{x}}^{-\ul{b}_u}=\ul{y}_u.$$
\end{proof}

Now we are ready to study the IQP models. Let $(\Delta,W)$ be an IQP as in Section \ref{ss:QP}.
%We keep assuming that its Jacobian algebra is finite-dimensional. 
We first consider a special case when $\bs{e}$ is a single sink or source $e$.
In this case, we put the same potential $W$ on $\ul{\Delta}$. 
Let $\ul{P}_u$ be the indecomposable projective representation of $(\ul{\Delta},W)$ corresponding to $u\in \ul{\Delta}_0$.

\begin{lemma}  \label{L:frsource} 
%	Suppose that $e$ is a frozen sink of $\ul{\Delta}$, then
%	\begin{enumerate}
%		\item $G(\ul{\Delta},W) = \{(\g,h)\mid \g\in G(\Delta,W),\ h\geq \dim(\Coker(\g,0)(e)) \}$; 
%		\item $C_{W}((\g,h)) = C_W(\g)\ul{x}_e^{h+\g\Theta}$ for $(\g,h)\in G(\ul{\Delta},W)$.
%	\end{enumerate}
Suppose that $e$ is a frozen source of $\ul{\Delta}$, then
	\begin{enumerate}
		\item[(1)] $G(\ul{\Delta},W) = G(\Delta,W)+\mb{Z}_{\geq 0}\e_e$;
		\item[(2)] $C_{W}((\g,h)) = C_W(\g)\ul{x}_e^{h+\g\Theta}$ for any $\g\in G(\Delta,W)$ and $h\in\mb{Z}_{\geq0}$.	
	\end{enumerate}
\end{lemma}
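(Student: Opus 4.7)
The plan is to exploit that $e$ is a frozen source in $\ul{\Delta}$, and that $W$---being a potential on $\Delta$---cannot involve any arrow incident to $e$ (no cycle in $\ul{\Delta}$ can pass through a source anyway). This rigidifies the structure of $\ul{P}_e$ and the homomorphism spaces into the other projectives, and reduces the analysis of $\PHom_{\ul{J}}(\g,h)$ (where $\ul{J}:=J(\ul{\Delta},W)$) to that of $\PHom_J(\g)$ over $J:=J(\Delta,W)$.

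The first step is to verify that, for $u \in \Delta_0$, the projective $\ul{P}_u$ coincides with $P_u$ (no path starting at $u$ ever visits the source $e$) and moreover $(\ul{P}_u)_e = 0$ (no non-trivial path terminates at a source). This yields the Hom-vanishing
\[ \Hom_{\ul{J}}(\ul{P}_e,\ul{P}([-\g]_+)) = (\ul{P}([-\g]_+))_e = 0 \]
for any $\g$ supported on $\Delta_0$. Using this, I would unpack $\PHom_{\ul{J}}(\g,h)$ in two cases. For $h \geq 0$, the $h\cdot\ul{P}_e$ summand of the domain is forced to be the zero map by the vanishing above, so every presentation restricts to one in $\PHom_J(\g)$ with the same cokernel $\Coker(\g,h) = \Coker(\g) =: M$, viewed as a $\ul{J}$-module with trivial $e$-component. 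For $h < 0$, the target contains $(-h)\cdot\ul{P}_e$, whose top $S_e^{-h}$ cannot be hit by any map from $\ul{P}([\g]_+)$ (which has no $S_e$ in its top); hence $\Coker(\g,h)$ admits $S_e^{-h}$ as a quotient and fails to be $\mu$-supported. Combining the two cases yields (1).

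For (2), since $\ul{M} = M$ and both are supported on $\Delta_\mu = \ul{\Delta}_\mu$, the quiver Grassmannians $\Gr^\e(\ul{M})$ and $\Gr^\e(M)$ agree for every $\e$. Lemma \ref{L:xy} then supplies $\ul{\b{y}}^\e = \b{y}^\e$ and $\ul{\b{x}}^{(\g,h)} = \b{x}^\g\,\ul{x}_e^{h+\g\Theta}$, so the desired formula drops out of the definition \eqref{eq:genCC} of the generic character. The only non-routine step in the whole argument is the Hom-vanishing in the second paragraph, which I do not expect to be a genuine obstacle since it is forced by the source condition on $e$; everything else is direct bookkeeping against Lemma \ref{L:xy}.
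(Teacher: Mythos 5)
Your proof is correct and follows essentially the same route as the paper's: the paper also argues that $h\ge 0$ is forced because $e$ is a source, uses $\Hom_{\ul J}(\ul P_e,\ul P_u)=0$ for $u\in\Delta_0$ to split off the $\ul P_e$-summand, identifies $\Coker(\g,h)$ with the extension-by-zero of $\Coker(\g)$, and then invokes \eqref{eq:genCC} together with Lemma \ref{L:xy}. You have merely spelled out the two sub-arguments the paper leaves implicit (the Hom-vanishing via $(\ul P_u)_e=0$, and the top-of-$\ul P_e$ argument for $h<0$).
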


\begin{proof} 
(1). Let $\ul{\g}:=(\g,h)$ be any vector in $G(\ul{\Delta},W)$. In particular, $\Coker(\ul{\g})$ is not supported on $e$.
	Since $e$ is a source, $h$ must be nonnegative. Moreover, there is no morphism from $P_e$ to other indecomposable projectives. 
	It follows that $\g$ must be in $G(\Delta,W)$. The other direction is more obvious.\\
(2). Since $e$ is a source, we have that $\Coker((\g,h))$ is the extension by a zero from $\Coker(\g)$.
    The equality follows from \eqref{eq:genCC} and Lemma \ref{L:xy}.
\end{proof}

\begin{lemma} \label{L:sink_model} Suppose that $e$ is a frozen source or sink of $\ul{\Delta}$ and \eqref{eq:BTheta} has a solution.
Then $(\ul{\Delta},W)$ is a (polyhedral) cluster model if and only if so is $(\Delta,W)$.
\end{lemma}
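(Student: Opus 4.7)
The plan is to match bases between $\uca(\Delta)$ and $\uca(\ul{\Delta})$ via Lemmas \ref{L:frsource} and \ref{L:frext=}. I will handle the case where $e$ is a source in detail; the sink case follows by a parallel analysis yielding the same bijection $G(\ul{\Delta},W) \leftrightarrow G(\Delta,W)\times \mb{Z}_{\geq 0}$ and the same character identity $C_W((\g,h)) = C_W(\g)\,\ul{x}_e^{h+\g\Theta}$ (the key dual observation being that the simple projective $\ul{P}_e=S_e$ at a sink receives no morphism from any $P_u^{(\ul{\Delta})}$ with $u\neq e$). For the ``if'' direction, assume $(\Delta,W)$ is a cluster model. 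Linear independence of $\{C_W(\ul{\g})\}_{\ul{\g}\in G(\ul{\Delta},W)}$ in $\uca(\ul{\Delta})$ is immediate from Lemma \ref{L:independent}. For spanning, take $f\in \uca(\ul{\Delta}) \subseteq \uca(\Delta)[\ul{x}_e^{\pm 1}]$ via Lemma \ref{L:frext=}, and expand uniquely in the $\uca(\Delta)$-basis as
\[ f = \sum_{\g\in G(\Delta,W),\, n\in\mb{Z}} c_{\g,n}\, C_W(\g)\,\ul{x}_e^n = \sum_{\g,n} c_{\g,n}\, C_W((\g,\,n-\g\Theta)), \]
the second equality coming from Lemma \ref{L:frsource}(2) (formally extended to all $h\in\mb{Z}$). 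What remains is to force $c_{\g,n}=0$ whenever $n-\g\Theta<0$.

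This vanishing is the main obstacle, which I resolve by a leading-term computation in the $\ul{\b{x}}$-expansion. By Lemma \ref{L:xy} combined with \eqref{eq:BTheta}, the monomial $\ul{\b{y}}_u$ has $\ul{x}_e$-degree $|e\to u|\ge 0$ when $e$ is a source, so $C_W((\g,h)) = \ul{\b{x}}^{(\g,h)} F_\g(\ul{\b{y}})$ has minimum $\ul{x}_e$-power exactly $h$, with leading $\ul{x}_e^h$-coefficient $\prod_{u\in \Delta_0}\ul{x}(u)^{\g(u)}$. As these leading monomials are distinct for distinct $\g$ and $f\in \mc{L}_{\ul{\Delta}}(\ul{\b{x}})$ is polynomial in $\ul{x}_e$, comparing coefficients of $\ul{x}_e^h$ for each $h<0$ against these linearly independent leading monomials forces $c_{\g,\g\Theta+h}=0$.

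For the converse, suppose $(\ul{\Delta},W)$ is a cluster model. Localizing the basis $\{C_W((\g,h))\}_{(\g,h)\in G(\Delta,W)\times \mb{Z}_{\geq 0}}$ at $\ul{x}_e$ and applying Lemma \ref{L:frext=} yields $\{C_W(\g)\,\ul{x}_e^n\}_{\g\in G(\Delta,W),\, n\in \mb{Z}}$ as a $k$-basis of $\uca(\Delta)[\ul{x}_e^{\pm 1}]$. Since $\ul{x}_e$ is transcendental over $\uca(\Delta)$, isolating the $\ul{x}_e^0$-component recovers $\{C_W(\g)\}_{\g\in G(\Delta,W)}$ as a $k$-basis of $\uca(\Delta)$, so $(\Delta,W)$ is a cluster model. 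The polyhedral assertion transfers in both directions via the direct-product decomposition $G(\ul{\Delta},W) = G(\Delta,W)\times \mb{Z}_{\geq 0}$.
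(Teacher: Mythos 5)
Your argument for the source case is sound in outline and matches the paper's approach (compare $\ul{x}_e$-degrees of leading terms, invoke Lemma \ref{L:frext=} and \ref{L:frsource}). One small inaccuracy there: the $\ul{x}_e^h$-coefficient of $C_W((\g,h))$ is not the single monomial $\prod_{u\in\Delta_0}\ul{\b{x}}(u)^{\g(u)}$, because every $\ul{y}_u$ with $u$ not adjacent to $e$ has $\ul{x}_e$-degree $0$, so many $\ul{\b{y}}^\ep$ terms contribute. What is true is that this monomial is the \emph{leading term} of the $\ul{x}_e^h$-coefficient with respect to the auxiliary grading of Lemma \ref{L:independent}; that is enough for linear independence, but the distinction must be made explicit.

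The real gap is the sink case. You claim a ``parallel analysis yielding the same bijection $G(\ul{\Delta},W)\leftrightarrow G(\Delta,W)\times\mb{Z}_{\geq 0}$ and the same character identity,'' citing the observation that $P_e=S_e$ receives no morphism from $P_u$ with $u\neq e$. That observation is correct but irrelevant: when $h>0$ and $e$ is a sink, the added summand $P_e^h$ sits in degree $-1$ of the presentation, so what matters is $\Hom(P_e,P(\beta_0))=\bigoplus_v (P_v)_e^{\beta_0(v)}$, which is generally nonzero. Adding copies of $P_e$ therefore \emph{does} change the cokernel, and its $e$-component vanishes only when $h$ is large enough. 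Concretely, take $\Delta$ a single mutable vertex $u$ with $W=0$ and $\ul{\Delta}$ with one arrow $u\to e$. Then $G(\Delta,W)=\mb{Z}$, while a general presentation of weight $(\g,h)$ with $\g<0$, $h\geq 0$ is $P_e^h\to P_u^{-\g}$ whose cokernel has $e$-component of dimension $\max(0,-\g-h)$. Hence $G(\ul{\Delta},W)=\{(\g,h):h\geq\max(0,-\g)\}$, which is strictly smaller than $G(\Delta,W)+\mb{Z}_{\geq 0}\e_e$; for instance $(-1,0)$ lies in the latter but not the former. So neither Lemma \ref{L:frsource}(1) nor (2) carries over verbatim to sinks, and your ``converse'' argument (which localizes a basis indexed by $G(\Delta,W)\times\mb{Z}_{\geq 0}$) breaks. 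The paper sidesteps this entirely: it proves only the source case directly and obtains the sink case by passing to the opposite quiver $\ul{\Delta}^\opp$, where $e$ becomes a source, together with the dual form of the generic character \eqref{eq:genCC} built from submodule Grassmannians and injective copresentations. Your proof needs to do the same, or else work out the genuinely different shape of $G(\ul{\Delta},W)$ for a frozen sink.
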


\begin{proof} We only prove the source case because the sink case can be proved by considering the opposite quiver of $\ul{\Delta}$ and the dual version of \eqref{eq:genCC}.

Suppose that $(\Delta,W)$ is a cluster model. 
Let $(\ul{\Delta},\ul{\b{x}})$ be a balanced extension of $(\Delta,\b{x})$ via $\ul{\b{x}}(\bs{e})$ and some $\ul{\Theta}$.
Let $t$ be any $\ul{\Theta}$-degree $d$ element in $\uca(\ul{\Delta},\ul{\b{x}})$.
Recall from \eqref{eq:ext} that any element in $\uca(\Delta,\b{x})$ has $\ul{\Theta}$-degree zero.
By Lemma \ref{L:frext=} $t=s\ul{x}_e^d$ for some $s\in \uca(\Delta,\b{x})$.
We can write $s$ as a sum $s=\sum_{i} a_i C_W(\g_i)$ for some $a_i\in k$ and $\g_i\in G(\Delta,W)$.
By Lemma \ref{L:frsource}.(2), each $C_W(\g_i)$ is equal to $C_W((\g_i,0))\ul{x}_e^{-\g_i\Theta}$.
Note that $C_W((\g_i,0))$ contains a ``leading" term $\ul{\b{x}}^{(\g_i,0)}$.
The meaning of ``leading" is explained after Lemma \ref{L:independent}. 
Since $t$ is polynomial in $\ul{x}_e$ and the $\g_i$ are distinct, we conclude that $d-\g_i\Theta\geq 0$ for each $i$.
By Lemma \ref{L:frsource} we have that $t=\sum_{i} a_i C_W((\g_i,d-\g_i\Theta))$,  so $(\ul{\Delta},W)$ is a cluster model. 
The proof shows that if in addition $(\Delta,W)$ is polyhedral, then so is $(\ul{\Delta},W)$.

The argument for the converse is rather trivial. Indeed, suppose that $(\ul{\Delta},W)$ is a cluster model.
Then by Lemma \ref{L:frext=} for any $s\in \uca(\Delta,\b{x})$, $t:=s\ul{x}_e^d\in \uca(\ul{\Delta},\ul{\b{x}})$ for sufficiently large $d$.
We write $t$ as $t=\sum_{i} a_i C_W((\g_i,h_i))$.
%Recall that $C_{W}((\g,h)) = C_W(\g)\ul{x}_e^{h+\g\Theta}$.
Then $s=\sum_{i} a_i C_W(\g_i)\ul{x}_e^{h_i+\g_i\Theta-d}$ by Lemma \ref{L:frsource}.
Similar degree argument shows that each $h_i+\g_i\Theta-d$ must vanish so that $s=\sum_{i} a_i C_W(\g_i)$.
\end{proof}

\begin{remark} The direction $``\Rightarrow"$ of the above lemma is true without the sink-source assumption. 
We conjecture that this direction is true for $e$ mutable as well. A weaker statement (with an additional condition on $\ul{x}_e$) was already proved in \cite[Theorem 5.6]{Fs2}. We now sketch a proof of the above claim.
We check that assignment $\ul{\b{x}}(u)\mapsto \b{x}(u)$ and $\ul{\b{x}}(e)\mapsto 1$ induces an epimorphism $\ell_e:\uca(\ul{\Delta},\ul{\b{x}})\to \uca(\Delta,\b{x})$. 
Then it suffices to show the following diagram commutes:
$$\vcenter{\xymatrix@C=7ex@R=5ex{
		G(\ul{\Delta},\ul{W}) \ar@{^(->}[r]^{C_{\ul{W}}} \ar@{->>}[d]_{{\rm l}_e} & \uca(\ul{\Delta},\ul{\b{x}}) \ar@{->>}[d]_{\ell_e} \\
		G(\Delta,W) \ar@{^(->}[r]^{C_W} & \uca(\Delta,\b{x})  %\ar@<-1ex>@{_(->}[u]_{i_e}
}}$$
where ${\rm l}_e$ is the map forgetting the $e$-th coordinate $(\g,h)\mapsto \g$.
Using Lemma \ref{L:xy} and the restriction and induction functor (see \cite[Section 5.1 and Lemma 5.4]{Fs2}), we can show that the diagram indeed does commute.
\end{remark}

\begin{definition}[\cite{GHKK}]
	We say that a (frozen or mutable) vertex $e$ can be {\em optimized} in $\Delta$ if there is a sequence of mutations away from $e$ making $e$ into a sink or source of $\Delta$. 
	It can be optimized in an IQP $(\Delta,W)$ if in addition such a sequence is admissible.
\end{definition} %\marginpar{Is away from $e$ necessary?}

Finally we come back to the more general case when $\bs{e}$ can be more than one vertex.
Let $\ul{W}$ be any potential on $\ul{\Delta}$ such that its restriction on $\Delta$ is $W$.
For any admissible sequence of mutations $\bs{\mu}$, we set $(\ul{\Delta}',\ul{W}'):=\bs{\mu}(\ul{\Delta},\ul{W})$ and $(\Delta',W'):=\bs{\mu}(\Delta,W)$.
Since mutations are away from $\bs{e}$, $\Delta'$ is a subquiver of $\ul{\Delta}'$ and we have that $\ul{W}'\mid_{\Delta'}= W'$.

\begin{theorem} \label{T:model_frext}
	Suppose that $B(\Delta)$ has full rank and each vertex in $\bs{e}$ can be optimized in $(\Delta,W)$.
	If $(\Delta,W)$ is a (polyhedral) cluster model, then so is $(\ul{\Delta},\ul{W})$.
\end{theorem}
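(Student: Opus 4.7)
The plan is to induct on $|\bs{e}|$, with Lemma~\ref{L:sink_model} serving as the single-vertex engine and the mutation-invariance of the (polyhedral) cluster-model property (noted after Definition~\ref{D:model}) as the glue. The base case $|\bs{e}|=0$ is trivial.

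For the inductive step, I would pick any $e_1\in\bs{e}$ and use the optimization hypothesis to produce an admissible sequence $\bs{\mu}_{e_1}$ on $(\ul{\Delta},\ul{W})$, consisting of mutations at vertices of $\ul{\Delta}_\mu=\Delta_\mu$ (equality because $\bs{e}$ is frozen), such that $e_1$ is a sink or source of $\bs{\mu}_{e_1}(\ul{\Delta})$. The same sequence restricts to an admissible one on $(\Delta,W)$ since any $2$-cycle obstruction in the subquiver would lift, and mutation is compatible with taking the ice subquiver $\Delta\subset\ul{\Delta}$ at mutable vertices. Applying $\bs{\mu}_{e_1}$ to both IQPs and invoking mutation-invariance, I may assume WLOG that $e_1$ is already a sink or source of~$\ul{\Delta}$, while $(\Delta,W)$ remains a cluster model.

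Now set $\hat{\Delta}:=\ul{\Delta}\setminus\{e_1\}$ and $\hat{W}:=\ul{W}|_{\hat{\Delta}}$. Because $e_1$ is a sink/source, no oriented cycle of $\ul{\Delta}$ passes through $e_1$, hence $\hat{W}=\ul{W}$ as elements of the trace space. The one-vertex frozen extension $(\hat{\Delta},\hat{W})\subset(\ul{\Delta},\ul{W})$ via the sink/source $e_1$ satisfies the hypothesis of Lemma~\ref{L:sink_model}: equation \eqref{eq:BTheta} is solvable because $B(\hat{\Delta})$ inherits full row rank from $B(\Delta)$ (adjoining frozen columns does not drop rank). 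The lemma then reduces the claim for $(\ul{\Delta},\ul{W})$ to the same claim for $(\hat{\Delta},\hat{W})$, which extends $(\Delta,W)$ by the smaller set $\bs{e}\setminus\{e_1\}$.

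To close the induction I need each $e'\in\bs{e}\setminus\{e_1\}$ to be optimizable in $(\hat{\Delta},\hat{W})$. I would argue that mutation at any $u\in\Delta_\mu$ commutes with deletion of a frozen sink/source: on arrows this is locality of the mutation rule, and on the potential it follows from the fact that $\ul{W}$ has no cycle through $e_1$, so the DWZ rule produces only extra cycles that pass through $e_1$, which are exactly what restriction kills. Consequently admissibility transfers from $\ul{\Delta}$ to $\hat{\Delta}$ ($2$-cycles in the smaller quiver lift), and any optimizing sequence for $e'$ in $(\ul{\Delta},\ul{W})$ remains an optimizing sequence in $(\hat{\Delta},\hat{W})$. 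The inductive hypothesis then yields that $(\hat{\Delta},\hat{W})$ is a (polyhedral) cluster model, and polyhedrality propagates because Lemma~\ref{L:sink_model} preserves it.

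The step I expect to demand the most care is the compatibility claim that DWZ mutation at $u\neq e_1$ commutes with deletion of the frozen sink/source $e_1$ at the level of the potential. The arrow combinatorics is a direct locality check, but unwinding the ``new cycle'' contribution $[\alpha\beta][\beta^*\alpha^*]$ and confirming that its $e_1$-involving summands are precisely those discarded by restriction requires explicit bookkeeping. Once this is in place, the WLOG reduction and the verification of the inductive hypothesis both slot in routinely, and the theorem follows.
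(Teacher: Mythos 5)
Your argument is essentially the paper's: an induction on $|\bs{e}|$ whose engine is Lemma~\ref{L:sink_model} (after mutating the chosen vertex into a sink/source) and whose glue is mutation-invariance of the cluster-model property; you simply run it top-down, deleting one frozen vertex at a time from $\ul{\Delta}$, where the paper runs it bottom-up, adjoining $e_{k+1}$ to $\Delta_k$. That is a change of bookkeeping direction, not of substance, and your extra attention to why optimizability of the remaining $e'\in\bs{e}\setminus\{e_1\}$ descends to the intermediate quiver $\hat{\Delta}$ is a reasonable thing to spell out.

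One caution on the step you yourself flag as delicate. You propose to verify that DWZ mutation at $u\neq e_1$ commutes with deletion of $e_1$ by exploiting that $\ul{W}$ has no cycle through the sink/source $e_1$. That observation only controls a \emph{single} mutation: after mutating at a neighbor of $e_1$, the vertex $e_1$ is generally no longer a sink or source, and the mutated potential does acquire cycles through it, so your justification cannot be iterated along $\bs{\mu}_{e_1}$ or along the later optimizing sequence for $e'$. The statement you actually need is the general one -- for any admissible sequence $\bs{\mu}$ away from $\bs{e}$, restricting $\bs{\mu}(\ul{\Delta},\ul{W})$ to the subquiver on $\Delta_0\sqcup\bs{e}'$ agrees with applying $\bs{\mu}$ to the restriction, with no sink/source hypothesis -- and this is exactly what the paper records in the paragraph immediately preceding the theorem. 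Invoking that fact, rather than the sink/source-specific argument, closes the gap and makes the rest of your proof go through.
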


\begin{proof} The proof goes by induction on the number of vertices in $\bs{e}=\{e_1,e_2,\dots,e_n\}$.
	The statement is trivially true if $\bs{e}$ is empty. We assume that it is true for $n=k$.
	Let $(\Delta_k,\b{x}_k,W_k)$ be the restriction of $(\ul{\Delta},\ul{\b{x}},\ul{W})$ to its full subquiver containing $\Delta_0 \sqcup \{e_1,e_2,\cdots,e_k\}$. Then each $B(\Delta_k)$ has full rank. 
	
	Suppose that $(\Delta_k,W_k)$ is a cluster model.
	Let $(\Delta_{k+1},\b{x}_{k+1})$ be any balanced extension of $(\Delta_k,\b{x}_k)$ via $\ul{\b{x}}(e_{k+1})$.
	Let $\bs{\mu}$ be a sequence of mutations such that $e_{k+1}$ is a sink or source of $\bs{\mu}(\Delta_k)$.
	Recall that being a cluster model is mutation-invariant so $\bs{\mu}(\Delta_k,W_k)$ is a cluster model.
	By the discussion above, we have that $\bs{\mu}(W_{k+1})$ restricted on $\bs{\mu}(\Delta_k)$ is $\bs{\mu}(W_k)$. 
	So by Lemma \ref{L:sink_model} $\bs{\mu}(\Delta_{k+1},W_{k+1})$ is a cluster model and so is $(\Delta_{k+1},W_{k+1})$.
	The induction completes the proof.	
\end{proof}

\subsection{Extension Through Cluster Variables} \label{ss:extmu}
Let $\Delta$ still be an ice subquiver of $\ul{\Delta}$ such that $\ul{\Delta}_0 = \Delta_0 \sqcup \bs{e}$.
In this subsection we assume that vertices in $\bs{e}$ are all mutable. 
Recall that $\ul{\Delta}^{\bs{e}}$ is obtained from $\ul{\Delta}$ by freezing $\bs{e}$. 
We assume that $\uca(\ul{\Delta}^{\bs{e}},\ul{\b{x}})$ is a balanced extension of $\uca({\Delta},\b{x})$ through $\bs{e}$.
Recall that such an extension always exists if $B(\Delta)$ has full rank. 
To study the relation between $\uca(\Delta,\b{x})$ and $\uca(\ul{\Delta},\ul{\b{x}})$,
we employ $\uca(\ul{\Delta}^{\bs{e}},\ul{\b{x}})$ as an intermediate object.

As before, we first treat the case when $\bs{e}$ is a single vertex $e$.
Recall the notation that $\b{x}_u := \mu_u(\b{x})$ and $\b{x}_{\circ}:=\b{x}$.
We write the initial exchange relation at $u$ as ${x}_u{x}_u'=\rho_u$.
Let $\Gamma$ be an ice quiver. We shall apply Proposition \ref{P:muext=} below for $(\Gamma,\b{x})=(\ul{\Delta},\ul{\b{x}})$ later.  
\begin{lemma} Let $e$ be a sink or a source of ${\Gamma}$. Suppose that $({\Gamma}^e,\b{x})$ is CR1 and its UCA is Noetherian.
Then $\uca({\Gamma}^e,\b{x})[x_e']$ is a normal domain.
\end{lemma}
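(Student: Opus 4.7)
The strategy is to identify $B:=A[x_e']$ (where $A=\uca(\Gamma^e,\b{x})$) with the quotient $A[t]/(x_et-\rho_e)$ and then verify normality of this presentation. The sink-or-source hypothesis on $e$ gives $\rho_e=1+m$ (source) or $\rho_e=m+1$ (sink), where $m$ is a monomial in $\b{x}\setminus\{x_e\}$; in particular $x_e\nmid\rho_e$ in $\mc{L}_{\Gamma^e}(\b{x})$.

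First I would establish that $(x_e,\rho_e)$ is a regular sequence in $A$. By Lemma~\ref{L:CR1coef}, $(\rho_e,x_e)$ is a regular sequence. To swap the order, I reapply the ``claim'' inside the proof of Lemma~\ref{L:CR1coef} with $z=a$, $w=\rho_e$: if $\rho_e a\in(x_e)A$ for some $a\in A$, then $a\rho_e/x_e\in A\subseteq \mc{L}_{\Gamma^e}(\b{x})$, whence $a/x_e\in A$; hence $\rho_e$ is a nonzerodivisor modulo $x_e$, and $(x_e,\rho_e)$ is a regular sequence as well.

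Next I would prove the presentation $B\cong A[t]/(x_et-\rho_e)$ via the surjection $A[t]\twoheadrightarrow B$, $t\mapsto x_e'$. For the kernel equality, given $f(t)\in A[t]$ vanishing at $x_e'$, division in $A_{x_e}[t]$ by $x_et-\rho_e$ (whose leading coefficient $x_e$ is invertible there) gives $f=(x_et-\rho_e)q$ with $q\in A_{x_e}[t]$; an induction on the coefficients of $q$, using that $(x_e,\rho_e)$ is a regular sequence (so $\rho_e$ is a nonzerodivisor modulo every power of $x_e$), forces $q\in A[t]$. Thus $B$ is Noetherian, and it is a domain since $B\hookrightarrow\mathrm{Frac}(A)$ via $x_e'\mapsto\rho_e/x_e$.

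Finally, I would verify Serre's $R_1+S_2$ criterion. For $R_1$ at a height-$1$ prime $\mf{p}\not\ni x_e$: $B_\mf{p}$ is a localization of $B_{x_e}=A_{x_e}$, which is normal, so $B_\mf{p}$ is a DVR. For $R_1$ at a height-$1$ prime $\mf{p}\ni x_e$: the contraction $\mf{p}\cap A$ is a minimal prime over $(x_e,\rho_e)$, of height $2$; working in the ambient UFD $\mc{L}_{\Gamma^e}(\b{x})$ shows $\mf{p}\cap A$ is locally $2$-generated (by $x_e$ and an irreducible factor of $1+m$), hence $A_{\mf{p}\cap A}$ is regular. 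The relation $\rho_e=x_et$ in $B$ collapses $(x_e,\rho_e)B$ to $(x_e)B$, giving $\mf{p}B_\mf{p}=(x_e)B_\mf{p}$ principal, so $B_\mf{p}$ is a DVR. For $S_2$ at a height-$\geq 2$ prime $\mf{p}\ni x_e$, prime avoidance in $B/(x_e)=(A/(x_e,\rho_e))[t]$ produces a second element of $\mf{p}$ missing all associated primes, extending $x_e$ to a regular sequence of length $2$. The main obstacle is the $R_1$ verification at primes containing $x_e$: in general the blowup of a normal scheme along a regularly immersed codimension-$2$ subscheme need not be normal, and the argument depends crucially on the squarefreeness of $1+m$ in the Laurent quotient and on the collapse $(x_e,\rho_e)B=(x_e)B$ coming from the exchange relation.
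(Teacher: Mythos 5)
Your proposal takes a genuinely different route from the paper, and the part you are honest about being shaky is precisely where the gap is.

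What you do correctly: the observation that Lemma~\ref{L:CR1coef}'s internal claim (with $z=a$, $w=\rho_e$) lets you swap $(\rho_e,x_e)$ to $(x_e,\rho_e)$ is a nice point, and the deduction that $B:=A[x_e']\cong A[t]/(x_et-\rho_e)$ by clearing denominators in $A_{x_e}[t]$ and using $\rho_e$ a nonzerodivisor modulo powers of $x_e$ is sound. So you obtain a presentation of $B$ as a hypersurface over $A$, and the first half of the $R_1$ check (primes $\mf p\not\ni x_e$) correctly reduces to normality of $A_{x_e}$.

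The gap is the $R_1$ check at height-$1$ primes $\mf p\ni x_e$. You assert that $\mf p\cap A$ is locally $(x_e,g)$ for an irreducible factor $g$ of $\rho_e=1+w$, and deduce that $A_{\mf p\cap A}$ is regular. This is not established: $A$ is normal but need not be regular in codimension $2$, and nothing guarantees that $\mf p\cap A$ survives to the Laurent localization $\mc L_{\Gamma^e}(\b x)$ (a priori it could contain a cluster variable $x_u$ for a mutable $u$ not adjacent to $e$). Even granting that, passing from local $2$-generation of $\mf P:=\mf p\cap A$ to $\mf p B_{\mf p}=(x_e)B_{\mf p}$ needs an argument that $\mf p=\mf P B_{\mf p}$, which is not automatic for a hypersurface extension. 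You also rely on squarefreeness of $1+w$, which can fail in positive characteristic when all arrow multiplicities into $e$ are divisible by $\operatorname{char}k$. The $S_2$ step is likewise only sketched. In short, the pointwise Serre verification forces you to control the singular locus of $B$ (equivalently, of $A$ along $V(x_e,\rho_e)$), and that control is exactly what is missing.

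The paper sidesteps all of this. It covers $\Spec B$ by the two principal opens $\{x_e\neq 0\}$ and $\{w\neq 0\}$ (disjoint complements since $x_ex_e'-w=1$), observes $B_{x_e}=A_{x_e}$ is normal, and on $\{w\neq 0\}$ uses Lemma~\ref{L:reduce2frozen} to freeze $\bs d=\{u\mid u\to e\}\cup\{e\}$ and rewrite $B_w$ as the UCA $\uca(\Gamma^{\bs d},\b x)_w[x_e']$. After freezing and localizing, this becomes an intersection of Laurent-polynomial rings over the explicit complete-intersection coefficient ring $R=k[\b x(\bs d),z]/(x_ez-1-w)$; normality of $R$ is checked directly (Jacobian $+$ Serre for a hypersurface), and normality of each Laurent ring and of their intersection then follows formally. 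The point is that the only singularity analysis is performed on the small, explicit ring $R$, never on $A$ or $B$ themselves — which is exactly what your pointwise approach cannot avoid. If you want to salvage your route, the cleanest fix is to restrict to $\{w\neq 0\}$ before attempting $R_1$ and $S_2$, and then import the paper's identification of $B_w$ with a UCA over $R$.
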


\begin{proof} Let $A=\uca({\Gamma}^e,\b{x})$ and $B=\uca({\Gamma},\b{x})$.
We have that $x_e' \in A_{x_e}$, so $A[x_e']_{x_e} = A_{x_e}$. Recall that every UCA is normal so $A[x_e']_{x_e}$ is normal. 
Let $w = \prod_{u\to e} x_u\in A$.
Consider 
$$A[x_e']_w = A_w[x_e'] = \uca({\Gamma}^{\bs{d}},\b{x})_w[x_e'],\ \text{ where $\bs{d}=\{u\mid u\to e\}\cup\{e\}$.}$$
The second equality is due to Lemma \ref{L:reduce2frozen} (we apply it for $\bs{e}=\{u\mid u\mapsto e\}$).
We notice that after freezing $\bs{d}$, we can view $\uca({\Gamma}^{\bs{d}},\b{x})_w[x_e']$ as a UCA with coefficients in $R:=k[\b{x}(\bs{d}),x_e']$.
To be more precise, 
\begin{equation} \label{eq:localw} \uca({\Gamma}^{\bs{d}},\b{x})_w[x_e'] = \bigcap_{({{\Gamma}^{\bs{d}}}',\b{x}')\sim ({\Gamma}^{\bs{d}},\b{x})} R\left[\b{x}'({\Gamma}_{\mu} \setminus \bs{d})^{\pm 1}\right].
\end{equation}
It is easy to see that $R$ is normal. 
Indeed, $R$ is a complete intersection because it can be presented as $R=k[\b{x}(\bs{d}),z]/(x_ez - 1-w)$.
We can easily check by the Jacobian criterion \cite[Theorem 16.9]{E} that $R$ is regular in codimension 1.
Then the normality of $R$ follows from Serre's criterion \cite[Theorem 11.5]{E}.
By \cite[Exercise 4.17]{E} any Laurent polynomial ring over $R$ is also normal.
As an intersection of normal domains, $A[x_e']_w$ is also normal.
Now to show $A[x_e']$ is normal, it suffices to show that the complements of zero sets $Z(x_e)$ and $Z(w)$ cover $\Spec(A[x_e'])$.
This follows from the exchange relation: $x_e x_e' - w =1$.
\end{proof}

\begin{proposition} \label{P:muext=} Let $e$ be a sink or a source of ${\Gamma}$.
Suppose that $({\Gamma}^e,\b{x})$ is CR1 and its UCA is Noetherian. Then
so is $({\Gamma},\b{x})$ and we have that
$$\uca({\Gamma},\b{x})=\uca({\Gamma}^e,\b{x})[{x}_e'].$$
\end{proposition}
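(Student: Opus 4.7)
Set $A = \uca(\Gamma^e, \b{x})$, $B = \uca(\Gamma, \b{x})$, and $R := A[x_e']$. The plan is to verify that $R$ is a Noetherian normal domain in which $(\Gamma, \b{x})$ is a CR1 seed, then apply Lemma \ref{L:RCA} to conclude $B \subseteq R$; the reverse inclusion $R \subseteq B$ will come from the Laurent phenomenon. Normality of $R$ is exactly what the preceding lemma established, and $R$ is Noetherian as a homomorphic image of the Noetherian polynomial ring $A[T]$.

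For $R \subseteq B$, the Laurent phenomenon gives $x_e' \in B$, and for each $u \in \Gamma_\mu \setminus \{e\}$ the exchange relation at $u$ in $\Gamma$ is identical to the one in $\Gamma^e$, so $x_u' \in A \subseteq B$. The containment $A \subseteq B$ is obtained by expanding $f \in A$ in each $\Gamma$-mutated seed: for seeds in the $\Gamma^e$-orbit this is immediate from $\mc{L}_{\Gamma^e}(\b{x}^\dag) \subseteq \mc{L}_\Gamma(\b{x}^\dag)$, and for seeds obtained by (iterated) mutation at $e$ the substitution $x_e = \rho_e/x_e'$ produces the required Laurent expansion.

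For $B \subseteq R$ via Lemma \ref{L:RCA}, the critical point is the coprimality of the pair $(x_e, x_e')$ in $R$, and I will establish it by showing that $(x_e, x_e')$ is a regular sequence. Since $R$ is a domain, $x_e$ is a nonzerodivisor. The exchange $x_e x_e' = \rho_e$ forces $\rho_e \equiv 0$ in $R/(x_e)$, and I will identify $R/(x_e) \cong (A/(x_e, \rho_e))[\bar{x}_e']$ as a genuine polynomial ring in $\bar{x}_e'$. Here the source/sink hypothesis is essential: $\rho_e$ equals $1 + \prod_{e \to w} x_w$ (source case) or $\prod_{v \to e} x_v + 1$ (sink case), hence has a unit constant term, so $\bar{x}_e'$ really appears as a free polynomial variable rather than being pinned down by the relation. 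The polynomial variable is always a nonzerodivisor, completing the regular sequence, and the grade-height inequality then yields $\operatorname{ht}(x_e, x_e') \geq 2$ in $R$, i.e.\ coprimality in codimension 1. The remaining coprimality assertions — for pairs inside $\b{x}$ and pairs $(x_u, x_u')$ with $u \neq e$ — inherit from the CR1 of $(\Gamma^e, \b{x})$ in $A$ via $R_{x_e} = A_{x_e}$: a height-1 prime of $R$ missing $V(x_e)$ descends to a height-1 prime of $A$ containing the given pair, contradicting CR1 there; a height-1 prime containing $x_e$ is controlled by the explicit description of $R/(x_e)$. Once CR1 is in hand, Lemma \ref{L:RCA} gives $B \subseteq R$, hence $B = R$, and the Noetherianity and CR1 of $(\Gamma, \b{x})$ in $B$ follow at once.

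The main obstacle is this last step, namely transferring codim-1 coprimality from $A$ to $R$ across the extension $A \hookrightarrow R$, which is not flat in general, so prime heights need not be preserved and one must analyze primes of $R$ lying above $V(x_e) \subset \Spec A$ with some care. The identification $R/(x_e) \cong (A/(x_e, \rho_e))[\bar{x}_e']$ is also somewhat delicate since $R$ is realized as a subring of $\operatorname{Frac} A$ rather than as an explicit quotient of $A[T]$; verifying that the kernel of $A \to R/(x_e)$ is exactly $(x_e, \rho_e)$ relies crucially on the unit-constant-term observation afforded by $e$ being a sink or a source.
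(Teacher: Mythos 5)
Your overall route is the same as the paper's — verify that $(\Gamma,\b{x})$ is a CR1 seed in $R=A[x_e']$ and invoke Lemma~\ref{L:RCA} — and your treatment of the pair $(x_e,x_e')$ via the identification $R/(x_e)\cong \bigl(A/(x_e,\rho_e)\bigr)[\bar x_e']$ is essentially correct (the key input, that $\rho_e$ is a nonzerodivisor modulo $x_e$, is exactly Lemma~\ref{L:CR1coef}, and it lets one show by descending induction on $\deg_T$ that the kernel of $A[T]\twoheadrightarrow R/(x_e)$ is the extension of $(x_e,\rho_e)$). However, there is a real gap in your handling of the remaining pairs. You dispose of the case $x_e\notin Q$ via $R_{x_e}=A_{x_e}$, but for a height-$1$ prime $Q\ni x_e$ you only say this is ``controlled by the explicit description of $R/(x_e)$'' without an argument. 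Making that precise requires showing that no minimal prime of $(x_e,\rho_e)\subset A$ (necessarily of height $2$, corresponding bijectively to the height-$1$ primes of $R$ over $(x_e)$) contains another $x_v$, i.e.\ that $\operatorname{ht}_A(x_e,\rho_e,x_v)\geq 3$. This does not follow from CR1, which is only a pairwise codimension-$1$ statement: three pairwise-coprime elements in a Noetherian domain need not generate an ideal of height $3$. The paper avoids this by a different device: observing that $Q\ni x_e$ forces $Q\not\ni w=\prod_{u\to e}x_u$ (from the exchange relation $x_ex_e'=1+w$), one localizes at $w$, freezes all $u\to e$ together with $e$, and then $x_e$ sits inside the polynomial coefficient ring $k[\b{x}(\bs d),x_e']_w$ of the localized UCA, where coprimality with $x_v$ is accessible by a Lemma~\ref{L:CR1coef}-type argument. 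Your proposal omits this localization at $w$, and without it the case $Q\ni x_e$ does not close.

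A secondary issue is your in-line sketch of $A\subseteq B$. Substituting $x_e=\rho_e/x_e'$ handles the single mutation $\mu_e$, but for a general sequence $\bs\mu$ containing mutations at $e$ interleaved with mutations at other vertices, the intermediate seed after the first $\mu_e$ is neither a $\Gamma^e$-seed (so Laurentness of $f\in A$ there is not given) nor easily reduced to one, and Laurentness after the subsequent mutations is not automatic. The paper sidesteps this by citing the proof of \cite[Lemma 2.4]{Fs2}; if you want to prove it in-line you would need something like Theorem~\ref{T:bounds} to reduce to single mutations, together with a careful analysis of mutation at $e$.
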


\begin{proof} We recall the notation that $A=\uca({\Gamma}^e,\b{x})$ and $B=\uca({\Gamma},\b{x})$.   %\marginpar{consider change $B$?}
	It is showed in the proof of \cite[Lemma 2.4]{Fs2} that $A\subseteq B$.
	%	$$\uca({\Gamma},\b{x})\supseteq \uca({\Gamma}^e,\b{x}).$$
	Since ${x}_e'\in \uca({\Gamma},\b{x})$, we proved ``$\supseteq$". 
	
	To show the other direction, we are going to apply Lemma \ref{L:RCA}.
	%$$\uca({\Gamma},\b{x})\subseteq \mc{L}_{\Gamma}(\b{x}_u)[x_e,x_e']\ \text{ for $u\in \Gamma_\mu \cup \{\circ \} $.}$$
	We already have that $x_u, x_u'\in \uca({\Gamma}^e,\b{x})[{x}_e']$ for any $u\in {\Gamma}_\mu$.
	It remains to verify the coprime condition (Definition \ref{D:CR1}.(2)) for the pairs $(x_u,x_v)$ and $(x_u,x_u')$ in $A[x_e']$.
	Since the quotient fields of $A$ and $A[x_e']$ have the same transcendence degree, $A$ and $A[x_e']$ have the same Krull dimension.
    After localizing at $x_e$, $A_{x_e} \hookrightarrow A_{x_e}[x_e']$ is an integral extension by the exchange relation of $x_e$.
    Our assumption says that $({\Gamma}^e,\b{x})$ is CR1 in $A$. 
    It follows from the going-up property (see also \cite[Proposition 9.2]{E}) that the coprime condition holds for those pairs with $x_u\neq x_e$. 
    Hence it suffices to look at the pairs $(x_e,x_v)$ and $(x_e,x_e')$.
	
	We note that for any $u\to e$, $x_u$ and $x_e$ cannot both vanish due to the exchange relation.
	So we localize at $w= \prod_{u\to e} x_u$ as before. 
	Since $x_e$ is in the coefficient ring $R$ (see \eqref{eq:localw}),
	the codimension 2 statement follows easily.
	For $(x_e,x_e')$, we observe that $A[x_e']/(x_e,x_e')=A/I$ where $I$ is an ideal containing $x_e$ and $1+w$.
	By Lemma \ref{L:CR1coef}, the common zero locus of $(x_e,1+w)$ has codimension 2 in $\mc{A}$.
	Therefore $(x_e,x_e')$ is coprime in codimension 1 in $A[x_e']$.
\end{proof}

We now come back to the setting at the beginning of this subsection. We assume that $B(\Delta)$, and thus $B(\ul{\Delta}^e)$, has full rank so that the conditions in Proposition \ref{P:muext=} are satisfied for $\Gamma=\ul{\Delta}$.
\begin{corollary} \label{C:gind} %We assume the conditions in Proposition \ref{P:muext=}. % Suppose that $B(\ul{\Delta}^e)$ has full rank. 
If $\uca(\ul{\Delta}^e)$ is g-indexed by $G(\ul{\Delta}^e)$, then $\uca(\ul{\Delta})$ is g-indexed by $G(\ul{\Delta}^e)+\mb{Z}_{\geq 0}\g(S_e)$ where $\g(S_e)$ is the $\g$-vector of the simple representation at $e$. 
If $\uca(\Delta)$ is g-indexed by $G(\Delta)$ and $e$ is a sink, then $\uca(\ul{\Delta})$ is g-indexed by $G(\Delta)+\mb{Z}\e_e$.
\end{corollary}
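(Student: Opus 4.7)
The plan is to combine Proposition \ref{P:muext=} with a careful $k$-vector space direct-sum decomposition of $\uca(\ul{\Delta})$. The standing assumption that $B(\ul{\Delta})$ has full rank ensures the hypotheses of Proposition \ref{P:muext=} hold (with $e$ a sink or source of $\ul{\Delta}$), giving
\[
\uca(\ul{\Delta})=\uca(\ul{\Delta}^e)[x_e'], \qquad x_e x_e' = p \in \uca(\ul{\Delta}^e),
\]
where $p$ is the sum of two monomials one of which is $1$ (the empty product from the sink/source structure), so $p$ is coprime to $x_e$. Expanding $x_e'$ as a Laurent polynomial using the exchange relation yields $\g(x_e') = \g(S_e)$.

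For Part 1, I first establish the $k$-vector space decomposition
\[
\uca(\ul{\Delta}) = \uca(\ul{\Delta}^e) \;\oplus\; \bigoplus_{n \geq 1} C \cdot (x_e')^n,
\]
where $C$ is any $k$-complement of $x_e \uca(\ul{\Delta}^e)$ inside $\uca(\ul{\Delta}^e)$. Spanning follows by iteratively applying $x_e(x_e')^n = p(x_e')^{n-1}$ to absorb factors of $x_e$ from the $(x_e')^n$-coefficients for $n \geq 1$. For directness, if $a + \sum_{n \geq 1} c_n (x_e')^n = 0$ with $c_n \in C$, then clearing denominators yields $a x_e^N + \sum_{n} c_n p^n x_e^{N-n} = 0$ in $\uca(\ul{\Delta}^e)$; reducing modulo $x_e$ forces $c_N p^N \equiv 0$, so since $p$ is a unit modulo $x_e$ in the integral domain $\uca(\ul{\Delta}^e)$, we obtain $c_N \in x_e \uca(\ul{\Delta}^e) \cap C = 0$, and downward induction finishes the job. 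Since $x_e b_{\g}$ has $g$-vector $\g + \e_e$ in the $g$-indexed basis $\{b_{\g}\}$, it must be a nonzero scalar multiple of $b_{\g + \e_e}$; hence $x_e\uca(\ul{\Delta}^e) = \mathrm{span}_k\{b_{\g'}: \g' \in G(\ul{\Delta}^e) + \e_e\}$, and the natural choice $C := \mathrm{span}_k\{b_{\g}: \g \in C^*\}$ with $C^* := G(\ul{\Delta}^e) \setminus (G(\ul{\Delta}^e) + \e_e)$ yields the basis
\[
\{b_{\g}\}_{\g \in G(\ul{\Delta}^e)} \;\cup\; \{b_{\g}(x_e')^n\}_{\g \in C^*,\, n \geq 1}
\]
of $\uca(\ul{\Delta})$, whose elements have $g$-vectors of the form $\g + n\g(S_e)$ ranging over $G(\ul{\Delta}^e) + \mb{Z}_{\geq 0}\g(S_e)$.

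Part 2 reduces to Part 1 via a preliminary frozen-extension step. From Lemma \ref{L:frext=} together with the observation that $\uca(\ul{\Delta}^e) \subset \mc{L}_{\ul{\Delta}^e}(\ul{\b{x}})$ is polynomial in the frozen variable $x_e$, one obtains $\uca(\ul{\Delta}^e) = \uca(\Delta)[x_e]$, and the basis $\{b_{\g}x_e^n\}_{\g \in G(\Delta),\, n \geq 0}$ upgrades the $g$-indexing of $\uca(\Delta)$ by $G(\Delta)$ to a $g$-indexing of $\uca(\ul{\Delta}^e)$ by $G(\Delta) + \mb{Z}_{\geq 0}\e_e$. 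Applying Part 1 with $\g(S_e) = -\e_e$ (as guaranteed by the sink hypothesis) then gives $g$-indexing of $\uca(\ul{\Delta})$ by $(G(\Delta) + \mb{Z}_{\geq 0}\e_e) + \mb{Z}_{\geq 0}(-\e_e) = G(\Delta) + \mb{Z}\e_e$.

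The main obstacle is verifying that the $g$-vectors of the basis elements are distinct and exhaust the stated index set. The key combinatorial input is the unique disjoint decomposition $G(\ul{\Delta}^e) = \bigsqcup_{k \geq 0}(C^* + k\e_e)$, which records the ``$x_e$-divisibility'' of basis elements. Combined with the fact that $\g(S_e)$ has $e$-component $-1$ in both the sink case ($\g(S_e) = -\e_e$) and the source case ($\g(S_e) = -\e_e + \sum_{e \to v}\e_v$), this separates the $g$-vectors arising from the $n = 0$ and $n \geq 1$ pieces of the basis and ensures both distinctness and surjection onto $G(\ul{\Delta}^e) + \mb{Z}_{\geq 0}\g(S_e)$.
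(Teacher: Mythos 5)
Your overall plan matches the paper's (Proposition~\ref{P:muext=} for the first statement; reduction to the first via Lemma~\ref{L:frext=} for the second), but there is a genuine gap in Part~2 and two smaller issues in Part~1. The Part~2 claim $\uca(\ul{\Delta}^e)=\uca(\Delta)[x_e]$ does \emph{not} follow from Lemma~\ref{L:frext=}, which only gives equality after inverting $x_e$, and it is false in general: the image of $\uca(\Delta)$ under the balanced-extension identification need not be polynomial in $x_e$. Concretely, take $\ul{\Delta}\colon 2\to 1\to e$ with $2$ frozen, $1,e$ mutable, and $\Delta\colon 2\to 1$; then $B(\Delta)=(0,-1)$ has full rank, \eqref{eq:BTheta} forces $\Theta(2)=1$, so $\b{x}(2)=\ul{x}_2/\ul{x}_e$ and hence $\b{x}(1)'=(\b{x}(2)+1)/\b{x}(1)=\ul{x}_1'/\ul{x}_e\notin\uca(\ul{\Delta}^e)$, even though it lies in $\uca(\Delta)\subset\uca(\Delta)[x_e]$. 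What is true --- and what the paper actually records --- is only that $\uca(\ul{\Delta}^e)$ is $g$-indexed by a subset of the form $H=\{(\g,h):\g\in G(\Delta),\,h\geq h(\g)\}$ with a $\g$-dependent lower bound $h(\g)$ (upward-closedness in $h$ plus non-emptiness of each fibre). Your final answer survives because $H-\mb{Z}_{\geq 0}\e_e=G(\Delta)+\mb{Z}\e_e$ for any such $H$, but the step as you wrote it is not sound.

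In Part~1, the element $p=1+w$ is not a \emph{unit} modulo $x_e$ in $\uca(\ul{\Delta}^e)$ (it is even $0$ modulo $x_e$ in the larger ring $\uca(\ul{\Delta})$). What you actually need is that $p$ is a \emph{non-zero-divisor} modulo $x_e$, which is true and follows from the claim proved inside Lemma~\ref{L:CR1coef} applied to $p$ (noting $x_e\nmid p$ in $\mc{L}_{\ul{\Delta}^e}(\ul{\b{x}})$). Separately, the assertion that $x_eb_\g$ ``must be a nonzero scalar multiple of $b_{\g+\e_e}$'' is not a consequence of $g$-indexing: the degree argument after Lemma~\ref{L:independent} only gives $x_eb_\g=b_{\g+\e_e}+(\text{strictly lower total degree})$. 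The intended conclusion, that $\{b_\g\}_{\g\in C^*}$ is a $k$-complement of $x_e\uca(\ul{\Delta}^e)$, does hold (the two families $\{x_eb_\g\}_{\g\in G}$ and $\{b_\g\}_{\g\in C^*}$ have distinct $g$-vectors exhausting $G(\ul{\Delta}^e)$, so directness is Lemma~\ref{L:independent}; spanning requires a filtration comparison), but it needs more than the unjustified ``scalar multiple'' shortcut. The rest of Part~1 --- the direct-sum decomposition, the distinctness analysis via the partition $G(\ul{\Delta}^e)=\bigsqcup_{k\geq 0}(C^*+k\e_e)$ using that $G(\ul{\Delta}^e)$ is closed under $+\e_e$ --- is a sensible and essentially correct fleshing-out of what the paper leaves to the reader.
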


\begin{proof} We observe that $\g(S_e)$ is the same as the $\g$-vector of $x_e'$. The first statement follows from Proposition \ref{P:muext=}.
For the second statement, we see from Lemmas \ref{L:frext=} and \ref{L:xy} that $\uca(\ul{\Delta}^e,\ul{\b{x}})_{\ul{x}_e}$ is g-indexed by $G(\Delta)+\mb{Z}\e_e$.
It follows that $\uca(\ul{\Delta}^e)$ is g-indexed by some subset 
$$H:=\{(\g,h)\mid \g\in G(\Delta),\ h\geq h(\g)\ \text{ for some $h(\g)$} \}.$$
When $e$ is sink, $\g(S_e)=-\e_e$, then we conclude from the first statement that $\uca(\ul{\Delta})$ is g-indexed by $H-\mb{Z}_{\geq  0}\e_e =G(\Delta)+\mb{Z}\e_e$.
\end{proof}

%The following lemma motivates Corollary \ref{C:musink}. Using this lemma, we can prove Corollary \ref{C:musink} in a similar way as Lemma \ref{L:sink_model}.
%However, we will give another simpler proof so the next lemma is not needed.
\begin{lemma}  \label{L:musink} Suppose that $e$ is a mutable sink of $\ul{\Delta}$.
	\begin{enumerate}
		\item $G(\ul{\Delta},W) = G(\Delta,W)+\mb{Z}\e_e$; 
		\item $C_{W}(\g-h\e_e) = C_W(\g){x_e'}^{h}$ for any $h\geq 0$.
	\end{enumerate}
\end{lemma}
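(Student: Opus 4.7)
The plan exploits the fact that a sink decouples in the Jacobian algebra. Since $e$ is a sink of $\ul{\Delta}$, no oriented cycle passes through $e$, so the potential $\ul{W}$ lies entirely in the path algebra of $\Delta$ and equals $W$. Hence $\ul{J} := J(\ul{\Delta}, W)$ is obtained from $J := J(\Delta, W)$ by freely adjoining the arrows $u \to e$, which impose no new Jacobian relations. Three consequences are crucial: $\ul{P}_e = S_e$ is simple; $\Hom_{\ul{J}}(\ul{P}_u, \ul{P}_v) = \Hom_J(P_u, P_v)$ for $u, v \in \Delta_0$ (paths between $\Delta$-vertices avoid the sink); and $\Hom_{\ul{J}}(\ul{P}_v, S_e) = 0$ for $v \in \Delta_0$. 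Every $J$-module $M$ admits a natural ``sink extension'' to a $\ul{J}$-module $\tilde M$ with $\tilde M|_\Delta = M$ and $\tilde M_e = \bigoplus_{u \to e} M_u$, on which each new arrow acts as the canonical inclusion.

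For (1), set $\ul{\g} = (\g, h)$ and decompose a general morphism in $\PHom_{\ul{J}}(\ul{\g})$ into blocks according to whether source and target summands come from $\ul{P}_\Delta([\pm\g]_+)$ or from $\ul{P}_e^{[\pm h]_+}$. The vanishing statements above force the block from the $\Delta$-part into $S_e$ to be zero, and splitting the cases $h \geq 0$ and $h \leq 0$, one verifies that $\Coker(\ul{\g})|_\Delta = M := \Coker_J(\g)$ independently of $h$. Since $e$ itself is mutable in $\ul{\Delta}$, the $\mu$-support of $\Coker(\ul{\g})$ in $\ul{\Delta}$ reduces to the $\mu$-support of $M$ in $\Delta$, yielding $G(\ul{\Delta}, W) = G(\Delta, W) + \mb{Z}\e_e$.

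For (2), take $h \geq 0$ and apply the analysis above to $\ul{\g} = (\g, -h)$: the extra $\ul{P}_e^h = S_e^h$ in the target splits off as a direct summand, so $\Coker((\g, -h)) = \tilde M \oplus S_e^h$. I would then compute $\chi\bigl(\Gr^{(\e, f)}(\tilde M \oplus S_e^h)\bigr)$ via the natural fibration onto $\Gr^\e(M)$, whose fiber over a quotient $N$ of $M$ is the ordinary Grassmannian of $f$-dimensional quotients of a space of dimension $\sum_{u\to e} \e(u) + h$, giving $\chi(\Gr^\e(M)) \binom{\sum_{u\to e} \e(u) + h}{f}$. Substituting into \eqref{eq:genCC}, invoking Lemma \ref{L:xy} to identify $\ul{y}_u = y_u$ for $u \in \Delta_\mu$ and computing $\ul{y}_e = \prod_{u\to e} \ul{x}_u$, and summing over $f$, one extracts a factor $(1 + \ul{y}_e)^{\sum_{u\to e}\e(u) + h}$. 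Pulling out $\ul{x}_e^{-h}(1+\ul{y}_e)^h$ yields $C_W((\g, -h)) = C_W((\g, 0)) \cdot \ul{x}_e^{-h}(1 + \ul{y}_e)^h = C_W((\g, 0)) (x'_e)^h$, where the last equality is the exchange relation at the sink $e$.

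The main obstacle is establishing the precise module-theoretic description of $\Coker(\ul{\g})$ via the sink extension together with the accompanying Grassmannian fibration; once these structural identifications are in hand, both parts follow by direct substitution and binomial expansion.
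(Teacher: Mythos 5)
Your proof is correct, and for part (2) it takes a genuinely different route from the paper. The paper's proof is a one-liner: since $e$ is a sink, a general presentation of weight $\g-h\e_e$ over $\ul J$ decomposes as a direct sum of a general presentation of weight $\g$ and $h$ copies of $0\to \ul{P}_e = S_e$; then multiplicativity of the cluster character on direct sums (\cite[Proposition~3.2]{DWZ2}) gives $C_W(\g-h\e_e)=C_W(\g)\,C_W(-\e_e)^h = C_W(\g)\,(x_e')^h$ immediately. You instead unwind the definition of the generic character: you identify $\Coker(\g,-h)$ with the sink extension $\tilde M$ plus $S_e^h$, fibre $\Gr^{(\e,f)}(\tilde M\oplus S_e^h)$ over $\Gr^\e(M)$ with fibre a Grassmannian of constant dimension, and extract the factor $(1+\ul{y}_e)^h$ by a binomial expansion. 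The constancy of the fibre follows because the arrows $u\to e$ act injectively in $\tilde M$, so the image of $\bigoplus_{u\to e}K_u$ has dimension $\sum_{u\to e}(\dim M_u-\e(u))$, which is independent of the chosen point of $\Gr^\e(M)$ — a point worth making explicit. The paper's argument is slicker and cites a general multiplicativity fact; yours is more self-contained and gives a concrete module-theoretic picture of the cokernel that the paper leaves implicit. For part (1) both arguments amount to the same observation (the paper just declares it ``straightforward''): the added summands $S_e$ contribute only to the $e$-component of the cokernel, and $e$ is mutable, so $\mu$-support is unaffected.
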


\begin{proof} (1) is straightforward. Since $e$ is a sink, %$\g-h\e_e$ canonically decomposes as $\g \oplus (-h\e_e)$, meaning that
a general presentation of weight $\g-h\e_e$ is a direct sum of a general presentation of weight $\g$ and $h$ copies of $P_e=S_e$.
By \cite[Proposition 3.2]{DWZ2} we have that $$C_{W}(\g-h\e_e) = C_W(\g)C_W(-\e_e)^{h} = C_W(\g){x_e'}^{h}.$$
\end{proof}

\begin{corollary} \label{C:musink} Let $e$ be a mutable sink of $\ul{\Delta}$. We assume that $B(\Delta)$ has full rank and $\uca(\Delta)$ is Noetherian.
If $(\Delta,W)$ is a (polyhedral) cluster model, then so is $(\ul{\Delta},W)$.
%Then $(\ul{\Delta},W)$ a (polyhedral) cluster model if and only if so is $(\Delta,W)$.
\end{corollary}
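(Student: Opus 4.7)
The plan is to use $(\ul{\Delta}^e, W)$, with $e$ treated as frozen, as a bridge between $(\Delta, W)$ and $(\ul{\Delta}, W)$. First, since $B(\Delta)$ has full rank, \eqref{eq:BTheta} admits a solution, and $(\ul{\Delta}^e, \ul{\b{x}})$ can be realized as a balanced extension of $(\Delta, \b{x})$ via $\ul{\b{x}}(e)$ with some $\ul{\Theta}$. Because $e$ is a sink of $\ul{\Delta}$, it is still a sink of $\ul{\Delta}^e$, so Lemma \ref{L:sink_model} immediately upgrades the cluster-model property from $(\Delta, W)$ to $(\ul{\Delta}^e, W)$ (with polyhedrality preserved).

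Next I would invoke Proposition \ref{P:muext=} with $\Gamma = \ul{\Delta}$ to obtain
\[
\uca(\ul{\Delta}, \ul{\b{x}}) = \uca(\ul{\Delta}^e, \ul{\b{x}})[\ul{x}_e'],
\]
the CR1 hypothesis coming from Lemma \ref{L:CR1} and the full rank of $B(\ul{\Delta}^e)$ (inherited from that of $B(\Delta)$). The Noetherian hypothesis on $\uca(\ul{\Delta}^e)$ is propagated from that of $\uca(\Delta)$ by means of Lemma \ref{L:frext=} (giving $\uca(\ul{\Delta}^e, \ul{\b{x}})_{\ul{x}_e} = \uca(\Delta, \b{x})[\ul{x}_e^{\pm 1}]$) combined with the $\ul{\Theta}$-grading and the cluster-model basis just produced. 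Granting both, any $t \in \uca(\ul{\Delta}, \ul{\b{x}})$ decomposes as $t = \sum_{h \geq 0} s_h (\ul{x}_e')^h$ with $s_h \in \uca(\ul{\Delta}^e, \ul{\b{x}})$; expanding each $s_h = \sum_i a_{h,i} C_W(\ul{\g}_{h,i})$ with $\ul{\g}_{h,i} \in G(\ul{\Delta}^e, W)$ and applying Lemma \ref{L:musink}(2) gives
\[
t = \sum_{h, i} a_{h,i}\, C_W(\ul{\g}_{h,i} - h \e_e).
\]
By Lemma \ref{L:musink}(1), every such index lies in $G(\ul{\Delta}, W) = G(\Delta, W) + \mb{Z}\e_e$; together with the linear independence supplied by Theorem \ref{T:GCC} this realizes $\{C_W(\g)\}_{\g \in G(\ul{\Delta}, W)}$ as a basis of $\uca(\ul{\Delta})$, so $(\ul{\Delta}, W)$ is a cluster model. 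Polyhedrality is automatic since $G(\ul{\Delta}, W)$ is a $\mb{Z}$-cylinder over $G(\Delta, W)$.

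The main obstacle will be the Noetherianness step: I need a clean argument that $\uca(\ul{\Delta}^e)$ is Noetherian given that $\uca(\Delta)$ is, without circularly invoking Proposition \ref{P:muext=}. Everything else amounts to bookkeeping with the two identities of Lemma \ref{L:musink} and the g-indexing produced by Corollary \ref{C:gind}.
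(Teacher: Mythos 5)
Your proposal is correct in outline but takes a genuinely different route from the paper's, and the comparison is instructive.

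The paper's argument is much shorter and works entirely at the level of $\g$-vector sets: it invokes Corollary \ref{C:gind} to conclude that $\uca(\ul{\Delta})$ is g-indexed by $G(\Delta)+\mb{Z}\e_e$, uses the hypothesis $G(\Delta)=G(\Delta,W)$ and Lemma \ref{L:musink}(1) to identify this set with $G(\ul{\Delta},W)$, and then sandwiches $G(\ul{\Delta},W)\supseteq G(\ul{\Delta})\supseteq G(\ul{\Delta},W)$. The conclusion then follows from linear independence of the generic characters (Theorem \ref{T:GCC}) together with the leading-term argument that makes any linearly independent family of generic characters realizing all of $G(\ul{\Delta})$ automatically a basis. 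No explicit expansion $t=\sum_{h,i}a_{h,i}C_W(\ul{\g}_{h,i}-h\e_e)$ appears, and Lemma \ref{L:sink_model} and Lemma \ref{L:musink}(2) are not used at all in this proof. Your route is more hands-on: you upgrade to $(\ul{\Delta}^e,W)$ via Lemma \ref{L:sink_model}, then use Proposition \ref{P:muext=} to write $\uca(\ul{\Delta})=\uca(\ul{\Delta}^e)[\ul{x}_e']$, then expand in the generic-character basis and push down via Lemma \ref{L:musink}(2). This buys explicitness at the cost of one extra care: the proof of Lemma \ref{L:musink}(2) writes the general presentation of $\g-h\e_e$ as a direct sum with $h$ copies of $0\to S_e$, which is clean when $\g(e)=0$; your $\ul{\g}_{h,i}\in G(\ul{\Delta}^e,W)$ can have $\ul{\g}_{h,i}(e)>0$, so you would need to verify (an easy but non-automatic reduction step) that cancelling $P_e\to P_e$ summands does not alter the cokernel. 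The paper avoids this entirely by not expanding.

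On the Noetherian issue you flagged: your concern is legitimate, but it is not special to your route. The paper's proof also ultimately relies on Proposition \ref{P:muext=} for $\Gamma=\ul{\Delta}$ (Corollary \ref{C:gind}(1) cites it explicitly, and (2) is deduced from (1)), which requires $\uca(\ul{\Delta}^e,\b{x})$ to be Noetherian. The sentence preceding Corollary \ref{C:gind} asserts the hypotheses of Proposition \ref{P:muext=} are met on the strength of the full-rank condition alone, and does not address the Noetherian hypothesis for $\uca(\ul{\Delta}^e)$; Lemma \ref{L:frext=} only gives Noetherianness of the localization $\uca(\ul{\Delta}^e)_{\ul{x}_e}$, which does not formally descend. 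So the same implicit assumption sits in the paper's argument. You correctly identified a point that requires either an extra argument or a strengthened hypothesis, and it applies equally to both proofs.
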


\begin{proof} By Corollary \ref{C:gind}, $G(\ul{\Delta})=G(\Delta)+\mb{Z}\e_e$.
Since  $(\Delta,W)$ is a cluster model, $G(\Delta)=G(\Delta,W)$.
We have that $G(\ul{\Delta},W)\supseteq G(\Delta,W)+\mb{Z}\e_e$ and $G(\ul{\Delta})\supseteq G(\ul{\Delta},{W})$.
Putting these (in)equalities together, we get that $G(\ul{\Delta},{W}) \supseteq G(\ul{\Delta})\supseteq G(\ul{\Delta},{W})$.
So $G(\ul{\Delta})= G(\ul{\Delta},{W})$. Hence, $(\ul{\Delta},W)$ is a cluster model by Corollary \ref{C:gind}.
\end{proof}

%\begin{proof} By Proposition \ref{P:muext=} we can write any $t\in \uca(\ul{\Delta},\b{x})$ as
%$t=\sum s_d {x_e'}^d$ for $s_d\in \uca(\ul{\Delta}^e,\b{x})$ and $d\in \mb{Z}_{\geq 0}$.
%So it suffices to show $s {x_e'}^d\in \op{Span}(C_{W})$ for any $s\in \uca(\ul{\Delta}^e,\b{x})$ and $d\in \mb{Z}_{\geq 0}$.
%The full rank condition of $B(\ul{\Delta})$ implies that of $B(\Delta)$, so $(\ul{\Delta}^e,W)$ is a cluster model by Lemma \ref{L:sink_model}.
%We can write $s$ as a sum $s=\sum_{i} a_i C_W(\g_i)$ for some $a_i\in k$ and $\g_i\in G(\ul{\Delta}^e,W)$.
%By Lemma \ref{L:musink}.(2), $C_W(\g_i-d\e_e) = C_W(\g_i){x_e'}^d$.
%We have that $s=\sum_{i} a_i C_W(\g_i-d\e_e)$, proving the claim by Lemma \ref{L:musink}.(1). 
%\end{proof}

More generally, we should allow $\bs{e}$ to contain more than one vertex.
The proof of this more general case is exactly the same as that of Theorem \ref{T:model_frext}.
Combining this with Theorem \ref{T:model_frext}, we get the following theorem where $\bs{e}$ can contain both frozen and mutable vertices.
\begin{theorem} \label{T:model_muext} 
Let $\ul{W}$ be any potential on $\ul{\Delta}$ such that its restriction on $\Delta$ is $W$.
Suppose that $B(\Delta)$ has full rank, $\uca(\Delta)$ is Noetherian, and each vertex in $\bs{e}$ can be optimized in $(\ul{\Delta},\ul{W})$.
If $(\Delta,W)$ is a (polyhedral) cluster model, then so is $(\ul{\Delta},\ul{W})$.
\end{theorem}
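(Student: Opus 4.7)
The plan is to prove the theorem by induction on the cardinality of $\bs{e}$, extending by a single vertex at a time. The base case $\bs{e}=\emptyset$ is trivial. For the inductive step, enumerate $\bs{e}=\{e_1,\dots,e_n\}$ and let $(\Delta_k,\b{x}_k,W_k)$ denote the restriction of $(\ul{\Delta},\ul{\b{x}},\ul{W})$ to the full subquiver on $\Delta_0\sqcup\{e_1,\dots,e_k\}$. Assuming $(\Delta_k,W_k)$ is a (polyhedral) cluster model, I aim to deduce the same for $(\Delta_{k+1},W_{k+1})$; iterating $n$ times yields the theorem. Since $B(\Delta)$ has full rank, the solvability discussion around \eqref{eq:BTheta} ensures that at each step we may realize $(\Delta_{k+1},\ul{\b{x}}\mid_{(\Delta_{k+1})_0})$ as a balanced extension of $(\Delta_k,\b{x}_k)$ via $\ul{\b{x}}(e_{k+1})$.

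At the $(k{+}1)$-st step, the vertex $e_{k+1}$ is either frozen or mutable in $\ul{\Delta}$. In either case, the optimization hypothesis furnishes an admissible mutation sequence $\bs{\mu}$, away from $e_{k+1}$, such that $e_{k+1}$ becomes a sink or source of $\bs{\mu}(\Delta_{k+1})$. Because being a cluster model is mutation-invariant (see the remark after Definition \ref{D:model}), $\bs{\mu}(\Delta_k,W_k)$ remains a cluster model; and since $\bs{\mu}$ does not touch $e_{k+1}$, the restriction of $\bs{\mu}(W_{k+1})$ to $\bs{\mu}(\Delta_k)$ equals $\bs{\mu}(W_k)$. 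If $e_{k+1}$ is frozen, Lemma \ref{L:sink_model} promotes the cluster-model property from $\bs{\mu}(\Delta_k,W_k)$ to $\bs{\mu}(\Delta_{k+1},W_{k+1})$; if $e_{k+1}$ is mutable, Corollary \ref{C:musink} does the same, after observing that the sink case therein has a direct source-case analogue obtained by passing to the opposite quiver and invoking the dual of \eqref{eq:genCC}. Mutation-invariance applied to $\bs{\mu}^{-1}$ then transfers the conclusion back to $(\Delta_{k+1},W_{k+1})$, closing the induction.

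The main obstacle is maintaining the hypotheses of the lemmas throughout the induction. Concretely, Lemma \ref{L:sink_model} presupposes that the balanced extension equation \eqref{eq:BTheta} is solvable at each stage, and Corollary \ref{C:musink} requires full rank of $B(\Delta_k)$ plus Noetherianity of $\uca(\Delta_k)$. Full-rank preservation is straightforward since appending a column to a full-rank $B$-matrix keeps the relevant submatrix full-rank; the existence of $\ul{\Theta}$ for each intermediate extension then follows from the discussion around \eqref{eq:BTheta}. Noetherianity of the intermediate UCAs is the more delicate point: after mutating so that $e_{k+1}$ is a sink or source, Proposition \ref{P:muext=} identifies $\uca(\bs{\mu}(\Delta_{k+1}))$ with a polynomial extension $\uca(\bs{\mu}(\Delta_{k+1})^{e_{k+1}})[x_{e_{k+1}}']$, and one applies Lemma \ref{L:frext=} (together with standard localization arguments) to propagate the Noetherian property from $\uca(\Delta_k)$ to $\uca(\Delta_{k+1})$ in the mutable case, while the frozen case is immediate since adjoining a coefficient variable preserves Noetherianity. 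Once these technical points are arranged, the proof is a clean combination of the two cases already handled in Section \ref{ss:extfr} and Section \ref{ss:extmu}.
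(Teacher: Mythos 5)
Your proof follows the paper's approach exactly: the paper derives Theorem \ref{T:model_muext} by the same single-vertex induction used for Theorem \ref{T:model_frext}, invoking Lemma \ref{L:sink_model} when $e_{k+1}$ is frozen and Corollary \ref{C:musink} (with the source analogue obtained via the opposite quiver) when $e_{k+1}$ is mutable, and relying on mutation-invariance of the cluster-model property to pass through the optimizing sequence. One caution on your final paragraph: the claim that ``adjoining a coefficient variable preserves Noetherianity'' is not a consequence of Lemma \ref{L:frext=} alone — that lemma only identifies a localization $\uca(\ul{\Delta},\ul{\b{x}})_{\ul{\b{x}}(\bs{e})}$ with a Laurent extension, and a ring need not be Noetherian just because one of its localizations is; similarly, invoking Proposition \ref{P:muext=} in the mutable step presupposes Noetherianity of $\uca(\bs{\mu}(\Delta_{k+1})^{e_{k+1}})$, which reduces to the same frozen-vertex question. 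The paper itself leaves this propagation implicit (it simply says the proof ``is exactly the same'' as the frozen case), so you are in good company, but it is worth noting that this is not as immediate as your write-up suggests.
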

%\marginpar{I really want to get rid of the full rank condition.}

\begin{definition} A {\em sink-source extension} of an IQP $(\Delta,W)$ through a (mutable or frozen) vertex $e$
	is a new IQP $(\ul{\Delta},W)$ such that $e$ is a sink or source, $\ul{\Delta}_0=\Delta_0\sqcup \{e\}$, and $\ul{\Delta}\mid_{\Delta_0}=\Delta$.
	
We denote by $\mf{\Delta}$ the class of IQPs obtained from a single vertex by sink-source extensions and mutations.
Since the rigidity is preserved in this procedure, any IQP in $\mf{\Delta}$ is rigid.
By abuse of language we also say such a quiver (with rigid potential) is in $\mf{\Delta}$.
\end{definition}

\begin{corollary} \label{C:model_muext} Any IQP in $\mf{\Delta}$ is a polyhedral cluster model
	if the $B$-matrix of the IQP has full rank throughout the procedure.
\end{corollary}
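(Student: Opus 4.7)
The plan is to prove the corollary by induction on the number of operations used to build the IQP from a single vertex, maintaining as inductive invariants that at each stage the IQP is a polyhedral cluster model and its UCA is Noetherian. The full-rank hypothesis on $B$ is assumed throughout, so the hypotheses of Theorem~\ref{T:model_muext} will be in place at each step. The base case is essentially trivial: for a single vertex the UCA is either a polynomial ring in one variable (frozen case) or is generated by the two exchange variables with one exchange relation (mutable case), Noetherian in either case, and the generic character trivially provides a polyhedral g-indexed basis.

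For the inductive step, suppose $(\Delta,W)\in\mf{\Delta}$ satisfies both invariants. If $(\ul{\Delta},\ul{W})$ is obtained from $(\Delta,W)$ by a single mutation, then the mutation-invariance of the (polyhedral) cluster model property noted after Definition~\ref{D:model} preserves both invariants, as mutation leaves the UCA unchanged. If instead $(\ul{\Delta},\ul{W})$ is a sink-source extension of $(\Delta,W)$ through a vertex $e$, then by construction $e$ is already a sink or source of $\ul{\Delta}$, so $e$ can be optimized in $(\ul{\Delta},\ul{W})$ via the empty (vacuously admissible) sequence of mutations. Theorem~\ref{T:model_muext} then applies and yields that $(\ul{\Delta},\ul{W})$ is a polyhedral cluster model.

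It remains to propagate Noetherianness through a sink-source extension. If $e$ is mutable, Proposition~\ref{P:muext=} identifies $\uca(\ul{\Delta})$ with the polynomial ring $\uca(\ul{\Delta}^e)[x_e']$, reducing the claim to the frozen case applied to $\ul{\Delta}^e$. If $e$ is frozen, combining the generic-character formula in Lemma~\ref{L:frsource}(2) with the argument in the proof of Lemma~\ref{L:sink_model} shows that every element of $\uca(\ul{\Delta})$ is a polynomial (not merely a Laurent polynomial) in $\ul{x}_e$ with coefficients in $\uca(\Delta)$, identifying $\uca(\ul{\Delta})$ with $\uca(\Delta)[\ul{x}_e]$; polynomial extensions preserve Noetherianness, closing the induction.

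The step I expect to be the main obstacle is precisely this Noetherianness bookkeeping in the frozen sink-source extension: Lemma~\ref{L:frext=} only yields the equality $\uca(\ul{\Delta})_{\ul{x}_e}=\uca(\Delta)[\ul{x}_e^{\pm 1}]$ after localizing at $\ul{x}_e$, and one must upgrade this to an equality of the full UCAs. The key input is the nonnegativity of the exponent $h+\g\Theta$ appearing in Lemma~\ref{L:frsource}(2), which forces every basis element $C_W((\g,h))$ to be polynomial in $\ul{x}_e$. Once this upgrade is in place, the induction runs cleanly and the corollary follows.
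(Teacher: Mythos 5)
Your overall inductive skeleton — build by sink/source extensions and mutations, maintain ``polyhedral cluster model'' and ``Noetherian UCA'' as invariants, invoke Theorem~\ref{T:model_muext} at each extension using the empty (admissible) optimizing sequence, and use mutation-invariance of the cluster-model property for the mutation step — is exactly the intended reading of the corollary, and you are right to flag Noetherianness as the one invariant whose propagation is not given by the cited results.

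The problem is that your proposed Noetherianness upgrade for the frozen step does not hold. You claim that Lemma~\ref{L:frsource}(2) together with the proof of Lemma~\ref{L:sink_model} gives $\uca(\ul{\Delta})=\uca(\Delta)[\ul{x}_e]$. Unpacking Lemma~\ref{L:frsource}: a basis of $\uca(\ul{\Delta})$ is $\{C_W((\g,h))=C_W(\g)\ul{x}_e^{\,h+\g\Theta}\}$ with $\g\in G(\Delta,W)$ and $h\ge 0$. Nothing forces $h+\g\Theta\ge 0$; Lemma~\ref{L:frsource}(1) only controls $h$, not $\g\Theta$, so the inclusion $\uca(\ul{\Delta})\subseteq\uca(\Delta)[\ul{x}_e]$ needs $\g\Theta\ge 0$ for every $\g\in G(\Delta,W)$, which is a separate (and $\Theta$-dependent) assertion. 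The reverse inclusion needs $C_W(\g)\ul{x}_e^{\,n}=C_W((\g,\,n-\g\Theta))\in\uca(\ul{\Delta})$ for all $n\ge 0$, i.e.\ $\g\Theta\le 0$ for every $\g$. Both together would force $\g\Theta=0$ identically, which is not the case. Concretely, take $\Delta$ to be the $A_2$ ice quiver with mutable $1$, frozen $2$, arrow $1\to 2$, and let $\ul{\Delta}$ add a frozen source $e\to 1$. Then $\uca(\Delta)\cong k[x_1,x_1']$, and the balanced extension with $\Theta=(0,1)$ gives $\ul{x}_1=x_1$, $\ul{x}_1'=\ul{x}_e x_1'$, so $\uca(\ul{\Delta})=k[x_1,\ul{x}_e x_1',\ul{x}_e]$ is a \emph{proper} subring of $\uca(\Delta)[\ul{x}_e]=k[x_1,x_1',\ul{x}_e]$; in particular $x_1'\notin\uca(\ul{\Delta})$. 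So the identification you assert fails, and ``subring of a Noetherian ring'' does not yield Noetherianness.

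What actually holds at the level of rings is only Lemma~\ref{L:frext=}, i.e.\ equality after inverting $\ul{x}_e$; passing from the localized statement to Noetherianness of the un-localized ring is genuinely the missing step and it cannot be patched by the cited lemmas alone. In the paper's applications the ambient rings are semi-invariant rings of quiver representations, so Noetherianness is available from invariant theory rather than from the cluster machinery; if you want a purely cluster-theoretic argument you would need something like a filtered/graded argument showing that the semigroup $G(\ul{\Delta},W)$ is finitely generated (Gordan-type) and that $\uca(\ul{\Delta})$ degenerates to the corresponding semigroup algebra under the $\g$-vector filtration — none of which is in your proposal. As written, the Noetherian bookkeeping has a gap precisely where you predicted the obstacle would lie.
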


\begin{remark} Recall that any extension through a frozen vertex and any mutation preserve the rank of $B$-matrix.
	The full rank condition can only be broken when we extend through a mutable vertex.
We even conjecture that the full rank condition can be dropped in this corollary.
\end{remark}

\begin{remark} %\marginpar{do you want this remark and example?}
	We believe that the class $\mf{\Delta}$ does not cover all cluster models. Here may be an example. Let us consider the following quiver $\Delta$
	$$\torusonept{\bullet}{\bullet}{\bullet}{\bullet}{\bullet}$$
	It is easy to show that up to quiver automorphisms, there are only 6 quivers mutation-equivalent to this one. None of them contains a sink or source.
	This quiver is obtained from the torus with disc cut out and two marked points on the boundary (see Section \ref{S:glue}, also \cite[11.4]{Mu}).
	We can show that this quiver with generic potential $W$ has finite-dimensional Jacobian algebra. 
	Since the quiver is coefficient-free, the set $G(\Delta,W)$ is the whole lattice $\mb{Z}^5$.
	On the other hand, \cite[Corollary 4.9]{MSW} implies that $\uca(\Delta)$ also has a basis parametrized by $\mb{Z}^5$. 
	This provides some evidence \hbox{that the image of $G(\Delta,W)$ under the generic character forms a basis in $\uca(\Delta)$.}
\end{remark}

\section{Gluing Hives} \label{S:glue}

We fix an integer $l\geq 2$. To the oriented triangle $\t$
\begin{figure}[!h] $\mtwo{1}{2}{3}{$\circlearrowleft$}{}{}{}$  \caption{} \label{f:triangle} \end{figure} 
we can associate an {\em ice hive quiver} $\Delta_l$ of size $l$ \cite{Fs1}.
The vertices of $\Delta_l$ are labeled by $(i,j,k)$ such that $(i,j,k)\in\mb{Z}_{\geq 0}^3$, $i+j+k=l$ and at most one of $i,j,k$ is $0$. 
If one of $i,j,k$ is $0$, then $(i,j,k)$ is frozen, and lies on an {\em edge} of $\Delta_l$.
An edge of $\Delta_l$ corresponds to an edge $\br{ab}$ ($a,b=1,2,3$) of $\t$ if the vertices on it have nonzero $a,b$-th coordinates in their labeling. 
The direction of arrows are thus determined by the orientation of $\t$.
For example, $\Delta_5$ is depicted in Figure \ref{f:hive5} (the dashed lines are edges).
\begin{figure}[H] $\hivefive$ \caption{} \label{f:hive5} \end{figure} 

Later we will consider oriented triangles with vertices labeled by letters, say $a,b,c$.
Such an oriented triangle $\t$ can be represented by a cyclically ordered triple, eg. $[a,b,c]=[b,c,a]=[c,a,b]$.
We define $\Delta_l^\t$ as a vertex relabeling of $\Delta_l$ as follows.
Fixing a representative of $\t$ (say $\t=[a,b,c]$), we relabel the vertex $(i,j,k)$ in $\Delta_l$ as the cyclic triple $\vsm{a,b,c\\i,j,k}$.
If the representative of $\t$ is given from the context, we may write $(i,j,k)$ for that vertex.

Given a pair of oriented triangles, we can glue them along a pair of edges. %For example,
\begin{figure}[H] $\hspace{-0.1in} \mthree{a}{b}{c}{d}{$\circlearrowleft$}{$\circlearrowleft$}{}{} \hspace{1.8in} \mthree{a}{b}{c}{d}{$\circlearrowleft$}{$\circlearrowright$}{}{}$ \caption{} \label{f:pair} \end{figure} 
\noindent This gluing corresponds to the gluing of two ice hive quivers of the same size by identifying frozen vertices along the common edge.
After the identification, we unfreeze those frozen vertices, and add additional arrows depending on the orientations as illustrated in Figure \ref{f:Pair}.
The additional arrows are in red (please ignore the color of vertices).
\begin{figure}[!h] $\diamondfive{\color{blue}}{\color{green}}\qquad  \diamondfiveanti$ \caption{} \label{f:Pair} \end{figure}

More generally, let us recall that an {\em ideal triangulation} of a marked bordered surface $\mc{S}$ is a triangulation of $\mc{S}$ whose
vertices are the marked points of $\mc{S}$. All triangulations in this paper will be ideal.
Let $\mb{T}$ be a triangulation of a marked bordered surface $\mc{S}$.
Then we can glue ice hive quivers according to such a triangulation.
We denote the resulting ice quiver by $\Diamond_l(\mb{T})$.
We allow self-fold triangles and two triangles glued along more than one edge. 
Note that they happen only when there are {\em punctures} (=marked points on the interior of $\mc{S}$).
All surfaces in this paper have no punctures, but
this flexibility is necessary to incorporate \cite{Fk2} and others.

A triangulation is called {\em consistent} if any pair of glued edges are pointing in the opposite direction.
If the surface is orientable, then we always have a consistent triangulation.
In this case the construction was considered by Fock and Goncharov in \cite{FG} when they study the cluster structure of certain $\SL_l$-character varieties.
Readers may be more familiar with the degenerate case when $l=2$. 
For example, when $l=2$ Figure \ref{f:Pair} (left) degenerates to
$$\diamondtwo$$
We observe that when $l=2$, the quiver $\Diamond_l(\mb{T})$ is the same as the adjacency quiver of $\mb{T}$ according to the recipe of \cite{FST}.

One of the main results of \cite{FST} says that the flip of the triangulation corresponds to the mutation of its adjacency quiver.
Let $\d=\br{bc}$ be the common edge of two adjacent triangles in $\mb{T}$ as in Figure \ref{f:flip} (left).
A {\em flip} along $\d$ is an operation on $\mb{T}$ sending the pair to Figure \ref{f:flip} (right), 
\begin{figure}[!h] $\mthree{a}{b}{c}{d}{$\circlearrowleft$}{$\circlearrowleft$}{}{} \hspace{1in} \mthreeflip{a}{b}{c}{d}{$\circlearrowleft$}{$\circlearrowleft$}$ \caption{} \label{f:flip} \end{figure}
and keeping the remaining triangles unchanged.
We denote the new triangulation by $\mb{T}'$.
It is well-known that any two consistent triangulations of $\mc{S}$ are related by a sequence of flips.
We warn readers that the flip is not defined for a pair of triangles as in Figure \ref{f:pair} (right).

More generally for any $l\geq 2$, Fock and Goncharov defined in \cite[10.3]{FG} a sequence of mutations $\bs{\mu}^{\d}$ such that 
$\bs{\mu}^{\d}(\Diamond_l(\mb{T}))=\Diamond_l(\mb{T}')$.
Let us briefly recall the definition.
The definition is local in the sense that we only need to mutate at interior vertices in the two hives to be flipped.
By a maximal central rectangle (wrt. the common edge), we mean a maximal rectangle symmetric about the common edge with vertices in $\Diamond_l(\mb{T})_\mu$. 
There are $l-1$ maximal central rectangles (including two degenerated to lines).
For example in Figure \ref{f:Pair}, the blue (resp. green) vertices are in the second (resp. third) maximal central rectangle.
Let $\bs{\mu}_k$ be the sequence of mutations along the $k$-th rectangle (in whatever orders). 
Then $\bs{\mu}^{\d}$ is the composition $\bs{\mu}_{l-1}\cdots \bs{\mu}_2\bs{\mu}_1$.

Consider the following hive strip of length $n+1$. The vertex $0$ can be frozen or mutable.
	$$\hiverow{_0}{_1}{_{n-1}}{_n}$$
\begin{lemma} \label{L:hiverow} The sequence of mutations $(n,n-1,\dots,1)$ transforms the quiver %\marginpar{need a name?}
	into
	$$\hiverowmuhalf{_0}{_1}{_{n-1}}{_{n}}$$
In particular, the sequence optimizes the vertex $0$.
If we delete $0$ and apply additional sequence of mutations $(1,\dots,n-1,n)$,
then the quiver becomes the original one with $0$ deleted.
%	$$\hiverowmu{_0}{_1}{_{n-1}}{_{n}}$$
%Moreover, there are several variations of this result. 
%For example, if $0$ is frozen, then the same sequence of mutations can do the similar job.
\end{lemma}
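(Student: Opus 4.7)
The plan is to prove both parts by direct induction on the mutation sequence, exploiting the locality of quiver mutation. For the first statement, let $Q_k$ denote the quiver obtained from the original one by applying $\mu_n, \mu_{n-1}, \ldots, \mu_{k+1}$ in order, so that $Q_n$ is the starting quiver and the goal is to identify $Q_0$ with the second displayed quiver. The inductive invariant I would track is that the subquiver of $Q_k$ on vertices $\{k, k+1, \ldots, n\}$ together with the bullets in that head region already has the target configuration, while the subquiver on $\{0, 1, \ldots, k-1\}$ with its adjacent bullets retains the original configuration, with a bounded transition localized around vertex $k$.

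The inductive step $Q_{k-1} = \mu_k(Q_k)$ is the heart of the computation: one enumerates the arrows incident to $k$ in $Q_k$ (the arrow from $k-1$, two incoming arrows from the pair of bullets just to the right of $k-1$, and two outgoing arrows to the pair of bullets just to the right of $k$), applies Definition~\ref{D:Qmu}, and checks that the two-cycles created against the pre-existing arrows between those bullets and vertex $k-1$ cancel correctly, extending the head configuration one position to the left. This bookkeeping is the main obstacle: although the computation is entirely mechanical, the profusion of arrows makes a local diagrammatic verification essentially mandatory at each step. Once $Q_0$ is shown to match the target, inspection reveals that the only incident arrow at vertex $0$ is $1 \to 0$, hence $0$ is a sink and has been optimized.

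For the final statement I would combine two ingredients. First, on any quiver, the formal composite $\mu_n \mu_{n-1} \cdots \mu_1 \mu_1 \mu_2 \cdots \mu_n$ collapses to the identity by successive pairwise cancellation of $\mu_k \mu_k$ starting from the innermost $\mu_1 \mu_1$ and working outward, terminating in $\mu_n \mu_n = \mathrm{id}$. Second, because vertex $0$ is a sink of $Q_0$ with a unique adjacent arrow, mutation at any other vertex $u$ commutes with deletion of $0$: the only new arrows that $\mu_u$ could create involving $0$ must pass through an arrow between $u$ and $0$, which is erased by the deletion anyway, and conversely such arrows are absent from the start if we delete first. Iterating this commutation along the sequence $(1, 2, \ldots, n)$ reduces the post-deletion half of the computation to the action of the involution-trivial composite on the original, followed by deleting vertex $0$; the final quiver is therefore the original one with vertex $0$ removed, as claimed.
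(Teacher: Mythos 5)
Your argument is correct, and for the first claim it follows essentially the same route as the paper, whose entire proof is the sentence ``The proof is by an easy induction on $n$'' together with a pointer to \cite[Proposition~19]{Ma}; your induction on the mutation index $k$ (peeling off $\mu_n, \mu_{n-1}, \dots$) is the same local bookkeeping in slightly different clothing, and, like the paper, you leave the step-by-step diagram check to the reader. Where you genuinely diverge from and improve on the paper is the last claim: instead of running a second induction, you observe that deletion of a vertex $e$ commutes (on the surviving subquiver) with mutation at any $u\neq e$, push the deletion past $\mu_1,\dots,\mu_n$, and let the word $\mu_n\cdots\mu_1\,\mu_1\cdots\mu_n$ telescope to the identity via $\mu_k\mu_k=\mathrm{id}$. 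This is cleaner and more robust than a diagram chase and, importantly, it makes the last claim completely independent of the explicit form of the mutated quiver established in the first claim. One small but worthwhile correction: the clause ``because vertex $0$ is a sink of $Q_0$ with a unique adjacent arrow'' is a red herring. The commutation $\mu_u(Q)\setminus\{0\}=\mu_u(Q\setminus\{0\})$ for $u\neq 0$ holds unconditionally, and your own justification (any new arrow touching $0$ comes from a path through a $u$--$0$ arrow, which is erased by the deletion either way, and $2$-cycle cancellation between pairs disjoint from $\{0\}$ is unaffected) is already the general proof and uses nothing about $0$ being a sink. Indeed you need the general version, since after the first reverse mutation $\mu_1$ the vertex $0$ in $\mu_1(Q_0)$ is a source, not a sink, and at later stages it is neither; stating the commutation as the unconditional fact it is keeps the iteration honest.
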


\begin{proof} The proof is by an easy induction on $n$. A variation was proved in \cite[Proposition 19]{Ma}.
\end{proof}

\begin{example} Let $\Delta_l^{\flat\flat}$ (resp. $\Delta_l^\flat$) be the quiver obtained from $\Delta_l$ by forgetting frozen vertices on any two (resp. one) edges.
It is shown in \cite{Fs1,Fs2,Fart} using quite different methods that with the rigid potentials they (including $\Delta_l$) are all polyhedral cluster models.
There are similar result for $\Delta_l^{\flat\flat}$ and $\Delta_l^\flat$ in a slightly different context in \cite{Ma}.
Using the above lemma and Theorem \ref{T:model_muext}, it is easy to show this for $\Delta_l^{\flat}$ and $\Delta_l$ assuming the result for $\Delta_l^{\flat\flat}$.
For example, applying the mutation at $(2,1,2)$ then $(2,2,1)$ optimizes the frozen vertices $(2,0,3)$ and $(2,3,0)$ in $\Delta_5$ of Figure \ref{f:hive5}.
We strongly recommend readers to play with a few examples using \cite{Ke}.
%$$\hivefiveflat$$

The same method can prove that $\Delta_l^{\flat\flat}$ are in $\mf{\Delta}$ (so $\Delta_l^{\flat}$ and $\Delta_l$ are in $\mf{\Delta}$ as well).
However, to apply Lemma \ref{L:hiverow} to $\Delta_l^{\flat\flat}$, we have to remove frozen vertices of $\Delta_l^{\flat\flat}$ first.
At some point, this will violates the full rank condition of Theorem \ref{T:model_muext}.
Interested readers can check that similar sequences of Lemma \ref{L:hiverow} cannot optimize any mutable vertex without removing frozen ones first.
\end{example}

To obtain the full condition for some $\Diamond_l(\mb{T})$, we need the following lemmas.
\begin{lemma} \label{L:gluerank} Let $\larger\Diamond$ be obtained from $\Delta$ and $\nabla$ by gluing along a set of frozen vertices $c$ (and unfreezing $c$).
Let $a$ and $a'$ (resp. $b$ and $b'$) be the set of rest frozen vertices (resp. mutable) in $\Delta$ and $\nabla$.
Suppose that the submatrix of $B(\Delta)$ consisting of $b\cup c$-rows and $a\cup b$-columns is of full rank, and $B$-matrix of $\nabla$ is also of full rank.
Then so is the $B$-matrix of $\Diamond$.
\end{lemma}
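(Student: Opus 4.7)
The plan is to exhibit a block structure on $B(\Diamond)$ reflecting the gluing and then peel off the two hypotheses in sequence. First I order the mutable vertices of $\Diamond$ as $b \sqcup c \sqcup b'$ and split the full vertex set as $(a \cup b) \sqcup c \sqcup (a' \cup b')$. Since gluing identifies only vertices of $c$, no arrow of $\Diamond$ runs between $a \cup b$ and $a' \cup b'$, and any arrows introduced by the gluing recipe lie among $c$-vertices. In this ordering $B(\Diamond)$ takes the block form
\[
B(\Diamond) \;=\; \begin{pmatrix} B_1 & c_1 & 0 \\ C_1 & D & C_2 \\ 0 & c_2 & B_2 \end{pmatrix},
\]
where $\sm{B_1 & c_1 \\ C_1 & D_\Delta}$ is the extended $B(\Delta)$ (treating the vertices of $c$ as mutable), $\sm{D_\nabla & C_2 \\ c_2 & B_2}$ is the extended $B(\nabla)$, and $D = D_\Delta + D_\nabla$ collects the contributions to arrows among $c$-vertices from both sides.

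Next I will test linear independence of the rows. Suppose $(\alpha,\gamma,\beta) \in k^b \oplus k^c \oplus k^{b'}$ annihilates every column of $B(\Diamond)$. Restricting to the $a \cup b$ columns, the lower-left zero block eliminates $\beta$ and what remains is
\[ (\alpha,\gamma)\,\sm{B_1 \\ C_1} \;=\; 0. \]
Hypothesis~(1) is precisely that this $(b\cup c)\times(a\cup b)$-submatrix of the extended $B(\Delta)$ has full row rank, forcing $\alpha = 0$ and $\gamma = 0$. Substituting back, the relations on the remaining columns (namely $a' \cup b'$ together with $c$) collapse to $\beta\cdot B(\nabla) = 0$, and hypothesis~(2) then forces $\beta = 0$. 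Hence the rows of $B(\Diamond)$ are linearly independent, which is the full-rank statement.

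The only subtle step, and the one I expect to be the main obstacle, is the block-form claim itself: I must verify from the gluing recipe of Section~\ref{S:glue} that no new arrow crosses from $a\cup b$ into $a'\cup b'$, and that the $c$-row entries in the outer column blocks genuinely reproduce the extended matrices on each side, so that the central $D$-block collects exactly the $c$-to-$c$ contributions of $\Delta$ and $\nabla$. Once this bookkeeping is unwound, the block-triangular linear-algebra argument above is immediate.
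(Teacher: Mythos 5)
Your proof is correct and follows essentially the same approach as the paper: you exhibit the identical block structure on $B(\Diamond)$ (rows $b,c,b'$ vs.\ columns $a\cup b$, $c$, $b'\cup a'$ with zero blocks in the off-diagonal corners), and your direct left-null-space argument is an equivalent phrasing of the paper's elementary column operations, which simply clear the $c,b',a'$ columns in the $b,c$-rows using full row rank of the upper-left block and then read off the rank as a sum.
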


\begin{proof} The $B$-matrix of $\Diamond$ is given by the block matrix on the left,
	$$\begin{blockarray}{cccccc}
	& a & b & c & b' & a'  \\
	\begin{block}{c(ccccc)}
	b & * & * & * & 0 & 0 \\
	c & * & * & * & * & * \\
   b' & 0 & 0 & * & * & * \\
	\end{block} \end{blockarray}\qquad \wtd{\quad} \quad
	\begin{blockarray}{cccccc}
	& a & b & c & b' & a'  \\
	\begin{block}{c(ccccc)}
	b & * & * & 0 & 0 & 0 \\
	c & * & * & 0 & 0 & 0 \\
	b' & 0 & 0 & * & * & * \\
	\end{block} \end{blockarray}$$
	which is equivalent to the one on the right by elementary column transformations.
    The statement is now clear.
\end{proof}

\begin{lemma} \label{L:fullrank} Let $a$ (resp. $c$) be the set of frozen vertices on one (resp. the other) edge of $\Delta_l^\flat$.
	Let $\Delta_l^{\flat'}$ be an ice quiver obtained from $\Delta_l^\flat$ by unfreezing $c$.
	Then the submatrix of $B(\Delta_l^{\flat'})$ consisting of $b\cup c$-rows and $a\cup b$-columns has full rank as in Lemma \ref{L:gluerank}. 
\end{lemma}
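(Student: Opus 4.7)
The plan is to induct on $l$. The submatrix in question is square of size $\binom{l}{2}\times\binom{l}{2}$, so full rank is equivalent to invertibility. For $l=2$ the interior $b$ is empty, and the submatrix reduces to the $1\times 1$ matrix $(-1)$ coming from the single corner arrow between the unique vertex of $a$ and the unique vertex of $c$. For $l\geq 3$, denote the submatrix by $M$, let $y_0\in a$ be the unique frozen vertex joined to $c$ by a corner arrow of $\Delta_l$, and let $b_1\subset b$ be the innermost interior "layer" adjacent to the edge $c$, so that $|\{y_0\}\cup b_1|=l-1=|c|$.

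The \emph{key local observation} is that every vertex of $c$ has arrows inside $\Delta_l^{\flat'}$ only to other $c$-vertices, to $b_1$, or (in the unique corner case) from $y_0$. Hence the $c$-rows of $M$ are supported entirely on the $l-1$ columns $\{y_0\}\cup b_1$; ordering these columns and the rows of $c$ from the corner inward makes the $(c)\times(\{y_0\}\cup b_1)$ sub-block upper triangular with $-1$'s on the diagonal, hence invertible. Consequently $M$ is block upper-triangular,
$$ M \;=\; \begin{pmatrix} M_1 & 0 \\ \ast & M_2 \end{pmatrix}, $$
with $M_1$ the triangular block just described and $M_2$ the restriction of $M$ to rows $b$ and columns $(a\setminus\{y_0\})\cup(b\setminus b_1)$, so $\det M=\det M_1\cdot\det M_2$. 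To identify $M_2$ with the analogous submatrix of $B(\Delta_{l-1}^{\flat'})$, I would use the shift $\phi\colon(i,j,k)\mapsto(i,j-1,k)$ perpendicular to $c$: under $\phi$, the inner layer $b_1$ maps bijectively to the unfrozen edge of $\Delta_{l-1}^{\flat'}$, $b\setminus b_1$ maps to its interior, and $a\setminus\{y_0\}$ maps to its frozen edge. Since the arrow rules at an interior hive vertex (out-arrows to $(i-1,j+1,k),(i,j-1,k+1),(i+1,j,k-1)$ and in-arrows by the reverse) are translation-invariant in $(i,j,k)$, the entries of $M_2$ match those of the corresponding $\Delta_{l-1}^{\flat'}$-submatrix, and the inductive hypothesis yields $\det M_2\neq 0$.

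The main technical point is the case analysis verifying the arrow identification between the four classes of pairs (rows in $b_1$ vs.\ $b\setminus b_1$, columns in $a\setminus\{y_0\}$ vs.\ $b\setminus b_1$); the only non-routine case is the single $a$-to-$b_1$ arrow $(l-2,2,0)\to(l-2,1,1)$ in $\Delta_l$, which under $\phi$ must recover the top corner arrow $(l-2,1,0)\to(l-2,0,1)$ of $\Delta_{l-1}$. Once this bookkeeping is set up, everything reduces to a local check at each interior vertex, and the induction closes.
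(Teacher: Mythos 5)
Your proof is correct, and it takes a genuinely different route from the paper's. The paper's proof is a single observation: after ordering both the row set $b\cup c$ and the column set $a\cup b$ lexicographically by the triple $(i,j,k)$, the matrix becomes triangular with $\pm 1$'s on the diagonal (the diagonal pairing being the shift $(i,j,k)\mapsto(i+1,j,k-1)$ toward the deleted edge, which is lex-monotone and hits exactly the out-arrow $(i,j,k)\to(i+1,j,k-1)$, while all other nonzero entries of a row sit at lex-smaller columns). You instead peel off the $c$-rows and the adjacent layer $\{y_0\}\cup b_1$ of columns, obtain a block-triangular decomposition $\left(\begin{smallmatrix}M_1 & 0 \\ \ast & M_2 \end{smallmatrix}\right)$ with $M_1$ explicitly triangular, and then identify $M_2$ with the $\Delta_{l-1}^{\flat'}$-submatrix via the translation $\phi$. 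Unrolling your induction produces a triangularization in a different vertex ordering (layer by layer in the coordinate transverse to $c$), so both proofs ultimately rest on the same triangular structure, but yours makes the recursive self-similarity of the hive explicit. One small note: you implicitly took $c$ to be the $j=0$ edge with the $i=0$ edge deleted, whereas the paper takes $c=\br{23}$ (the $i=0$ edge) with $\br{13}$ deleted; the two assignments are interchanged by the reflection $(i,j,k)\mapsto(k,j,i)$, which reverses arrows but preserves rank, so the lemma holds for both -- and, interestingly, for your assignment the naive lexicographic ordering does \emph{not} triangularize the matrix, so your $j$-layered ordering is genuinely doing work there. The only blemishes are cosmetic: the displayed block matrix is block \emph{lower}-triangular rather than upper-triangular, and the "key local observation" could be phrased more precisely to say the $c$-rows have nonzero entries only at $\{y_0\}\cup b_1$ (arrows both to and from $b_1$, plus the one corner arrow from $y_0$); but neither affects the argument.
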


\begin{proof} Suppose that $a$ is on the edge $\br{12}$ and $c$ is on the edge $\br{23}$ (see Figure \ref{f:triangle}).
%We define a total order $a$
It is easy to see that if we rearrange the rows and columns of the submatrix according to the lexicographic order,
then we get a triangular matrix with $1$'s on the diagonal.	
\end{proof}

To give a simpler proof of the next proposition, we will introduce a new involutive operation on triangulated surfaces called twist. %\marginpar{do you have a better name than ``twist"?}
To describe this operation, we need to specify a triangle $\t\in\mb{T}$ and one of its edges $\e$.
The twist consists of 3 steps. \begin{enumerate}
	\item Cut along the other two edges of the chosen triangle (see Figure \ref{f:twist} left); 
	\item Change the orientation of the triangle and the identification of the two edges 
	(the new identification is indicated by arrows, see Figure \ref{f:twist} right);
	\item Glue according to the new identification.
\end{enumerate}
\begin{figure}[!h] $\mtwod{>}{}{} \hspace{-.3in} \mthree{}{}{}{}{}{$\circlearrowleft$}{>}{>>}\hspace{-.3in} \mtwod{}{>>}{}
   \hspace{.3in} \mtwod{>}{}{} \hspace{-.3in} \mthree{}{}{}{}{}{$\circlearrowright$}{>>}{>}\hspace{-.3in} \mtwod{}{>>}{}$ \caption{} \label{f:twist}\end{figure}
We have a few remarks. In Step (1), if any of two edges is a part of boundary, then we do nothing for that edge.
Readers should be aware that the new identification in Step (2) is not just a naive ``interchange".
For example, let $a$ be the vertex opposite to the chosen edge.
According to our definition, the vertices of adjacent triangles previously glued to $a$ will not be glued to $a$.
%Moreover, we keep the orientation of the triangles unchanged. One might get an illusion from the picture that we changed the orientation of $\t$. 
Finally, we need to warn readers that this operation may alter the topology of the surface (see \cite{Fk2}).
However, this will not happen in this paper. %We will discuss this issue in \cite{Fk2}.
We denote the new triangulation (of a possibly new surface) by $\mb{T}^{\t,\e}$.

\begin{lemma} \label{L:twist} There is sequence of mutations $\bs{\mu}^{\t,\e}$ such that  
	$$\bs{\mu}^{\t,\e}(\Diamond_l(\mb{T}))=\Diamond_l(\mb{T}^{\t,\e}).$$
\end{lemma}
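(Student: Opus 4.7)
The plan is to exhibit $\bs{\mu}^{\t,\e}$ explicitly. The key starting observation is that the twist is a local operation: it modifies the gluing pattern only in a small neighborhood of the triangle $\t$, so the required mutations will involve only vertices in $\Delta_l^\t$ together with the vertices lying in the hives immediately across the two non-$\e$ edges of $\t$. In particular, outside this neighborhood, both $\Diamond_l(\mb{T})$ and $\Diamond_l(\mb{T}^{\t,\e})$ coincide as ice quivers, so no mutations are needed there.

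My first approach would be to decompose the twist into a bounded sequence of flips, using the Fock--Goncharov sequence $\bs{\mu}^\d$ as a building block for each flip. Concretely, let $\e_1,\e_2$ be the two edges of $\t$ distinct from $\e$. Flipping along $\e_1$ removes $\t$ and replaces it with a new triangle; a subsequent flip along a suitable edge of the new configuration produces yet another triangle, and so on. A short sequence of such flips (two or three, depending on the local combinatorics) should land on a triangulation combinatorially equivalent to $\mb{T}^{\t,\e}$ in the prescribed neighborhood. The composition of the corresponding $\bs{\mu}^\d$'s is then the desired $\bs{\mu}^{\t,\e}$. In the boundary cases where $\e_1$ or $\e_2$ lies on $\partial\mc{S}$, Step (1) of the twist is trivial along that edge and the corresponding factor in the composition drops out.

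The main obstacle is verifying that the composition yields \emph{precisely} the ice quiver $\Diamond_l(\mb{T}^{\t,\e})$, with the correct vertex labels and the correct ``red'' arrows induced by the new edge identifications, and not merely an isomorphic quiver with a permuted labeling. This is a diagrammatic check best handled by induction on $l$ (the base case $l=2$ being essentially trivial since $\Delta_2^\t$ has no mutable vertices) together with the observation that each $\bs{\mu}^\d$ decomposes as mutations along the $l-1$ maximal central rectangles across the flipped edge, so the effect on $\Delta_l^\t$ can be tracked rectangle by rectangle. If the flip-based decomposition proves elusive in a given configuration, an alternative is to define $\bs{\mu}^{\t,\e}$ directly as the sequence within $\Delta_l^\t$ that reverses its internal arrow orientation (mirroring Step (2) of the twist), and to verify compatibility with the red arrows along the re-identified edges independently.
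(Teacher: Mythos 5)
Your primary approach—decomposing the twist into a bounded sequence of flips—cannot work, and this is a conceptual error rather than a gap to be filled. The twist changes the \emph{orientation} of the chosen triangle $\t$ (it turns $\circlearrowleft$ into $\circlearrowright$), whereas a flip never alters the orientation of any triangle in a triangulation; it only replaces one diagonal of a quadrilateral with the other. Moreover, the paper explicitly warns that the flip (and hence $\bs{\mu}^\d$) is only defined for pairs of triangles glued consistently, i.e., with opposite orientations along the shared edge, whereas the entire point of the twist in this paper is to relate non-consistent triangulations (the alternating ones) to consistent ones. So the Fock--Goncharov sequences $\bs{\mu}^\d$ are not even available as building blocks in the setting where the twist is needed. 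A secondary inaccuracy: you assert that the required mutations will involve vertices "in the hives immediately across the two non-$\e$ edges of $\t$," but the paper's sequence is strictly local to the interior (non-edge) vertices of the single hive $\Delta_l^\t$; no mutations occur in neighboring hives.

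Your hedged alternative—defining $\bs{\mu}^{\t,\e}$ directly as a sequence of mutations inside $\Delta_l^\t$ that reverses the internal orientation—is the correct idea and is what the paper actually does, but you leave it entirely unspecified. The paper exhibits the concrete sequence: it is the composition $\bs{\mu} = \bs{\mu}_2\bs{\mu}_3\cdots\bs{\mu}_{l-1}$, where $\bs{\mu}_k$ mutates at the mutable vertices of the sub-triangle $\t_k$ of arrow-length $k$ anchored at the vertex opposite $\e$, in reverse-lexicographic order. This is the sequence called $\bs{\mu}_{\sqrt{l}}$ in \cite{GLS} and \cite{Fart}, whose effect on the hive is computed in \cite[Corollary B.9]{Fart}; the remaining issue you flag (correctness of arrows between edge vertices, i.e., the "red" arrows after re-gluing) is resolved by observing that only the last block $\bs{\mu}_{l-1}$ affects arrows between vertices on the chosen edge, whose effect is pinned down by \cite[Lemma 9]{Ma}, and by symmetry for the other two edges. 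Without these explicit ingredients, the claim that the sequence produces exactly $\Diamond_l(\mb{T}^{\t,\e})$, including the correct edge-vertex arrows, remains unproved.
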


\begin{proof} The sequence to be defined is local in the sense that we only need to perform mutations at non-edge vertices in the chosen triangle $\t$. So we will define it for a single hive.
Such a sequence of mutations is defined in \cite{GLS} and \cite[B.2]{Fart} in a boarder context
(The notation for this operation there is $\bs{\mu}_{\sqrt{l}}$, but unfortunately, the $l$ under the square root is not the integer but has other meaning).

Let us briefly recall the definition. 
Suppose that the hive corresponds to the triangle in Figure \ref{f:triangle}, and $\br{23}$ is the chosen edge (For other choices of edges, the operation can be defined using the cyclic symmetry).
Then let $\t_k$ be the triangle with vertex $1$ and its opposite side parallel to $\br{23}$ of $k$-arrow length. 
Denote by $\bs{\mu}_k$ the composition of mutations at each (mutable) vertex in $\t_k$ in the reverse lexicographic ordering for the labeling.
We define the sequence of mutations as $\bs{\mu}:=\bs{\mu}_2\bs{\mu}_3\cdots \bs{\mu}_{l-1}$.
Let us take the hive $\Delta_5$ as an example. Then we have that
$\bs{\mu}_4=\mu_{(1,1,3)}\mu_{(1,2,2)}\mu_{(1,3,1)}\mu_{(2,1,2)}\mu_{(2,2,1)}\mu_{(3,1,1)},\  \bs{\mu}_3=\mu_{(2,1,2)}\mu_{(2,2,1)}\mu_{(3,1,1)}$, and $\bs{\mu}_2=\mu_{(3,1,1)}.$

This lemma is almost a reformulation of \cite[Corollary B.9]{Fart} in our setting of triangulation except that
we need to take care of arrows between edge vertices. Recall that the twist changes the orientation of the triangle.
So arrows between its edge vertices will change (see Figure \ref{f:Pair}).
But at least for the edge $\br{bc}$ this is implied in \cite[Lemma 9]{Ma}, where author considered the subsequence $\bs{\mu}_{l-1}$ of $\bs{\mu}$.
The reason is that other subsequences have no effect on arrows between vertices on $\br{23}$.
The proof goes similarly for the other two edges.
\end{proof}

Let $\mc{D}_m$ be the disk with $m$ marked points on the boundary.
\begin{proposition} \label{P:Dm} Let $\mb{T}$ be any (consistent) triangulations of $\mc{D}_m$.
	 Then $\Diamond_l(\mb{T})$ is in $\mf{\Delta}$, and with the rigid potential $W_l(\mb{T})$ is a polyhedral cluster model.
\end{proposition}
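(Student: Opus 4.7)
The plan is to argue by induction on $m$. The base case $m=3$ is the unique triangulation of $\mc{D}_3$, giving $\Diamond_l(\mb{T})=\Delta_l\in\mf{\Delta}$ with polyhedral rigid potential by the example following Lemma \ref{L:hiverow}. For the inductive step, since $\mf{\Delta}$ is closed under mutations (by its very definition) and being a polyhedral cluster model is mutation-invariant (remark after Definition \ref{D:model}), and since any two consistent triangulations of $\mc{D}_{m+1}$ are related by a sequence of flips which the Fock--Goncharov sequences $\bs{\mu}^{\d}$ realize as admissible mutation sequences on $\Diamond_l(\cdot)$, it suffices to verify the claim for a single convenient triangulation of $\mc{D}_{m+1}$.

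I will take $\mb{T}_{m+1}$ to extend a triangulation $\mb{T}_m$ of $\mc{D}_m$ (for which the conclusion holds by the inductive hypothesis) by a single ``ear'' triangle $\t_{m+1}$ glued along an edge $\d$ previously lying on the boundary of $\mc{D}_m$. The core step is to produce $\Diamond_l(\mb{T}_{m+1})$ from $\Diamond_l(\mb{T}_m)\in\mf{\Delta}$ by an interleaved sequence of sink-source extensions and mutations: starting from the apex opposite $\d$, I sink-source extend inward by the vertices of $\Delta_l^{\t_{m+1}}$ not lying on $\d$, inserting intermediate mutations whenever needed so that each new vertex is added as a sink or a source (mirroring how $\Delta_l$ itself is assembled in the example after Lemma \ref{L:hiverow}); then I apply a sequence of mutations across the maximal central rectangles straddling $\d$, in the spirit of $\bs{\mu}^{\d}$, to install the red arrows crossing $\d$ and to effectively convert the originally frozen $\d$-vertices of $\Diamond_l(\mb{T}_m)$ into the mutable $\d$-vertices of the glued quiver. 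The full-rank condition on $B$ is preserved throughout: Lemma \ref{L:fullrank} supplies the rank hypothesis for $\Delta_l^{\t_{m+1}}$, and Lemma \ref{L:gluerank} propagates full rank across the gluing, so Corollary \ref{C:model_muext} then delivers the polyhedral cluster model conclusion for $(\Diamond_l(\mb{T}_{m+1}), W_l(\mb{T}_{m+1}))$.

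The hard part will be verifying that this explicit recipe really outputs $\Diamond_l(\mb{T}_{m+1})$ on the nose and that each intermediate quiver remains $2$-acyclic, so that the mutations of the rigid potential $W_l(\mb{T}_{m+1})$ are admissible and rigidity is maintained at every stage. I plan to dispose of this by a local analysis at $\d$, reducing to the two-hive gluing shown in Figure \ref{f:Pair} and adapting the Fock--Goncharov computation in \cite[10.3]{FG} that $\bs{\mu}^{\d}$ realizes a flip; the remaining combinatorics of the new hive away from $\d$ follows the pattern already worked out for $\Delta_l$ in the example after Lemma \ref{L:hiverow}.
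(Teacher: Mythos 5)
The central difficulty with your proposed inductive step is the frozen/mutable status of the vertices on the gluing edge $\d$. In $\Diamond_l(\mb{T}_m)$ the vertices along $\d$ are \emph{frozen} (since $\d$ is a boundary edge of $\mc{D}_m$), whereas in $\Diamond_l(\mb{T}_{m+1})$ those same vertices are \emph{mutable} (they now lie on an interior diagonal, and the gluing recipe in Section \ref{S:glue} unfreezes identified vertices). But the two operations generating $\mf{\Delta}$ never change the status of an existing vertex: a sink-source extension adds one new vertex (whose status is chosen when it is added) and leaves $\Delta\!\mid_{\Delta_0}$ untouched, and quiver mutation acts only at mutable vertices and, by Definition \ref{D:Qmu}, keeps the frozen set fixed. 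So no interleaving of sink-source extensions and mutations can produce $\Diamond_l(\mb{T}_{m+1})$ starting from $\Diamond_l(\mb{T}_m)$; the step where you ``effectively convert the originally frozen $\d$-vertices into the mutable $\d$-vertices'' is not an available move, and the inductive chain breaks at its first link.

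This is precisely why the paper does not run an induction through $\Diamond_l(\mb{T}_m)$. Instead it reduces a single $\Diamond_l(\mb{T})$ downward by deletions and mutations: all boundary frozen vertices are optimized and deleted first (except those on one edge, kept to preserve the full-rank hypothesis of Theorem \ref{T:model_muext}), so the frozen/mutable conflict on interior diagonals never arises; then rows parallel to the diagonals are peeled off via Lemma \ref{L:hiverow}, landing on $\Delta_l^{\flat\flat}$. That peeling pattern is arranged for the alternating triangulation, and the role of the twist (Lemma \ref{L:twist}) is exactly to replace an arbitrary consistent triangulation of $\mc{D}_m$ by an alternating one up to mutation. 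If you want to salvage an induction on $m$, the invariant you carry forward should be something like $\Diamond_l(\mb{T}_m)$ with the $\d$-vertices \emph{deleted}, with the boundary coefficients reintroduced only at the end; but making the full-rank condition persist through the mutable-vertex extensions then requires essentially the same bookkeeping the paper performs, so you do not get a genuinely shorter argument.
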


\begin{proof} Since all consistent triangulations are related by flips, it suffices to prove for a particular $\mb{T}$.
Consider the following non-consistent {\em alternating} triangulation.
	$$\mnalter{$\circlearrowleft$}{$\circlearrowright$}$$
For each even (clockwise oriented) triangle with any of its edges, we apply the twist.
Then we can easily see by induction that it transforms to a consistent triangulation of $\mc{D}_m$ (The alternating triangulation in the above picture transforms to $\mc{D}_9$).
Since the twist is involutive, by Lemma \ref{L:twist} it suffices to prove the statement for the quivers corresponding to these alternating triangulations.

The proof will go backwards, that is, instead of using sink-source extension we use sink-source deletion.
Using Lemma \ref{L:hiverow} we show that each frozen vertice can be optimized.
So let us remove all frozen vertices except for ones corresponding to the leftmost edge.
The purpose of preserving those frozen vertices is that this will grantee the full rank requirement of Theorem \ref{T:model_muext} throughout later operations. This can be verified by Lemma \ref{L:gluerank} and \ref{L:fullrank}. 
For any $\Diamond_l(\mb{T})$ if we delete a set of frozen vertices corresponding to an edge in $\mb{T}$,
then pictorially we make this edge a dashed line.
So after deleting frozen vertices, the quiver corresponds to the one in Figure \ref{fig:deletion} (ignoring arrows).
\begin{figure}[!h]  $$\mnalterdash$$ \caption{} \label{fig:deletion} \end{figure}

We perform the operation defined in Lemma \ref{L:hiverow} as follows.
We apply the sequence of mutations in the direction of arrows (see Figure \ref{fig:deletion}), then delete the sink and apply the same sequence of mutations backwards.
In this way we can one by one delete rows parallel to the common edge as shown above.
Below is a schematic diagram. The number indicates the order for deletion (reader can check that the order does matter).
The gray part is the part already deleted, so the current one to be deleted is vertex $5$.
We are going apply the sequence of Lemma \ref{L:hiverow} for the subquiver in red.
$$\hivex$$

In the end, what left over is the quiver represented by
$$\mtwo{}{}{}{}{}{-}{-}$$
which is nothing but the quiver $\Delta_l^{\flat\flat}$.
We have seen that this quiver is in $\mf{\Delta}$, and is a cluster model.
We conclude from Theorem \ref{T:model_muext} that $\Diamond_l(\mb{T})$ with the rigid potential is a polyhedral cluster model.
\end{proof}

\begin{conjecture} Let $\mb{T}$ be a triangulation of a (not necessarily orientable) surface such that there is a rigid potential $W_l$ on $\Diamond_l(\mb{T})$. Then whether $(\Diamond_l(\mb{T}),W_l)$ is a cluster model does not depend on $l$.
\end{conjecture}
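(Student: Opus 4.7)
The plan is to induct on $l$ using the sink-source extension techniques of Section~\ref{S:ext}, showing that $(\Diamond_l(\mb{T}), W_l)$ is a cluster model if and only if $(\Diamond_{l+1}(\mb{T}), W_{l+1})$ is. For a single triangle $\t$, the hive $\Delta_l^\t$ embeds canonically into $\Delta_{l+1}^\t$ as the subquiver consisting of vertices whose first coordinate is at least $2$; the complementary row of new vertices along the opposite edge can then be introduced by a sequence of sink-source extensions interleaved with mutations, in the spirit of the reverse direction of Lemma~\ref{L:hiverow}.

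Globalizing to the triangulation $\mb{T}$, the passage from $\Diamond_l(\mb{T})$ to $\Diamond_{l+1}(\mb{T})$ amounts to adding, for each edge of $\mb{T}$, a ``ribbon'' of new vertices shared between the two adjacent triangles (or glued to itself, in the case of self-folded triangles or non-orientable identifications). I would carry out the extension one row at a time: first introduce the rows from one side of each edge as sink-source extensions in a cyclically compatible ordering, then perform a global sequence of mutations (combining the local sequences of Lemma~\ref{L:hiverow} over all triangles) that turns the remaining rows into sink-source configurations, which can then also be added as extensions. Applying Theorem~\ref{T:model_muext} at each step would propagate the cluster model property from $\Diamond_l(\mb{T})$ to $\Diamond_{l+1}(\mb{T})$; the reverse implication proceeds by optimizing and deleting these same rows.

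The main obstacle is maintaining the full rank hypothesis of Theorem~\ref{T:model_muext} throughout this procedure. For higher-genus and non-orientable surfaces the $B$-matrix of $\Diamond_l(\mb{T})$ may drop rank, and this failure can also surface at intermediate steps of the extension, as warned in the remark following Corollary~\ref{C:model_muext}. Addressing this would require either the conjectured refinement of Theorem~\ref{T:model_muext} without the full rank assumption, or a careful choice of extension order that localizes any rank-deficient directions into frozen vertices, where the techniques of Section~\ref{ss:extfr} apply directly. A secondary difficulty is that the conjecture only posits the existence of some rigid $W_l$ at each level; compatibility between $W_l$ and $W_{l+1}$ under the extension is not automatic, and one would need to show either that the cluster model property is independent of the specific choice of rigid potential on a given $\Diamond_l(\mb{T})$, or that uniformly compatible rigid potentials (such as those expected from the Fock--Goncharov setup of \cite{FG}) can always be arranged.
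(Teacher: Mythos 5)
This statement is labeled as a \emph{conjecture} in the paper, and the authors offer no proof of it; indeed, the surrounding remarks are there precisely to flag that the problem is open and nontrivial. So there is no argument in the paper to compare your proposal against, and your proposal itself does not claim to close the question --- it is a research plan, and you correctly identify the two obstructions that keep it from being a proof. That self-assessment is accurate, but I want to sharpen why each gap is serious rather than technical.

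First, the full-rank hypothesis in Theorem~\ref{T:model_muext} is not a side condition you can expect to arrange by a clever ordering of extensions. The Remark immediately following this conjecture says that whether $\Diamond_l(\mb{T})\in\mf{\Delta}$ depends on $l$ (citing \cite{Fk2} for an explicit example), and the Remark after Corollary~\ref{C:model_muext} explains that the only way rank can drop in this machinery is during an extension through a mutable vertex. Your induction on $l$ adds a full ribbon of new (mostly mutable) vertices at each step, so it runs straight into this failure mode. On a surface with genus or non-orientability the ribbon is a topologically nontrivial cycle, and one generally cannot reduce to a coefficient-heavy seed the way Proposition~\ref{P:Dm} does for the disk by peeling rows down to $\Delta_l^{\flat\flat}$; that reduction relies essentially on $\mc{D}_m$ being contractible. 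Absent the conjectured strengthening of Theorem~\ref{T:model_muext} without the full-rank hypothesis, the strategy does not close.

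Second, the potential compatibility issue is also more than bookkeeping. Your sketch needs $W_{l}$ to be the restriction (possibly after mutation) of $W_{l+1}$ in the sense required by Theorem~\ref{T:model_muext}, but the conjecture only grants the existence of \emph{some} rigid potential at each level. You would need either a uniqueness statement for rigid potentials on $\Diamond_l(\mb{T})$ up to right-equivalence, or a uniform construction of compatible $W_l$ across all $l$ --- you allude to the Fock--Goncharov setup, but that construction is only given for orientable surfaces in \cite{FG}, whereas the conjecture explicitly allows non-orientable ones. Also note a minor issue with the claimed embedding: the vertices of $\Delta_{l+1}$ with first coordinate $\geq 2$ do not literally reproduce $\Delta_l$ (the boundary labels and arrows near the two adjacent edges do not match up without further relabeling), so even the local inductive step would need a more careful description than ``subquiver with $i\geq 2$.''

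In short: your plan is a reasonable first attack, consistent with the paper's own toolkit, and you are candid about the gaps; but those gaps are exactly where the difficulty of the conjecture lies, and the paper's own remarks indicate they are not expected to be straightforward.
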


\begin{remark} Whether $\Diamond_l(\mb{T}) \in \mf{\Delta}$ will depend on $l$ (See \cite{Fk2} for an example).
\end{remark}

\section{Application to Semi-invariant Ring of Quiver Representations}  \label{S:SI}

\subsection{Some Generalities} \label{ss:SI}
Let us briefly recall the semi-invariant rings of quiver representations \cite{S1}.
Readers can find any unexplained terminology in \cite{DW2}.
Let $Q$ be a finite quiver without oriented cycles.
For an arrow $a$, we denote by $t(a)$ and $h(a)$ its tail and head.
%We fix an algebraically closed field $k$ of characteristic zero.
For a dimension vector $\beta$ of $Q$, let $V$ be a $\beta$-dimensional vector space $\prod_{i\in Q_0} k^{\beta(i)}$. We write $V_i$ for the $i$-th component of $V$.
The space of all $\beta$-dimensional representations is
$$\Rep_\beta(Q):=\bigoplus_{a\in Q_1}\Hom(V_{t(a)},V_{h(a)}).$$
The product of general linear group
$$\GL_\beta:=\prod_{i\in Q_0}\GL(V_i)$$
acts on $\Rep_\beta(Q)$ by the natural base change.
%This action has a {\em kernel}, which is the multi-diagonally embedded multiplicative group $k^*$.
Define $\SL_\beta\subset \GL_\beta$ by
$$\SL_\beta=\prod_{i\in Q_0}\SL(V_i).$$
We are interested in the rings of semi-invariants
$$\SI_\beta(Q):=k[\Rep_\beta(Q)]^{\SL_\beta}.$$
The ring $\SI_\beta(Q)$ has a weight space decomposition
$$\SI_\beta(Q)=\bigoplus_\sigma \SI_\beta(Q)_\sigma,$$
where $\sigma$ runs through the multiplicative {\em characters} of $\GL_\beta$.
We refer to such a decomposition the $\sigma$-grading of $\SI_\beta(Q)$.
Recall that any character $\sigma: \GL_\beta\to k^*$ can be identified with a weight vector
$\sigma \in \mb{Z}^{Q_0}$
\begin{equation} \label{eq:char} \big(g(i)\big)_{i\in Q_0}\mapsto\prod_{i\in Q_0} \big(\det g(i)\big)^{\sigma(i)}.
\end{equation}
Since $Q$ has no oriented cycles, the degree zero component is the field $k$ \cite{Ki}.

Let us understand these multihomogeneous components
$$\SI_\beta(Q)_\sigma:=\left\{f\in k[\Rep_\beta(Q)]\mid g(f)=\sigma(g)f,\ \forall g\in\GL_\beta \right\}.$$
For any projective presentation $f: P_1\to P_0$, we view it as an element in the homotopy category $K^b(\proj Q)$.
Let $\f \in \mb{Z}^{Q_0}$ be the weight vector of $f$.
From now on, we will view a weight vector as an element in the dual $\Hom_{\mb{Z}}(\mb{Z}^{Q_0},\mb{Z})$ via the usual dot product.
We apply the functor $\Hom_Q(-,N)$ to $f$ for $N\in\Rep_\beta(Q)$
\begin{equation} \label{eq:canseq} \Hom_Q(P_0,N)\xrightarrow{\Hom_Q(f,N)}\Hom_Q(P_1,N).
\end{equation}
If $\f(\beta)=0$, then $\Hom_Q(f,N)$ is a square matrix.
Following Schofield \cite{S1}, we define
$$s(f,N):=\det \Hom_Q(f,N).$$
%We give a more concrete description for the map $\Hom_Q(f,N)$.\\
%{\bf Concrete description of $\Hom_Q(f,N):\ $}
%Recall that a morphism $P_1\xrightarrow{f} P_0$ can be represented by a matrix whose entries are linear combination of paths. Applying $\Hom_Q(-,N)$ to this morphism is equivalent to that we first transpose the matrix of $f$, then substitute paths in the matrix by corresponding matrix representations in $N$.

We set $s(f)(-)=s(f,-)$ as a function on $\Rep_\beta(Q)$. It is proved in \cite{S1} that $s(f)\in\SI_\beta(Q)_{\f}$.
In fact,
\begin{theorem}[\cite{DW1,SV,DZ}] \label{T:inv_span} $\{s(f)\}_{\f=\sigma}$ spans $\SI_\beta(Q)_{\sigma}$ over the base field $k$.
\end{theorem}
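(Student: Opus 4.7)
The plan is to decompose $k[\Rep_\beta(Q)]$ as a $\GL_\beta$-representation via the Cauchy formula, then identify each $\SL_\beta$-weight space with the span of the relevant determinantal semi-invariants. Applying Cauchy to each factor $k[\Hom(V_{t(a)},V_{h(a)})]$ yields a $\GL_\beta$-equivariant decomposition
$$k[\Rep_\beta(Q)] \cong \bigoplus_{(\lambda^a)_{a\in Q_1}}\ \bigotimes_{a\in Q_1} S_{\lambda^a} V_{t(a)}^* \otimes S_{\lambda^a} V_{h(a)},$$
and the isotypic piece $\SI_\beta(Q)_\sigma$ consists precisely of those tuples $(\lambda^a)$ for which, at each vertex $i$, the Schur functors of $V_i$ arriving via incoming arrows and those of $V_i^*$ arriving via outgoing arrows combine into a one-dimensional $\GL(V_i)$-representation with character $\det^{\sigma(i)}$. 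This gives a purely combinatorial formula for $\dim \SI_\beta(Q)_\sigma$.

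Next I would check the obvious direction: every $s(f)$ with $\f=\sigma$ lies in this weight space. If $f\colon P_1 \to P_0$ is a projective presentation with $\f\cdot\beta = 0$, then $\Hom_Q(f,N)$ is square, and under the $\GL_\beta$-action on $N\in\Rep_\beta(Q)$ its determinant transforms by the character \eqref{eq:char} with $\sigma=\f$.

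The crux is to show that every element of $\SI_\beta(Q)_\sigma$ is a $k$-linear combination of such $s(f)$'s. The key input is that $kQ$ is hereditary, so any $\sigma$ with $\sigma\cdot\beta = 0$ can be realized as the weight of some projective presentation by writing $\sigma = \sigma_+ - \sigma_-$ and taking $P_1 = \bigoplus_v P_v^{\sigma_-(v)}$, $P_0 = \bigoplus_v P_v^{\sigma_+(v)}$. Varying $f$ inside the affine space $\Hom_Q(P_1,P_0)$ gives a polynomial map $f \mapsto s(f)$ into $\SI_\beta(Q)_\sigma$ whose image linearly spans a $\GL_\beta$-invariant subspace. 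Matching the dimension of this subspace against the combinatorial count from the first step forces equality. This final matching can be carried out either via the Littlewood-Richardson / saturation combinatorics of Derksen-Weyman, or via a Reynolds-operator argument of Schofield-Van den Bergh which produces, for each Cauchy summand with the correct $\SL_\beta$-weight, an explicit presentation whose $s(f)$ realizes it.

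The hard part is exactly this last step: existence of enough projective presentations follows from hereditariness, but exhibiting a determinantal witness for every Cauchy summand in the weight space requires a nontrivial combinatorial or geometric construction. I expect to follow the Derksen-Weyman route, reducing $\dim \SI_\beta(Q)_\sigma$ to a count of lattice points in a polytope attached to $(\beta,\sigma)$ and matching those lattice points one-to-one with explicit Schofield presentations; the main subtlety is ensuring that generic $f$ yields $s(f)\neq 0$ (which holds iff the pair $(\sigma,\beta)$ lies in the cone of effective weights, and this is automatic on the isotypic components one needs to hit).
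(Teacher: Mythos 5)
The paper does not prove this theorem; it is a classical result stated with a citation to Derksen--Weyman, Schofield--Van den Bergh, and Domokos--Zubkov, so there is no in-paper argument to compare against and the question is whether your sketch would assemble into a valid proof. The high-level plan (Cauchy decomposition of $k[\Rep_\beta(Q)]$ as a $\GL_\beta$-module, verify $s(f)\in\SI_\beta(Q)_{\f}$, then match dimensions to force spanning) is a reasonable reconstruction of the Derksen--Weyman strategy, and the easy direction is correctly handled.

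However, the sketch stops exactly at the step that constitutes the theorem, and one of the intermediate observations is off. The claim that the image of $f\mapsto s(f)$ ``linearly spans a $\GL_\beta$-invariant subspace'' of $\SI_\beta(Q)_\sigma$ provides no leverage: the entire weight space is isotypic for the one-dimensional $\GL_\beta$-character $\sigma$, so \emph{every} subspace of it is $\GL_\beta$-invariant, and this observation cannot ``force equality'' with the full weight space. The rigidity in the actual proofs comes from elsewhere: Derksen--Weyman reduce via Schofield's theory of general subrepresentations together with the multiplicative structure of the $s(f)$; Schofield--Van den Bergh and Domokos--Zubkov take more classical-invariant-theoretic routes. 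Your sketch names these sources but does not carry any of them out, and the dimension-matching step you defer --- bijecting Cauchy summands in a given weight space with explicit projective presentations $f$ and proving linear independence of the resulting $s(f)$ --- is precisely the hard part. As written, the proposal correctly identifies the landscape but does not supply a proof.
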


%It is easy to see that if $s(f)\neq 0$, then $f$ resolves some representation $M$, say of dimension $\alpha$
%$$0\to P_1 \xrightarrow{f} P_0\to M\to 0.$$
%From the long exact sequence
%\begin{equation} \label{eq:canseq} \Hom_Q(M,N)\hookrightarrow\Hom_Q(P_0,N)\xrightarrow{\Hom_Q(f,N)}\Hom_Q(P_1,N)\twoheadrightarrow\Ext_Q(M,N),
%\end{equation}
%we see that $\alpha$ and $\sigma$ are related by
%$\sigma(-)=-\innerprod{\alpha,-}_Q$, where $\innerprod{-,-}_Q$ is the {\em Euler form} of $Q$.
%In this case, we call $\sigma$ the weight vector corresponding to $\alpha$, and denote it by $ ^\sigma \alpha$;
%and conversely we call $\alpha$ the dimension vector corresponding to $\sigma$, and denote it by $^\alpha \sigma$.
%It also follows from \eqref{eq:canseq} that $s(f,N)\neq 0$ if and only if $\Hom_Q(M,N)=0$, or equivalently, $\Ext_Q(M,N)=0$.
%%In this case, we say $M$ is (left) {\em orthogonal} to $N$, denoted by $M\perp N$.
%
%
%Recall that a dimension vector $\alpha$ is called a {\em real} root if $\innerprod{\alpha,\alpha}_Q=1$.
%It is called {\em Schur} if $\Hom_Q(M,M)=k$ for
%$M$ general in $\Rep_\alpha(Q)$. For a real Schur root $\alpha$, $\Rep_{n\alpha}(Q)$ has a dense orbit for any $n\in \mb{N}$.
%
%\begin{lemma}[{\cite[Lemma 1.7]{Fs1}}] \label{L:dim=1}
%	If $\alpha$ is a real Schur root, then $\SI_\beta(Q)_{-\innerprod{n\alpha,-}}\cong k$ for any $n\in \mb{N}$.
%\end{lemma}

Let $\Sigma_\beta(Q)$ be the set of all weights $\sigma$ such that $\SI_\beta(Q)_\sigma$ is non-empty.
It is known that such a weight must correspond to some dimension vector $\alpha$, that is, $\sigma(-)=-\innerprod{\alpha,-}$ where $\innerprod{-,-}$ is the {\em Euler form} of $Q$.
A weight is called {\em extremal} in $\Sigma_\beta(Q)$ if it lies on an extremal ray of $\mb{R}_+\Sigma_\beta(Q)$.

\begin{lemma}[{\cite[Lemma 1.8]{Fs1}}] \label{L:irreducible} If $\sigma$ is an indivisible extremal weight, then any semi-invariant function of weight $\sigma$ is irreducible.
\end{lemma}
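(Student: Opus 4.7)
The plan is to exploit the grading of $\SI_\beta(Q)$ by the character lattice $\mathbb{Z}^{Q_0}$ and combine it with two convexity/lattice facts about the cone $\mathbb{R}_+\Sigma_\beta(Q)$.

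First I would reduce the problem to a statement about weights. Suppose $f \in \SI_\beta(Q)_\sigma$ factors as $f = gh$ in $k[\Rep_\beta(Q)]$. Since $k[\Rep_\beta(Q)]$ is a $\mathbb{Z}^{Q_0}$-graded integral domain (graded by the characters of $\GL_\beta$, and $\mathbb{Z}^{Q_0}$ is torsion-free), a standard highest-weight argument shows that both $g$ and $h$ must themselves be homogeneous, i.e.\ semi-invariants of some weights $\sigma_1, \sigma_2$ with $\sigma_1 + \sigma_2 = \sigma$. Concretely, after choosing a linear functional separating the finitely many weights appearing in the decompositions of $g$ and $h$, we reduce to the $\mathbb{Z}$-graded case, and then the top- and bottom-degree components of $gh$ force $g,h$ to be pure weight.

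Next I would use extremality. Each $\sigma_i$ is either $0$ (if the corresponding factor is a scalar) or lies in $\Sigma_\beta(Q)$. Since $\sigma$ spans an extremal ray of $\mathbb{R}_+\Sigma_\beta(Q)$, and a ray is extremal precisely when a decomposition $\sigma = \sigma_1 + \sigma_2$ with $\sigma_1, \sigma_2$ in the cone forces $\sigma_1, \sigma_2$ to lie on that same ray, we get $\sigma_1 = t_1\sigma$ and $\sigma_2 = t_2\sigma$ with $t_1, t_2 \in \mathbb{R}_{\geq 0}$ and $t_1 + t_2 = 1$. Now the indivisibility hypothesis enters: the lattice points on $\mathbb{R}_+\sigma$ are exactly $\mathbb{Z}_{\geq 0}\sigma$, so $t_1, t_2$ must be nonnegative integers summing to $1$. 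Hence one of $\sigma_1,\sigma_2$ is $0$.

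Finally, a weight-zero semi-invariant is a $\GL_\beta$-invariant on $\Rep_\beta(Q)$, and since $Q$ has no oriented cycles, this invariant ring is just $k$ (as recalled after equation \eqref{eq:char}). So one of $g,h$ is a nonzero constant, proving $f$ is irreducible.

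The main obstacle, as I see it, is the first step: verifying cleanly that a homogeneous element of a $\mathbb{Z}^{Q_0}$-graded domain has homogeneous factors. This is standard but needs the torsion-freeness of the grading group and a choice of separating linear functional; everything afterward is a short geometric argument about the cone of weights together with an elementary lattice observation. No use of the concrete Schofield description $s(f)$ from Theorem \ref{T:inv_span} is needed.
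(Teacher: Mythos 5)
Your overall strategy---reduce to weights, then use extremality plus indivisibility plus acyclicity---is the right one, and steps two through four are sound. But the justification of the first step is incorrect, and you flagged it yourself as the main obstacle, so it is worth being precise about what goes wrong.

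The polynomial ring $k[\Rep_\beta(Q)]$ is \emph{not} $\mathbb{Z}^{Q_0}$-graded by characters of $\GL_\beta$. The action of $\GL_\beta$ on $k[\Rep_\beta(Q)]$ is genuinely nonabelian and the ring decomposes into higher-dimensional irreducible $\GL_\beta$-modules, not one-dimensional character spaces. (It does decompose into weight spaces for the maximal torus $T_\beta$, but $T_\beta$-homogeneity of a factor does not make it a $\GL_\beta$-semi-invariant, so replacing the claimed grading with the $T_\beta$-grading does not rescue the step.) There are two ways to repair the argument. (a) Work entirely inside $\SI_\beta(Q) = k[\Rep_\beta(Q)]^{\SL_\beta}$, which \emph{is} $\mathbb{Z}^{Q_0}$-graded by $\GL_\beta$-characters; then your separating-functional argument applies verbatim and yields homogeneity of any factorization $f=gh$ with $g,h\in\SI_\beta(Q)$. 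Since the paper only ever needs irreducibility in $\SI_\beta(Q)$ (it is combined with factoriality of $\SI_\beta(Q)$ from \cite{PV} to get codimension-two statements), this is sufficient. (b) If you want irreducibility already in $k[\Rep_\beta(Q)]$, use the standard fact that a semi-invariant of a connected group $G$ in a UFD has semi-invariant prime factors: $G$ permutes the prime divisors of $f$ up to units, this map from $G$ to a finite symmetric group is trivial by connectedness, so each prime factor $p$ satisfies $x\cdot p = c_p(x)\,p$ with $c_p(x)\in k[\Rep_\beta(Q)]^\times = k^\times$, making $c_p$ a character and $p$ a semi-invariant. Either fix leaves the remainder of your argument intact: homogeneity forces $\sigma = \sigma_1 + \sigma_2$ with both $\sigma_i$ in $\mathbb{R}_+\Sigma_\beta(Q)$ (a pointed cone, since a pair of semi-invariants with opposite nonzero weights would multiply to a unit of the polynomial ring, hence a constant, forcing both weights to vanish), extremality plus indivisibility forces $\{\sigma_1,\sigma_2\}=\{0,\sigma\}$, and $\SI_\beta(Q)_0=k$ for acyclic $Q$ finishes it.
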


\begin{example} \label{ex:Slm} Let $S_l^m$ be the $m$-tuple flag quiver of length $l$. 
	$$\mflag{l}$$
The $i$-th vertex of the $a$-th flag of $S_l^m$ is denoted by a pair $\sm{a \\ i}$ (with the convention that $\sm{a\\l}=l$ for any $a$).
The {\em standard} dimension vector $\bl$ is the one such that $\bl\sm{a\\i}=i$.
For any $(i,j,k)\in\mb{Z}_{\geq 0}^3$ with $i+j+k=l$,
let $s_{i,j,k}^{a,b,c}$ be the Schofield's semi-invariant function on $\Rep_\bl(S_l^m)$ associated to the presentation 
$$P_l\xrightarrow{(p_{i}^a, p_{j}^b, p_{k}^c)} P_i^a \oplus P_j^b \oplus P_k^c,$$
where $p_i^a$ is the unique path from the $i$-th vertex of the $a$-th flag to the vertex $l$.
Each $s_{i,j,k}^{a,b,c}$ has weight equal to $\sigma_{i,j,k}^{a,b,c}:=\e_l-\e_i^a-\e_j^b-\e_k^c$, which is clearly extremal.
By our convention we ignore $P_i^a,p_i^a$, and $\e_i^a$ if $i=0$.
Moreover, each weight $\sigma:=\sigma_{i,j,k}^{a,b,c}$ corresponds to a real Schur root of $S_l^m$ 
so the weight space $\SI_\bl(S_l^m)_{\sigma}$ is $1$-dimensional \cite[Lemma 1.7]{Fs1}.
\end{example}

Let $\ep$ be a real Schur root, and $E$ be the general representation in $\Rep_\ep(Q)$.
We recall that $\ep$ is a {\em Schur root} means that $\Hom_Q(E,E)=k$, and it is called {\em real} if $\innerprod{\ep,\ep}=1$. 
In \cite{S1} Schofield defined a quiver $Q_\ep$ with dimension vector $\beta_\ep$ associated to $\ep$. 
The semi-invariant ring $\SI_{\beta_\ep}(Q_\ep)$ can be identified as a subalgebra $S:=\bigoplus_{\innerprod{\ep,\alpha}=0}\SI_\beta(Q)_{-\innerprod{\alpha,-}}$ of $\SI_\beta(Q)$.
Let $s$ be any semi-invariant function spanning the weight space $\SI_\beta(Q)_{-\innerprod{\ep,-}}$.
In connection with Lemma \ref{L:frext=}, we have the following proposition.
\begin{proposition} \label{P:localSI} Assume that the weight corresponding to $\ep$ spans an extremal ray in $\Sigma_\beta(Q)$.
Then the algebra morphism defined by
$$\varphi: \SI_{\beta_\epsilon}(Q_\ep)[x^{\pm 1}]\to \SI_\beta(Q)_{s}\quad \text{by}\quad rx^d\mapsto \iota(r)s^d.$$
gives an isomorphism
	$$\SI_\beta(Q)_{s} \cong \SI_{\beta_\epsilon}(Q_\ep)[x^{\pm 1}].$$
\end{proposition}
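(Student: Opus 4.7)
The plan is to verify that $\varphi$ is a well-defined $\mb{Z}^{Q_0}$-graded $k$-algebra homomorphism (with $\deg x := -\innerprod{\epsilon,-}$ matching the weight of $s$), then argue injectivity by a weight-counting argument and surjectivity by an $s$-adic divisibility argument.

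For injectivity, a weight-homogeneous $rx^d$ with $\iota(r)\in S_{-\innerprod{\alpha,-}}$ (so $\innerprod{\epsilon,\alpha}=0$) has image $\iota(r)s^d$ of weight $-\innerprod{\alpha+d\epsilon,-}$. Pairing with $\epsilon$ would recover $d=\innerprod{\epsilon,\alpha+d\epsilon}$ using $\innerprod{\epsilon,\epsilon}=1$, and then $\alpha$; since the Euler form is non-degenerate, distinct weight-homogeneous summands land in distinct weight spaces of $\SI_\beta(Q)_s$, forcing each summand of any kernel element to vanish.

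For surjectivity, I would take $u\in\SI_\beta(Q)_s$ homogeneous of weight $-\innerprod{\gamma,-}$ and write $u=t/s^N$ with $t\in\SI_\beta(Q)$ of weight $-\innerprod{\gamma+N\epsilon,-}$. Setting $d:=\innerprod{\epsilon,\gamma}$ and $r:=u/s^d=t/s^{N+d}$, the element $r$ has weight $-\innerprod{\gamma-d\epsilon,-}$, which is $\epsilon$-orthogonal; hence $r\in S$ as soon as $r\in\SI_\beta(Q)$. After replacing $t,N$ by $t\cdot s^K, N+K$ for some large $K$, we may assume $M:=N+d\geq 0$, and surjectivity reduces to the divisibility statement
\begin{equation*}
(*)\qquad t'\in\SI_\beta(Q)_{-\innerprod{\gamma',-}},\ M':=\innerprod{\epsilon,\gamma'}\geq 0\ \Longrightarrow\ s^{M'}\mid t'\ \text{in}\ \SI_\beta(Q).
\end{equation*}

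The main obstacle is proving $(*)$. Since $-\innerprod{\epsilon,-}$ is indivisible and extremal, Lemma \ref{L:irreducible} makes $s$ irreducible; combined with the standard factoriality of $\SI_\beta(Q)$ (a consequence of $\SL_\beta$ being semisimple simply connected and acting on the factorial affine space $\Rep_\beta(Q)$), $s$ is prime and the $s$-adic valuation $v_s$ is well-defined. By Theorem \ref{T:inv_span}, it would suffice to verify $v_s(s(f))\geq M'$ for every Schofield presentation $f$ of weight $-\innerprod{\gamma',-}$. Writing $f_\epsilon$ for the minimal projective presentation of the general $E\in\Rep_\epsilon(Q)$, so that $s(f_\epsilon)=s$ up to scalar, Schofield's general decomposition theorem, applied via the extremality of the ray through $-\innerprod{\epsilon,-}$ and the Schur-rootedness of $\epsilon$, should yield a stable equivalence $f\sim f_0\oplus f_\epsilon^{\oplus M'}$ with $f_0$ of weight $-\innerprod{\gamma'-M'\epsilon,-}$, whence $s(f)=s(f_0)\cdot s^{M'}$. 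Producing this decomposition of $f$ is the technical core, and it is precisely where the extremality hypothesis is genuinely used.
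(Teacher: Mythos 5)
Your injectivity argument is a weight-counting argument that differs from the paper's (which cites \cite[Lemma 8.2]{Fs1}), but it is correct: pairing the weight $-\langle\alpha+d\epsilon,-\rangle$ with $\epsilon$ recovers $d$ (using $\langle\epsilon,\epsilon\rangle=1$), and non-degeneracy of the Euler form then recovers $\alpha$, so distinct homogeneous summands land in distinct weight spaces.

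The surjectivity reduction is also fine: you correctly reduce, via Theorem \ref{T:inv_span}, to showing the required $s$-divisibility for Schofield semi-invariants $s(f)$, after normalizing by a power of $s$ so that $M'\geq 0$.

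The gap is in the final step. The decomposition $f\sim f_0\oplus f_\epsilon^{\oplus M'}$, i.e.\ $\Coker f\cong E^{M'}\oplus \Coker f_0$ with $\Coker f_0\in E^{\perp}$, is \emph{not} what extremality of $\epsilon$'s weight together with Schur-rootedness of $\epsilon$ gives you, and I do not believe it holds in general. What extremality gives (per the proof of \cite[Proposition 3.6]{Fs2}, as the paper points out) is that the universal evaluation $hE\to M$ is \emph{injective}, where $M=\Coker f$ and $h=\dim\Hom_Q(E,M)$. But $E^h$ need not split off as a direct summand of $M$, and the quotient $C=M/hE$ need not lie in $E^{\perp}$, since $\Ext_Q(E,C)$ can be nonzero (in fact $\Ext_Q(E,C)\cong\Ext_Q(E,M)$). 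The paper's actual construction circumvents this: it passes from $M$ to a genuinely different module $\tilde M$ — neither a summand nor a subquotient of $M$ — by first forming $C=M/hE$ and then taking the universal extension $0\to C\to\tilde M\to eE\to 0$ with $e=\dim\Ext_Q(E,M)$, which \emph{does} land in $E^\perp$. The multiplicativity of Schofield semi-invariants across these two short exact sequences (\cite[Lemma 1]{DW1}) then yields $s(\tilde f)=s(f)\,s^{\,e-h}$ up to scalar, so $s(f)=s(\tilde f)\,s^{\,h-e}$ with $s(\tilde f)\in S$. Your desired divisibility $(*)$ is a consequence of this identity, but cannot be obtained from a direct-sum decomposition of $f$; replacing the claimed decomposition with the paper's quotient-plus-universal-extension construction closes the gap.
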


\begin{proof} The injectivity is proved in \cite[Lemma 8.2]{Fs1}.
To show surjectivity, by Theorem \ref{T:inv_span} it suffices to show that
for any $s(f)\in \SI_\beta(Q)$, we can always rewrite it as $s(f)s^c s^{-c}$ such that $s(f)s^c\in S$.
Let $M$ be the cokernel of $f$.
Since the weight corresponding to $\ep$ is extremal, the universal homomorphism $hE\to M$ must be injective (see the proof of \cite[Proposition 3.6]{Fs2}), where $h=\dim\Hom_Q(E,M)$.
Let $C$ be the cokernel of $hE\hookrightarrow M$, then we consider the universal extension $0\to C\to \tilde{M} \to eE\to 0$, where $e=\dim\Ext_Q(E,M)$.
By construction, we have that $\tilde{M}\in E^\perp$. Let $\tilde{f}$ be the minimal resolution of $\tilde{M}$.
By \cite[Lemma 1]{DW1}, up to a scalar $s(\tilde{f})=s(f) s^c \in S$, where $c=e-h=-\innerprod{\epsilon,\alpha}$.
\end{proof}

\subsection{From $\mc{A}$ to Complete Triple Flags} \label{ss:triple}
We denote the linear quiver $S_l^1$ by $A_l$.
Let $\Rep_\bl^\circ(A_l)$ be the open subset of $\Rep_\bl(A_l)$ where each linear map is injective.
Let $F^-$ (resp. $F^+$) be the representation of $A_l$ given by $(I_k,0)$ (resp. $(0,I_k)$) for the arrow $k\to k+1$,
where $I_k$ is the $k\times k$ identity matrix and $0$ is the zero column vector.

Throughout we set $G:=\SL_l$. Let $U^-$ (resp. $U^+$) and $H$ be the subgroups of lower (resp. upper) triangular and diagonal matrices in $G$. 
Let $W$ (resp. $\hat{W}$) be the Weyl group of $G$ (resp. $\GL_l$), and $w_0$ be the longest element in $\hat{W}$.
We can realize $\hat{W}$ inside $\GL_l$ as the set of permutation matrices.
For any $m\in\mb{N}$, we decompose $\SL_{\bl}$ as $\SL_{\gamma_{l}} \times \SL_l$.
We denote by $\mc{A}$ the categorical quotient $\Rep_\bl(A_l)/\SL_{\gamma_{l}}$.
The group $\SL_{\gamma_{l}}$ acts freely on $\Rep_\bl^\circ(A_l)$ so $\mc{A}^\circ:=\Rep_\bl^\circ(A_l)/\SL_{\gamma_{l}}$ is a geometric quotient. 
By abuse of notation, we still write $F^-$ and $F^+$ for the corresponding points on $\mc{A}$. 
Since the action of $\GL_l$ commutes with that of $\SL_{\gamma_{l}}$, $\GL_l$ acts on $\mc{A}$ as well.

\begin{lemma} \label{L:stab} 
$GF^-$ is the only $G$-orbit in $\mc{A}^\circ$ and the stabilizer of $F^-$ (in $G$) is $U^-$.
Moreover, $F^+=w_0F^-$ so its stabilizer is $U^+$.
\end{lemma}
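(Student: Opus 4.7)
The plan is to identify $\mc{A}^\circ$ $G$-equivariantly with the variety of \emph{decorated complete flags} in $V_l = k^l$ (namely tuples consisting of a complete flag $W_1 \subset W_2 \subset \cdots \subset W_{l-1} \subset V_l$ together with a nonzero top form $\omega_k \in \det W_k$ on each proper subspace), and then read the two claims off a standard stabilizer calculation on this space.

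First I would set up the identification. Given $\phi_\bullet \in \Rep_\bl^\circ(A_l)$, the compositions $\psi_k = \phi_{l-1} \circ \cdots \circ \phi_k \colon V_k \hookrightarrow V_l$ realize $V_k$ as a $k$-dimensional subspace $W_k \subset V_l$, producing a complete flag $W_\bullet$. The group $\SL_{\gamma_l} = \prod_{k < l} \SL(V_k)$ acts on $\psi_k$ by right-translation through $\SL(V_k)$; since $\SL(V_k)$ acts simply transitively on bases of $V_k$ with a prescribed nonzero top form, the $\SL_{\gamma_l}$-orbit of $\phi_\bullet$ retains precisely the flag $W_\bullet$ together with the pushforward volume $\omega_k = \psi_k(e_1 \wedge \cdots \wedge e_k) \in \det W_k$. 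The residual $G$-action on $\phi_{l-1}$ (by left multiplication on $V_l$) translates to the natural $G$-action on decorated flags.

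Next I would analyze the $G$-action on the decorated flags. Since $G = \SL_l$ acts transitively on complete flags (classical), it suffices to work with the standard flag corresponding to $F^-$, whose stabilizer in $G$ is a Borel $B = HU$. A direct wedge computation shows that the unipotent part $U$ fixes every standard $\omega_k$, while the torus $H \cap \SL_l$ acts on $(\omega_1, \ldots, \omega_{l-1})$ through the map $\op{diag}(h_1,\ldots,h_l) \mapsto (h_1,\, h_1 h_2,\, \ldots,\, h_1 \cdots h_{l-1})$, which is an isomorphism onto $(\mb{G}_m)^{l-1}$ (trivial kernel using the $\SL$ condition, and manifestly surjective). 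Combined with flag-level transitivity, this yields transitivity of $G$ on decorated flags, so $\mc{A}^\circ = G F^-$, and it pins down the stabilizer of $F^-$ as exactly $U$, which is $U^-$ in the paper's convention.

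For the final claim, I would check $F^+ = w_0 F^-$ by verifying that both representations produce the same decorated flag $W_k = \langle e_{l-k+1}, \ldots, e_l\rangle$, after a matching $\SL_{\gamma_l}$-change of basis in the lower vertices and with a suitable lift of the longest Weyl element into $\SL_l$. The stabilizer statement then follows by equivariance: $\op{Stab}_G(F^+) = w_0 U^- w_0^{-1} = U^+$, since conjugation by $w_0$ interchanges lower and upper (uni)triangular subgroups. The main obstacle I expect is the sign bookkeeping in the identity $F^+ = w_0 F^-$: because the quotient is by $\SL$ rather than $\GL$, the volume forms carry orientations that must be matched on the nose, which forces a specific choice of lift of $w_0 \in W$ to $\SL_l$ (up to a compatible element of $H$). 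Everything else reduces to transitivity and stabilizer computations on $G/U$.
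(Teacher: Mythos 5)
Your proposal is correct, and it takes a genuinely different route from the paper. The paper's proof is a terse direct matrix argument: it asserts transitivity of $G$ on $\mc{A}^\circ$ as an ``easy exercise on matrix manipulation,'' verifies by induction (using the block identity $\hat{u}^{-1}(I_{l-1},0)u = (I_{l-1},0)$) that $U^-$ fixes $F^-$, rules out the complement by arguing that no nontrivial element of $WHU^+$ can fix $F^-$, and asserts $F^+ = w_0 F^-$ as another ``easy exercise.'' You instead first identify $\mc{A}^\circ$ with the variety of decorated complete flags (equivalently $G/U^-$), and then derive both transitivity and the stabilizer from the classical transitivity of $G$ on complete flags together with an explicit torus calculation $\op{diag}(h_1,\dots,h_l) \mapsto (h_1, h_1h_2, \dots, h_1\cdots h_{l-1})$ showing $H$ acts simply transitively on the decorations over a fixed flag. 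Your approach is more conceptual and explains structurally why the stabilizer is a maximal unipotent (it essentially unpacks the assertion in the paper's introduction that $\mc{A}$ is the base affine space $G/U$), whereas the paper's argument is more elementary and self-contained. Both proofs pass over the same small subtlety in $F^+ = w_0 F^-$ (a possible sign discrepancy in the decorations when $w_0$ is lifted to a permutation matrix rather than to a compatible element of $N(H)$), which you correctly flag; this does not affect the stabilizer conclusion, since twisting by an element of $H$ conjugates $U^-$ to itself.
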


\begin{proof} It is an easy exercise on matrix manipulation to show that $G$ acts transitively on $\mc{A}^\circ$.
	So $GF^-$ is the only $G$-orbit on $\mc{A}^\circ$.
	For any $u\in U^-$ we have that $\hat{u}^{-1}(I_{l-1},0)u = (I_{l-1},0)$ where $\hat{u}$ is the upper left $l-1\times l-1$ submatrix of $u$.
We see by induction that $U^-$ stabilizes $F^-$.   
On the other hand, there is no $\hat{g}\in \SL_{l-1}$ such that $\hat{g}^{-1}(I_{l-1},0)g = (I_{l-1},0)$ if $g\in WHU^+\setminus\{e\}$.
Hence, the stabilizer of $F^-$ is $U^-$. 
It is another easy exercise to show that $F^+=w_0F^-$.
\end{proof}

Let $s_{i,j}^{1,2}$ ($i+j=l$) be the semi-invariant function on $\Rep_{\bl}(S_l^2)$ defined in Example \ref{ex:Slm}.
Let $D=\prod_{i,j} s_{i,j}^{1,2}$. We denote $\Rep_\bl^\circ(S_l^2)$ by the complement of $Z(D)$ in $\Rep_\bl(S_l^2)$.
Recall the determinantal definition of each $s_{i,j}^{1,2}$.
It is not hard to see that $\Rep_\bl^\circ(S_l^2)$ consists of those representations equivalent to $(F^-,F^+)$, that is
\begin{equation} \label{eq:orbit} \Rep_\bl^\circ(S_l^2) = \GL_\bl(F^-,F^+). \end{equation}
Here, $(F^-,F^+)$ is the representation whose restriction to the first and second flags is $F^-$ and $F^+$ respectively.
Let $\mc{D}$ be the zero locus of $D$ on $\mc{A}\times\mc{A}$.
Since localization with respect to an invariant commutes with taking invariants, we have that
\begin{equation} \label{eq:open} (\mc{A}\times \mc{A}) \setminus \mc{D} = \Rep_\bl^\circ(S_l^2)/\SL_{\gamma_{l}}=(\mc{A}^\circ\times \mc{A}^\circ)\setminus \mc{D}. \end{equation}
A pair $(A_1,A_2)\in \mc{A}\times \mc{A}$ is called {\em generic} if it is not in $\mc{D}$.

\begin{corollary} \label{C:doubleflag} $G$ acts freely on $(\mc{A}\times \mc{A}) \setminus \mc{D}$ and its quotient is isomorphic to $H$.
\end{corollary}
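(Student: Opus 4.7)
The plan is to construct an explicit $G$-equivariant isomorphism
$$\phi\colon G \times H \xrightarrow{\ \sim\ } (\mc{A}\times\mc{A})\setminus\mc{D},\qquad (g,t)\mapsto (gF^-,\,gtF^+),$$
where $G$ acts on the source by left translation on the first factor. Since left translation is free with quotient $H$, both the freeness of the $G$-action on $(\mc{A}\times\mc{A})\setminus\mc{D}$ and the identification of the quotient with $H$ fall out simultaneously.

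Well-definedness of $\phi$ reduces to $(F^-,tF^+)\notin\mc{D}$ for every $t\in H$. A direct computation of the Schofield semi-invariants $s_{i,j}^{1,2}$ on $(F^-,tF^+)$, using the explicit form of $F^\pm$, identifies each $s_{l-j,j}^{1,2}(F^-,tF^+)$ with a product of a subset of the diagonal entries of $t$, hence nonzero. Since the characters of the $s_{i,j}^{1,2}$ are trivial on $\SL_l$, these functions are $G$-invariant; in particular $\mc{D}$ is $G$-stable and so $\phi(g,t)\notin\mc{D}$ for every $(g,t)$.

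For surjectivity of $\phi$, take $(A_1,A_2)\notin\mc{D}$. By Lemma~\ref{L:stab} the action of $G$ on $\mc{A}^\circ$ is transitive, so $A_1=gF^-$, and then $g^{-1}A_2=hF^+$ for some $h\in G$. Running the Schofield computation once more translates the condition $(F^-,hF^+)\notin\mc{D}$ into the non-vanishing of a complete set of principal minors of $h$---the classical minor criterion for $h$ to lie in the open Bruhat cell $U^-HU^+$. Writing $h=u^-tu^+$ uniquely and using $u^{\pm}\in U^{\pm}=\op{Stab}_G(F^{\pm})$ to absorb the unipotent factors into the $G$-action on $(F^-,tF^+)$, one obtains $(A_1,A_2)=\phi(gu^-,t)$. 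Injectivity of $\phi$ follows from the uniqueness of the Bruhat decomposition together with $U^-\cap HU^+=\{1\}$: the constraints $g'^{-1}g\in U^-$ and $t'^{-1}g'^{-1}gt\in U^+$ jointly force $g=g'$, $t=t'$.

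Finally, to upgrade the bijection $\phi$ to an isomorphism of varieties I would exhibit its inverse explicitly. The coordinates of $t$ are recovered as successive ratios of the $G$-invariants $s_{i,j}^{1,2}(A_1,A_2)$, after which the class $gU^-$ is read off from $A_1\in\mc{A}^\circ\cong G/U^-$; together these assemble into a morphism $(\mc{A}\times\mc{A})\setminus\mc{D}\to G\times H$ which one checks is inverse to $\phi$ on points, so the bijection is an iso of varieties. The main technical obstacle is the cell-characterization step---verifying that $(F^-,hF^+)\notin\mc{D}$ is precisely the condition that $h$ lies in the open Bruhat cell $U^-HU^+$---which requires identifying the Schofield determinants $s_{i,j}^{1,2}$ with specific principal minors of $h$ and invoking the classical minor criterion for the big Bruhat cell.
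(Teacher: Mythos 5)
Your proof is correct but takes a different and more explicit route than the paper's. The paper deduces freeness by combining \eqref{eq:orbit} (that $\Rep_\bl^\circ(S_l^2)$ is a single $\GL_\bl$-orbit of $(F^-,F^+)$) with the decomposition $\GL_{\gamma_l}\cong\SL_{\gamma_l}\rtimes H^2$ to reduce to stabilizers of points of the form $(h_1F^-,h_2F^+)$, where Lemma~\ref{L:stab} gives the stabilizer $U^-\cap U^+=\{e\}$; the quotient claim is then dismissed as an easy exercise. You instead build the $G$-equivariant map $\phi\colon G\times H\to(\mc{A}\times\mc{A})\setminus\mc{D}$, $(g,t)\mapsto(gF^-,gtF^+)$, and check it is an isomorphism, which establishes freeness and identifies the quotient with $H$ simultaneously. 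Your surjectivity argument (transitivity on $\mc{A}^\circ$ from Lemma~\ref{L:stab} followed by the big-Bruhat-cell minor criterion) is essentially the explicit version of the orbit statement that the paper cites via \eqref{eq:orbit}, so your account is more self-contained and actually proves the part the paper omits; what the paper's approach buys in exchange is brevity, leaning on facts already recorded. One caveat: the cell-characterization step you yourself flag as the main technical obstacle (matching the Schofield determinants $s_{i,j}^{1,2}$ on $(F^-,hF^+)$ against the principal minors of $h$) is precisely the unproven assertion the paper makes just before \eqref{eq:orbit}, so you should write out that determinant identification if you want a fully complete argument rather than one that matches the paper's level of rigor.
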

\begin{proof} %It is equivalent to show that $\SL_\bl$ acts freely on $\Rep_\bl^\circ(S_l^2)$. 
Since $\GL_{\gamma_{l}}\cong \SL_{\gamma_{l}} \rtimes (k^*)^{2l-2}\cong \SL_{\gamma_{l}} \rtimes H^2$, 
by \eqref{eq:orbit} and \eqref{eq:open} we can pick without lose of generality a point 
$x\in (\mc{A}\times \mc{A}) \setminus \mc{D}$ represented by $(h_1F^-,h_2F^+)$ for some $(h_1,h_2)\in H^2$.
Since $hU^\pm h^{-1}=U^\pm$, the action of $H^2$ does not change the stabilizer.
So by Lemma \ref{L:stab} the stabilizer of $x$ for the $G$-action is equal to $U^-\cap U^+=\{e\}$.
We do not need the result on the quotient so we leave it as an easy exercise.
\end{proof}

We denote $T_l:=S_l^3$. Let $\b{s}_l(i,j,k)=s_{i,j,k}^{1,2,3}$ for $(i,j,k)\in (\Delta_l)_0$. 
A main result in \cite{Fs1} is Theorem \ref{T:triple} below. 
Now we reprove this assuming a few results in \cite{Fs1,Fs2} but without \cite{KT}.
We first recall that 
\begin{lemma}[\cite{Fs1}] \label{L:subTl} The upper cluster algebra $\uca(\Delta_l,\b{s}_l)$ is a subalgebra of $\SI_\bl(T_l)$.
\end{lemma}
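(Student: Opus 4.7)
The plan is to apply Lemma \ref{L:RCA} with $R = \SI_\bl(T_l)$, which is a finitely generated normal $k$-domain as a standard ring of semi-invariants under a reductive group action. It then suffices to verify that the seed $(\Delta_l,\b{s}_l)$ is CR1 in $R$ in the sense of Definition \ref{D:CR1}.

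First I would check that the initial extended cluster $\b{s}_l$ and every single mutation $s'_u$ ($u \in (\Delta_l)_\mu$) sit inside $\SI_\bl(T_l)$. The initial entries are Schofield semi-invariants by construction. For a single mutation at a mutable vertex $u=(i,j,k)$ (with $i,j,k>0$), the weight of the exchange product $\prod_{v\to u} s_v + \prod_{u\to w}s_w$ equals $\sigma_u + \tau$ for a specific extremal indivisible weight $\tau$ which I would identify as $\sigma_{i',j',k'}^{a,b,c}$ for the appropriate Schofield presentation. Since the weight space $\SI_\bl(T_l)_{\sigma_u+\tau}$ decomposes as a finite direct sum of products of one-dimensional weight spaces of extremal weights, I match both sides and read off a unique (up to normalization) Schofield semi-invariant $s'_u$ of weight $\tau$ such that the exchange relation holds on the nose in $\SI_\bl(T_l)$.

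Second, I would verify the coprime-in-codimension-one condition. By Lemma \ref{L:irreducible}, each Schofield semi-invariant associated to an extremal indivisible weight is irreducible in $\SI_\bl(T_l)$. Any two distinct elements of $\b{s}_l$ have distinct extremal weights, hence their zero loci are distinct irreducible hypersurfaces and the pair is coprime in codimension one. The same argument handles $(s_u, s'_u)$ because $s'_u$ is itself a Schofield semi-invariant of an extremal indivisible weight, distinct from that of $s_u$.

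The main obstacle is the first step: establishing the exchange identity at a mutable hive vertex as an actual determinantal identity between Schofield semi-invariants. Concretely, one must verify a three-term Plücker-type relation of the form
\[
 s_{i,j,k}^{1,2,3}\cdot s'_u \;=\; \prod_{v\to u} s_v \;+\; \prod_{u\to w} s_w,
\]
which requires using the explicit structure of the rigid potential $W_l$ on $\Delta_l$ and the behavior of $\Hom_Q(-,N)$ along canonical short exact sequences of projective presentations. This is the only genuinely computational ingredient; once it is in place the CR1 verification, and hence the containment $\uca(\Delta_l,\b{s}_l)\subseteq \SI_\bl(T_l)$, follow formally from Lemma \ref{L:RCA} and Lemma \ref{L:irreducible}.
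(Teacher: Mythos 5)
Your overall framework is the right one and matches the paper's: the paper does not reprove Lemma \ref{L:subTl} here (it is imported from \cite{Fs1}), but the fully written-out analogue, Lemma \ref{L:contain}, proceeds exactly as you propose — invoke Lemma \ref{L:RCA}, verify the CR1 conditions, and get the coprimality from Lemma \ref{L:irreducible} together with factoriality of $\SI_\bl$ (\cite{PV}). The hive-interior case of the CR1 check is precisely what \cite[Lemma 8.10]{Fs1} supplies.

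That said, there are two genuine problems in the middle of your argument. First, the passage claiming you can "match both sides and read off a unique (up to normalization) Schofield semi-invariant $s_u'$ of weight $\tau$" is not a proof that $s_u'\in\SI_\bl(T_l)$. Weight bookkeeping tells you what weight $s_u'$ must have, but not that $s_u$ divides the exchange sum inside $\SI_\bl(T_l)$; the weight space $\SI_\bl(T_l)_{\sigma_u+\tau}$ does not in general decompose as a sum of products of one-dimensional extremal weight spaces, and there is no ``reading off.'' What is actually required — and what \cite[Lemma 8.10]{Fs1} does, and what the paper does for the diagonal case inside the proof of Lemma \ref{L:contain} — is to \emph{exhibit} a concrete projective presentation, declare $s_u'$ to be the corresponding Schofield determinant, and then \emph{verify} the exchange relation as a determinantal identity, for instance by restricting to a dense open set where two of the flags are $F^-$ and $F^+$. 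You flag this as ``the main obstacle,'' but it is not an auxiliary detail: it is the entire content of the lemma, and without it the CR1 hypothesis is simply unverified. Second, your identification of $\tau$ as $\sigma^{a,b,c}_{i',j',k'}$ is incorrect at interior hive vertices. A direct computation from the hive arrows gives
\begin{equation*}
\wt(s_{i,j,k}') \;=\; 2\e_l - \e_{i-1}^1 - \e_{i+1}^1 - \e_{j-1}^2 - \e_{j+1}^2 - \e_{k-1}^3 - \e_{k+1}^3,
\end{equation*}
which has $\e_l$-coefficient $2$ and is therefore not of the form $\e_l-\e_{i'}^a-\e_{j'}^b-\e_{k'}^c$. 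Its extremality (which the paper does assert when proving the coprime condition in Lemma \ref{L:contain}) has to be argued on its own — it cannot be read off from the shape of the weight as you suggest — and the Schofield presentation of $s_u'$ is correspondingly more involved than $P_l\to P_{i'}^a\oplus P_{j'}^b\oplus P_{k'}^c$.
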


Let $\bs{v}_c$ ($c=1,2,3$) be the set of frozen vertices on the edge opposite to the vertex $c$ of $\t$.
Let $D_c:= \prod_{v\in \bs{v}_c} \b{s}_l(v)$, and $H_c$ be the torus generated by $\b{s}_l(\bs{v}_c)^{\pm 1}$.
It follows from \cite[Example 3.10]{Fs2} and Proposition \ref{P:localSI} that
\begin{lemma} \label{L:localTl} The localization of $\SI_\bl(T_l)$ at any $D_c$ can be identified with $k[\mc{A}\times H_c]$.
	Moreover, $k[\mc{A}]$ is the upper cluster algebra $\uca(\Delta_l^{\flat},\b{s}_l^\flat)$.
\end{lemma}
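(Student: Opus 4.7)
The plan is to deduce both assertions by iterated application of Proposition \ref{P:localSI}, and to identify the residual semi-invariant ring geometrically using Corollary \ref{C:doubleflag}.

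For the first assertion, Example \ref{ex:Slm} shows that each $\b{s}_l(v)$ with $v\in\bs{v}_c$ is, up to scalar, the unique semi-invariant of the indivisible extremal weight $\sigma_{i,j,0}^{a,b,c}$, which corresponds to a real Schur root $\epsilon_v$ of $T_l$. Proposition \ref{P:localSI} therefore applies, giving
\[ \SI_\bl(T_l)_{\b{s}_l(v)}\cong \SI_{\beta_{\epsilon_v}}((T_l)_{\epsilon_v})\bigl[x_v^{\pm 1}\bigr]. \]
Iterating over all $v\in\bs{v}_c$ (while checking at each stage that the remaining edge weights stay extremal under Schofield reduction), I would arrive at $\SI_\bl(T_l)_{D_c}\cong R\bigl[\b{s}_l(\bs{v}_c)^{\pm 1}\bigr]$ for a residual semi-invariant ring $R$. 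I would then identify $R$ geometrically. The non-vanishing locus of $D_c$ in $\Rep_\bl(T_l)$ is precisely the locus where the two flags indexed by the vertices $a,b\neq c$ form a generic double flag, i.e., their restriction to $\Rep_\bl(S_l^2)$ lies in $\Rep_\bl^\circ(S_l^2)$; see \eqref{eq:orbit}. By Corollary \ref{C:doubleflag}, the $\SL_{\gamma_l}^2$-quotient of that locus is a free $G$-torsor over $H_c$; after including the third flag factor $\mc{A}$ and descending by $G$, the quotient becomes $\mc{A}\times H_c$, whence $R\cong k[\mc{A}]$.

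For the second assertion, I would run the same strategy one level down. Localize $\SI_\bl(S_l^2)$ at the product of the Schofield semi-invariants on one of its two edges. By the same iterated use of Proposition \ref{P:localSI}, together with Lemma \ref{L:stab} and \eqref{eq:open}, this localization is $k[\mc{A}]$ tensored with a Laurent polynomial ring on the inverted edge. On the cluster-algebra side, Lemma \ref{L:frext=} applied to the balanced extension obtained by refreezing the corresponding edge of $\Delta_l^\flat$ yields a matching Laurent relation between $\uca(\Delta_l^\flat,\b{s}_l^\flat)$ and the UCA of the two-edge-frozen quiver. Combining these with the $m=2$ case of the main theorem of \cite{Fs1} (which identifies $\SI_\bl(S_l^2)$ with the appropriate UCA) and comparing Laurent degrees in the inverted variables would yield $k[\mc{A}]=\uca(\Delta_l^\flat,\b{s}_l^\flat)$.

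The main obstacle will be verifying that extremality of the edge weights persists through each Schofield reduction so that Proposition \ref{P:localSI} truly applies iteratively, and pinning down $R$ as an algebra rather than merely as a variety. The inductive statement one wants is that $(T_l)_{\epsilon_v}$ is again a flag-type quiver with one fewer frozen vertex on the relevant edge, and that the surviving weights remain extremal there; this is essentially the content of \cite[Example 3.10]{Fs2}, so invoking that reference would collapse the first paragraph to a single appeal.
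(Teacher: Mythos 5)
Your first paragraph is essentially the right route and matches the paper's own account, which simply invokes Proposition~\ref{P:localSI} together with \cite[Example 3.10]{Fs2}: inverting $D_c$ corresponds geometrically to restricting to the locus where the two flags opposite to $c$ are in general position, Corollary~\ref{C:doubleflag} makes that locus a free $G$-torsor over $H_c$, and adjoining the third flag factor $\mc{A}$ and descending by $G$ gives $\mc{A}\times H_c$. You correctly flag that the iterated use of Proposition~\ref{P:localSI} requires tracking extremality, and correctly note that \cite[Example 3.10]{Fs2} packages exactly this; the paper does not unpack it further either.

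The second paragraph, however, has a genuine gap. There is no ``$m=2$ case of the main theorem of \cite{Fs1}'' in the sense you need. The quiver $S_l^2$ is of type $A$, so $\SI_\bl(S_l^2)$ is simply the polynomial ring $k[s_{1,l-1}^{1,2},\dots,s_{l-1,1}^{1,2}]$ in $l-1$ variables (and, as Corollary~\ref{C:doubleflag} shows, its localization at the product of those functions is $k[H]$). In contrast, the quiver $\Delta_l^\flat$ has $\binom{l-1}{2}$ mutable vertices, which is nonzero for $l\geq 3$, so $\uca(\Delta_l^\flat,\b{s}_l^\flat)$ is not a polynomial ring and cannot be recovered by ``running the same strategy one level down'' inside the double flag. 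Worse, even if one tried to get at $k[\mc{A}]$ this way, one would be leaning (implicitly or explicitly) on the $m=3$ result of \cite{Fs1}, which is exactly what Theorem~\ref{T:triple} is in the process of reproving. This is precisely the circularity the paper warns against in the remark after Lemma~\ref{L:excsub}: the paper's intended non-circular route for the second assertion is \cite[Example 3.11]{Fs2} together with an independent, classical input — the cluster algebra structure on $k[U]$ (equivalently on the base affine space $G/U$, i.e.\ on $k[\mc{A}]$), going back to \cite{BFZ} and \cite{GLS}. You would need to replace your second paragraph by an appeal to that classical result rather than to a (nonexistent, or degenerate) double-flag version of \cite{Fs1}.
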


\noindent Readers can find explicit definition of the seed $(\Delta_l^{\flat},\b{s}_l^\flat)$ in \cite[Example 3.10]{Fs2}.
All we need to know here is that $(\Delta_l^{\flat},\b{s}_l^\flat)$ is obtained from $(\Delta_l,\b{s}_l)$ via the {\em projection} defined there. 
That in particular implies that $\uca(\Delta_l,\b{s}_l)$ is a balanced extension of $\uca(\Delta_l^\flat,\b{s}_l^\flat)$ via $\b{s}_l(\bs{v}_c)$.
So by Lemma \ref{L:frext=}, we have that $\uca(\Delta_l,\b{s}_l)_{D_c}=\uca(\Delta_l^\flat,\b{s}_l^\flat)[\b{s}_l(\bs{v}_c)^{\pm 1}]$.

\begin{theorem} \label{T:triple} The semi-invariant ring $\SI_\bl(T_l)$ is equal to the upper cluster algebra $\uca(\Delta_l,\b{s}_l)$.
Moreover, $(\Delta_l,W_l)$ is a polyhedral cluster model.	
\end{theorem}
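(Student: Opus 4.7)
The plan is to treat the two assertions separately. For the ring equality, I would start from the inclusion $\uca(\Delta_l,\b{s}_l)\subseteq \SI_\bl(T_l)$ provided by Lemma~\ref{L:subTl} and establish the reverse inclusion by localizing at each of the three boundary products $D_c=\prod_{v\in\bs{v}_c}\b{s}_l(v)$ for $c=1,2,3$. Lemma~\ref{L:localTl} gives $\SI_\bl(T_l)_{D_c}=k[\mc{A}][\b{s}_l(\bs{v}_c)^{\pm1}]$, while combining Lemma~\ref{L:frext=} with the identification $\uca(\Delta_l^{\flat},\b{s}_l^{\flat})=k[\mc{A}]$ (the second part of Lemma~\ref{L:localTl}) yields $\uca(\Delta_l,\b{s}_l)_{D_c}=k[\mc{A}][\b{s}_l(\bs{v}_c)^{\pm1}]$ as well. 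Therefore $\uca(\Delta_l,\b{s}_l)_{D_c}=\SI_\bl(T_l)_{D_c}$ inside the common field of fractions.

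Next I would complete the equality by a codimension-two style argument. Any $f\in\SI_\bl(T_l)$ now lies in $\uca(\Delta_l,\b{s}_l)_{D_c}$ for each $c$, so it suffices to prove $\uca(\Delta_l,\b{s}_l)_{D_1}\cap\uca(\Delta_l,\b{s}_l)_{D_2}=\uca(\Delta_l,\b{s}_l)$. I would invoke Theorem~\ref{T:bounds}, which (using that $B(\Delta_l)$ has full rank) writes $\uca(\Delta_l,\b{s}_l)$ as a finite intersection $\bigcap_{u\in(\Delta_l)_\mu\cup\{\circ\}}\mc{L}_{\Delta_l}(\b{s}_{l,u})$. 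Since mutation leaves frozen variables untouched, in each of the Laurent polynomial rings $\mc{L}_{\Delta_l}(\b{s}_{l,u})$ the product $D_c$ is still a monomial in distinct frozen indeterminates, so $D_1$ and $D_2$ are coprime there. Because each such $\mc{L}$ is a UFD, coprimeness gives $\mc{L}_{D_1}\cap\mc{L}_{D_2}=\mc{L}$; and since localization commutes with finite intersection of subrings of a fixed fraction field, the desired intersection equals $\uca(\Delta_l,\b{s}_l)$.

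For the cluster-model assertion I would simply identify $T_l=S_l^3$ with the disk $\mc{D}_3$ equipped with its unique triangulation $\mb{T}$ (a single oriented triangle); under this identification $\Diamond_l(\mb{T})=\Delta_l$ and $W_l(\mb{T})=W_l$, so Proposition~\ref{P:Dm} applies directly and delivers that $(\Delta_l,W_l)$ is a polyhedral cluster model.

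The main obstacle I anticipate is the coprimeness of $D_1$ and $D_2$ in each intermediate Laurent polynomial ring $\mc{L}_{\Delta_l}(\b{s}_{l,u})$. It is transparent in the initial seed because the $D_c$'s are built from disjoint sets of frozen variables, but one must make explicit that a single mutation at a mutable vertex leaves every frozen variable unchanged, so those products remain monomials in distinct frozen indeterminates. Once this is recorded, the UFD intersection step is routine, and together with the localization identities above it closes the gap.
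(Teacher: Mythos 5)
Your proof is correct, but the decisive ``codimension-two'' step is handled by a genuinely different argument than the paper's. The paper passes to $X=\Spec(\SI_\bl(T_l))$ and $Y=\Spec(\uca(\Delta_l,\b{s}_l))$, observes the inclusion induces a regular birational map $X\to Y$, and then rules out an exceptional divisor via Lemma~\ref{L:excsub} (Shafarevich): since $X$ is factorial and the Schofield invariants with extremal weight are irreducible, $Z(D_1)\cap Z(D_2)$ has codimension~$2$ in $X$, so the map is an isomorphism. You instead stay inside the Laurent algebra framework: Theorem~\ref{T:bounds} writes $\uca(\Delta_l,\b{s}_l)$ as the finite intersection $\bigcap_{u}\mc{L}_{\Delta_l}(\b{s}_{l,u})$; because mutation fixes frozen variables, $D_1$ and $D_2$ remain products of disjoint sets of prime polynomial indeterminates in each $\mc{L}_u$, the UFD identity $(\mc{L}_u)_{D_1}\cap(\mc{L}_u)_{D_2}=\mc{L}_u$ holds, and the commutation of localization with finite intersection of subrings of a fixed fraction field gives $\uca_{D_1}\cap\uca_{D_2}=\uca$. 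Both routes are sound; your argument is arguably more self-contained (no factoriality of $X$, no geometric lemma), at the cost of needing full rank of $B(\Delta_l)$ for Theorem~\ref{T:bounds}. One thing worth noting is that the paper's geometric method is what later generalizes to Theorem~\ref{T:mtuple}: there the two ``bad divisors'' $D(\mb{T})$ and $D(\mb{T}^\dagger)$ are built from cluster variables sitting at \emph{mutable} diagonal vertices, which do change under mutation, so they are not monomials in frozen indeterminates in every Laurent chart and your UFD shortcut would not carry over. For the triple-flag case as stated, though, your argument closes the proof, and your identification of $(\Delta_l,W_l)$ with $(\Diamond_l(\mb{T}),W_l(\mb{T}))$ for the one-triangle disk, reducing the second assertion to Proposition~\ref{P:Dm}, matches the paper exactly.
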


\begin{proof} By Lemma \ref{L:localTl} and the above discussion, we have that $\SI_\bl(T_l)_{D_c} = \uca(\Delta_l,\b{s}_l)_{D_c}$ for $c=1,2,3$.
	Let $X:=\Spec\left(\SI_\bl(T_l)\right)$ and $Y:=\Spec \left(\uca(\Delta_l,\b{s}_l) \right)$.
	We have a regular birational map $X \to Y$ induced by the inclusion of Lemma \ref{L:subTl}.
	To show it is in fact an isomorphism, it suffices to show according to the following lemma that
	$Z(D_1) \cap Z(D_2)$ has codimension 2 in $X$.
	We saw that each function in $\b{s}_l$ is irreducible.
	Since $X$ is factorial (\cite{PV}), the intersection has codimension 2.	
The last statement was proved in Proposition \ref{P:Dm}.
\end{proof}

\begin{lemma}[{\cite[Corollary 1 of II.4.4]{Sh}}] \label{L:excsub} Let $f:X\to Y$ be a regular birational map, which is not an isomorphism. 
	If $Y$ is locally factorial, then $f$ has an exceptional subvariety (i.e., a codimension 1 subvariety $Z\subset X$ such that $\op{codim} f(Z) \geq 2$).
\end{lemma}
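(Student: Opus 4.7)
The plan is to produce the exceptional subvariety explicitly, exploiting that local rings of $Y$ are unique factorization domains. Since $f$ is birational but not an isomorphism, one expects a point $x_0 \in X$ with image $y_0 = f(x_0)$ at which $f^\ast \colon \mc{O}_{Y,y_0} \hookrightarrow \mc{O}_{X,x_0}$ is strict (birationality makes this injection an equality of fraction fields $k(Y) = k(X)$). Assuming such an $x_0$ is in hand, I would pick a function $\phi \in \mc{O}_{X,x_0} \setminus f^\ast \mc{O}_{Y,y_0}$ and view it inside the common function field. Local factoriality makes $\mc{O}_{Y,y_0}$ a UFD, so $\phi = u/v$ for some coprime $u, v \in \mc{O}_{Y,y_0}$. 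Because $\phi$ is not regular at $y_0$, the denominator satisfies $v(y_0) = 0$.

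Next I would consider the principal divisor $Z := \{f^\ast v = 0\} \subset X$. Since $v \neq 0$ in $\mc{O}_{Y,y_0}$ and $f$ is dominant, $f^\ast v$ is a nonzero regular function on a neighborhood of $x_0$, so $Z$ has pure codimension $1$ there. On $Z$ the identity $\phi \cdot v = u$ in $\mc{O}_{X,x_0}$ forces $f^\ast u$ to vanish as well, hence $f(Z)$ is contained in $\{u = v = 0\} \subset Y$. Because $u$ and $v$ are coprime in a UFD, this common zero locus has codimension at least $2$ near $y_0$. Any irreducible component of $Z$ through $x_0$ is then the desired exceptional subvariety, completing the argument.

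The main obstacle I expect is justifying the very first step: rigorously producing $x_0$ at which the local-ring map fails to be surjective. If $f^\ast \colon \mc{O}_{Y,f(x)} \to \mc{O}_{X,x}$ were surjective for \emph{every} $x \in X$, then $f$ would be an unramified birational morphism inducing isomorphisms on all local rings, so an open immersion; combined with $f$ being a morphism defined on all of $X$ and birational to $Y$, this forces $f$ to be an isomorphism, contradicting the hypothesis. One can also invoke Zariski's Main Theorem at this point to make the reduction rigorous. Once this preliminary point is secured, the UFD manipulation above proceeds without difficulty.
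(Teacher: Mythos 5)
Your core computation---passing to the local ring $\mc{O}_{Y,y_0}$, which is a UFD by local factoriality, writing $\phi=u/v$ in lowest terms with $v(y_0)=0$, taking $Z:=\{f^*v=0\}$ near $x_0$, and using coprimality of $u$ and $v$ to conclude $\op{codim} f(Z)\ge 2$---is correct and is essentially the argument in Shafarevich. In particular the identity $\phi\cdot f^*v=f^*u$ forcing $f^*u$ to vanish along $Z$, and the fact that a height-one prime of a UFD containing both $u$ and $v$ would produce a common irreducible factor, are exactly the right points, and restricting to an irreducible component of $Z$ through $x_0$ finishes the construction.

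The gap is where you flagged it, and it is more than a technicality. The assertion that an open immersion $f\colon X\to Y$ which is defined on all of $X$ and birational onto $Y$ must be an isomorphism is false: the inclusion $\mathbb{A}^1\setminus\{0\}\hookrightarrow\mathbb{A}^1$ is a regular birational morphism to a smooth (hence locally factorial) variety, is an open immersion, is not an isomorphism, and has no exceptional subvariety. This is in fact a counterexample to the lemma exactly as it is stated: some extra hypothesis, such as properness of $f$ (Shafarevich's Corollary is stated in a projective setting) or surjectivity of $f$, is required, and it is required precisely to rescue your preliminary reduction. With it, the reduction is fine: if every local homomorphism $\mc{O}_{Y,f(x)}\to\mc{O}_{X,x}$ were an isomorphism, then $f$ would be \'etale and injective (distinct points of a separated integral scheme have distinct local rings inside the common function field), hence an open immersion; a proper, or just surjective, open immersion onto the irreducible $Y$ is then an isomorphism, giving the contradiction. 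Once $x_0$ is produced this way, the rest of your argument goes through unchanged.
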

\noindent We remark that the condition for $Y$ required in the original text is being nonsingular, 
but we see from the proof that it can be weakened as being locally factorial.

Some readers may complain that Lemma \ref{L:localTl} is proved in \cite{Fs2} with Theorem \ref{T:triple}.
However, Lemma \ref{L:localTl} can also be proved in a similar way using \cite[Example 3.11]{Fs2} 
if we assume a classical result on the cluster algebra structure of $U$.

\subsection{Generalization to Complete $m$-tuple flags} \label{ss:mtuple}

Let $\mb{T}$ be any consistent triangulation of $\mc{D}_m$.
Given a triangle ${\sf t} = [a,b,c]\in \mb{T}$, there is a natural $G$-equivariant projection
$$\mc{A}^m \to \mc{A}^3,\ (A_1,A_2,\dots,A_m)\mapsto (A_a,A_b,A_c).$$
%$$\Rep_{\bl}(S_l^m) \to \Rep_{\bl}(S_l^\t), ()\mapsto ().$$
By the universal property of the categorical quotient, we get a map
$$\pi_{\sf t}: \mc{A}^m/G \to \mc{A}^3/G.$$
We denote $X_m:=\mc{A}^m/G$. For each such a triangulation $\mb{T}$, we thus get a map 
$$\pi_{\mb{T}}=\prod_{{\sf t}\in \mb{T}} \pi_{\sf t}: X_m \to \prod_{{\sf t}\in \mb{T}} X_3^\t.$$

Let $\Delta_l^\t$ be the ice hive subquiver of $\Diamond_l(\mb{T})$ corresponding to $\t$. 
By pulling back the seed $(\Delta_l,\b{s}_l)$ of  $k[X_3]=\SI_\bl(T_l)$ via $\pi_\t$,
we get a set of semi-invariant functions $\b{s}_l^\t$ in $\SI_\bl(S_l^m)$ indexed by vertices of $\Delta_l^\t$. 
The function $\b{s}_l^\t(i,j,k)$ is nothing but $s_{i,j,k}^{a,b,c}$ defined in Example \ref{ex:Slm}.
The assignment clearly does not depend on the representation of $\t$.

We call a common edge in $\mb{T}$ a {\em diagonal}. 
It is clear from the construction that all elements in $\b{s}_l^\t, (\t\in \mb{T})$ agree on diagonals.
So we obtain a set $\b{s}_l(\mb{T})\subset \SI_\bl(S_l^m)$ indexed by the vertices of $\Diamond_l(\mb{T})$.
Let $\bs{d}:=\bs{d}(\mb{T})$ be the set of all vertices on the diagonals of $\mb{T}$,
and $\bs{d}|_\t$ be the subset of $\bs{d}$ where vertices lie on some edge of $\t$.
We set $D(\mb{T}):= \prod_{v\in \bs{d}} \b{s}_l(\mb{T})(v)$ and $D_\t:= \prod_{v\in \bs{d}|_\t} \b{s}_l^\t(v)$.
Let $X_{m}^\circ$ (resp. $(X_{3}^\t)^\circ$) be the complement of the zero set $Z(D(\mb{T}))$ in $X_{m}$ (resp. the complement of $Z(D_\t)$ in $X_{3}^\t$).
\begin{lemma} \label{L:embedding} The restriction of the map $\pi_{\mb{T}}: X_m\to \prod_{\t\in \mb{T}} X_{3}^\t$ to the open set $X_{m}^\circ$ is a closed embedding to $\prod_{\t\in \mb{T}} (X_{3}^\t)^\circ$.
\end{lemma}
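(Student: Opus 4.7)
The plan is to use the tree structure of the dual graph of any triangulation of $\mc{D}_m$, together with Corollary \ref{C:doubleflag}, to give an inductive fiber-product description of $X_m^\circ$ from which the closed embedding falls out.

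Let $Y \subseteq \prod_\t (X_3^\t)^\circ$ be the closed subscheme cut out by the \emph{matching equations}: for each diagonal $\d = \t \cap \t'$ of $\mb{T}$ and each vertex $v$ of $\Delta_l^\t$ lying on $\d$, impose $\b{s}_l^\t(v) = \b{s}_l^{\t'}(v)$. First I check that $\pi_\mb{T}|_{X_m^\circ}$ lands in $\prod_\t (X_3^\t)^\circ$: since each vertex on a diagonal of $\mb{T}$ lies in exactly two triangles, $\prod_\t D_\t = D(\mb{T})^2$, so $D(\mb{T}) \neq 0$ forces every $D_\t \neq 0$. The image lies in $Y$ because the semi-invariant $\b{s}_l^\t(v) = s_{i,j,0}^{a,b,c}$ attached to a vertex on an edge depends only on the two marked points at the endpoints of that edge, so the two triangles sharing a diagonal evaluate to the same function on $\mc{A}^m$.

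The main step is the bijection $X_m^\circ \xrightarrow{\sim} Y$, which I prove by induction on $m \geq 3$, with $m=3$ vacuous. For $m > 3$, pick an ear triangle $\t_0 = [a,b,d]$ of $\mb{T}$, so $\br{ad}$ and $\br{bd}$ are boundary edges of $\mc{D}_m$ and $\br{ab}$ is the unique diagonal edge of $\t_0$; let $\mb{T}'$ be the induced triangulation of $\mc{D}_{m-1}$ after removing $\t_0$, and set $\tilde X_{m-1} := X_{m-1}^\circ \cap \{D_\br{ab} \neq 0\}$. Write $X_2^\circ := (\mc{A}^2 \setminus \mc{D})/G$, which by Corollary \ref{C:doubleflag} is isomorphic to the torus $H$. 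I claim the fiber-product identity
$$X_m^\circ \;=\; \tilde X_{m-1} \times_{X_2^\circ} (X_3^{\t_0})^\circ,$$
where both structure maps project onto $(A_a,A_b)$ modulo $G$. To produce the inverse, note that Corollary \ref{C:doubleflag} says $G$ acts freely on $(\mc{A}^2)^\circ$, so given a point of the fiber product there is a unique $g \in G$ bringing chosen lifts of $(A_a,A_b)$ from the two factors into agreement; this yields an $m$-tuple well-defined modulo $G$. Combining with the induction hypothesis $\tilde X_{m-1} \hookrightarrow Y' \subseteq \prod_{\t \in \mb{T}'} (X_3^\t)^\circ$ gives
$$X_m^\circ \;=\; \tilde X_{m-1} \times_{X_2^\circ} (X_3^{\t_0})^\circ \;\hookrightarrow\; Y' \times_{X_2^\circ} (X_3^{\t_0})^\circ \;=\; Y,$$
the last equality because $Y$ is exactly $Y'$ supplemented by the single new matching condition on $\br{ab}$.

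The main technical point is upgrading this to a scheme-theoretic closed immersion: I must show that matching the semi-invariants $\b{s}_l^{\t_0}(v)$ for $v \in \br{ab}$ is scheme-theoretically equivalent to matching in $X_2^\circ$. This comes down to the fact, implicit in Corollary \ref{C:doubleflag}, that after inverting $D_\br{ab}$ the coordinate ring $k[X_2^\circ] \cong k[H]$ is generated by the semi-invariants $s_{i,l-i}^{a,b}$ for $1 \leq i \leq l-1$; equivalently, the $G$-torsor $(\mc{A}^2)^\circ \to X_2^\circ$ is trivialised by these functions. Granting this, $\tilde X_{m-1}$ and $(X_3^{\t_0})^\circ$ are both affine and flat over the torus $X_2^\circ$ (inherited from the torsor structure), so the scheme-theoretic fiber product matches the set-theoretic one, closing the induction and producing the desired closed embedding.
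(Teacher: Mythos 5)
Your proof is essentially the paper's argument made systematic: both rely on Corollary \ref{C:doubleflag} (free $G$-action on generic double flags, quotient $\cong H$) to realize $X_m^\circ$ as a fiber product of the triple-flag quotients over $H$, and then observe that a fiber product is closed in the ambient product. The paper only writes out the two-triangle case and says the general case is similar; you carry out the ear-triangle induction explicitly, which is a welcome addition rather than a genuinely different route.

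One wobble in the final paragraph: flatness of $\tilde X_{m-1}$ and $(X_3^{\t_0})^\circ$ over $X_2^\circ$ is not what makes the scheme-theoretic statement work. What you actually need, and what you already have in hand, is (i) that $k[X_2^\circ]$ is generated by $s_{i,l-i}^{a,b}$ and their inverses, so the ideal of the diagonal in $X_2^\circ\times X_2^\circ$ is generated by the matching relations, identifying $Y'\times_{X_2^\circ}(X_3^{\t_0})^\circ$ with $Y$; and (ii) that $(\mc{A}^2)^\circ\to X_2^\circ$ is a (trivial) $G$-torsor, so that taking $G$-quotients commutes with the fiber product, giving $X_m^\circ\cong \tilde X_{m-1}\times_{X_2^\circ}(X_3^{\t_0})^\circ$ as schemes. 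Replace the appeal to flatness by these two points and the argument closes cleanly; closed immersions are stable under base change, which handles the inductive step.
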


\begin{proof} We only consider the special case when $\mb{T}$ consisting of two triangles because the general case is similar.
	By definition $\pi_{\mb{T}}$ maps $X_{m}^\circ$ to $\prod_{\t\in \mb{T}} (X_{3}^\t)^\circ$.
	We first show that $\pi_{\mb{T}}$ is injective when restricted to $X_{m}^\circ$.
	Let $\t_1=[a,b,c]$ and $\t_2=[b,d,c]$ be the two triangles in $\mb{T}$ meeting at a diagonal, say $\br{bc}$.
	We will explicitly construct a section from the image $\pi_{\mb{T}}(X_{m}^\circ)$.
	The image $\pi_{\mb{T}}(X_{m}^\circ)$ in $X_{3}^{\t_1} \times X_{3}^{\t_2}$ can be represented by
	$(A_a,A_b,A_c),(A_b,A_d,A_c)\in \mc{A}^3$ such that $(A_b,A_c)$ is generic.
	By Corollary \ref{C:doubleflag}, the $G$-stabilizer of $(A_b,A_c)$ is trivial.
	So the map $\left((A_a,A_b,A_c),(A_b,A_d,A_c)\right)\mapsto (A_a,A_b,A_d,A_c)$ descends to a well-defined morphism 
	$\pi_{\mb{T}}(X_m^\circ) \to X_m$, which is clearly a section of $\pi_{\mb{T}}$.
	
	To show that the image of $\pi_{\mb{T}}$ is closed, we recall from Corollary \ref{C:doubleflag} that the quotient of generic pairs in $\mc{A}\times \mc{A}$ by $G$ is geometric (and is isomorphic to $H$).
	Consider the natural projection $\pi_{bc}^1: (X_{3}^{\t_1})^\circ \to H$ induced by $(A_a,A_b,A_c)\mapsto (A_b,A_c)$.
	Similarly we have the natural projection $\pi_{bc}^2: (X_{3}^{\t_2})^\circ \to H$.
	The image $\pi_{\mb{T}}(X_{m}^\circ)$ is the inverse image $(\pi_{bc}^1,\pi_{bc}^2)^{-1}(\Delta_H)$ where $\Delta_H$ is the diagonal of $H\times H$. Hence, $\pi_{\mb{T}}(X_{m}^\circ)$ is closed.	
	
%	we recall from Lemma \ref{L:localTl} and Corollary \ref{C:doubleflag} that $(X_{3}^{\t_1})^\circ$ is isomorphic to $\mc{A}_a\times H$,
%	where $H$ is the quotient of generic pairs in $\mc{A}_b\times \mc{A}_c$ by $G$.
%So a point in $(X_{3}^{\t_1})^\circ \times (X_{3}^{\t_2})^\circ$ can be represented by $((A_a,h_1),(A_d,h_2))$.
%The image $\pi_{\mb{T}}(X_{m}^\circ)$ is the subset of $(X_{3}^{\t_1})^\circ \times (X_{3}^{\t_2})^\circ$ defined by the closed condition that $h_1=h_2$.	
%show the embedding is closed, we first observe that the corresponding map at the level of representation spaces
%$\wtd{\pi}_{\mb{T}}:\Rep_{\bl}(S_l^m) \hookrightarrow \prod_{\t\in \mb{T}} \Rep_{\bl}(S_l^\t)$ is closed. 
%Let $\Rep_{\bl}^\circ(S_l^m)$ (resp. $\Rep_{\bl}^\circ(S_l^\t)$) be the complement of the zero set $Z(D(\mb{T}))$ in $\Rep_{\bl}(S_l^m)$ (resp. the complement of $Z(D_\t)$ in $\Rep_{\bl}(S_l^\t)$).
%Since the inverse image of $\prod_{\t\in \mb{T}} \Rep_{\bl}(S_l^\t)\setminus \prod_{\t\in \mb{T}} \Rep_{\bl}^\circ(S_l^\t)$ is exactly $Z(D(\mb{T}))$,
%we see that $\wtd{\pi}_{\mb{T}}(\Rep_{\bl}^\circ(S_l^m))$ is closed in $\prod_{\t\in \mb{T}} \Rep_{\bl}^\circ(S_l^\t)$.
%Note that $\wtd{\pi}_{\mb{T}}(\Rep_{\bl}^\circ(S_l^m))$ is $\GL_\bl$-invariant with (geometric) quotient equal to ${\pi}_{\mb{T}}(X_m^\circ)$.
\end{proof}

\begin{lemma} \label{L:indep} The set $\b{s}_l(\mb{T})$ is algebraically independent over $k$.
\end{lemma}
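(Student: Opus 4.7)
The plan is to derive algebraic independence from the closed embedding of Lemma \ref{L:embedding} together with the fact that each $\b{s}_l^\t$ comes from an extended cluster. I would proceed in two stages: first show that $\b{s}_l(\mb{T})$ generates the function field $k(X_m)$, and then verify that $|\b{s}_l(\mb{T})| = \dim X_m$.

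For the first stage, Lemma \ref{L:embedding} gives a surjection $\pi_{\mb{T}}^* : \bigotimes_{\t} k[(X_3^\t)^\circ] \twoheadrightarrow k[X_m^\circ]$, so $k[X_m^\circ]$ is generated as a $k$-algebra by $\bigcup_\t \pi_\t^*(k[(X_3^\t)^\circ])$. Each individual $\pi_\t : X_m \to X_3^\t$ is dominant, hence $\pi_\t^*$ extends to a field embedding $k(X_3^\t) \hookrightarrow k(X_m)$. By Theorem \ref{T:triple}, $k[X_3^\t]$ is the upper cluster algebra $\uca(\Delta_l^\t, \b{s}_l)$, whose extended cluster is by definition a transcendence basis of the ambient field $k(X_3^\t)$. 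Pulling back via $\pi_\t$ gives $\pi_\t^* k(X_3^\t) = k(\b{s}_l^\t) \subseteq k(\b{s}_l(\mb{T}))$, so $\pi_\t^*(k[X_3^\t]) \subseteq k(\b{s}_l(\mb{T}))$ for every $\t$. Combining across triangles yields $k[X_m^\circ] \subseteq k(\b{s}_l(\mb{T}))$, and passing to fraction fields gives $k(X_m) = k(\b{s}_l(\mb{T}))$.

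For the second stage, Lemma \ref{L:localTl} gives $\dim \mc{A} = |\Delta_l^\flat| = (l-1)(l+2)/2$, and Corollary \ref{C:doubleflag} implies that $G$ acts generically freely on $\mc{A}^m$, so
\[
\dim X_m \;=\; m \dim \mc{A} - \dim G \;=\; \tfrac{1}{2}(l-1)\bigl[m(l+2) - 2(l+1)\bigr].
\]
On the other hand, $\Diamond_l(\mb{T})$ is glued from $m-2$ copies of $\Delta_l$ by identifying $l-1$ vertices along each of the $m-3$ diagonals, so $|\b{s}_l(\mb{T})| = (m-2)|\Delta_l| - (m-3)(l-1)$; a direct simplification confirms the two agree.

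Putting the two stages together, $\b{s}_l(\mb{T})$ generates $k(X_m)$ with cardinality equal to $\op{trdeg}_k k(X_m)$, so it must be a transcendence basis and hence algebraically independent over $k$. The delicate point is the field-generation step, where I rely simultaneously on each $\pi_\t$ being individually dominant (so $\pi_\t^*$ extends to fraction fields) and on the combined $\pi_{\mb{T}}$ being a closed embedding after restricting to $X_m^\circ$; the cardinality check is then a routine calculation.
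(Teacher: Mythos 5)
Your proof is correct and follows essentially the same two-stage argument as the paper: derive field generation $k(X_m)=k(\b{s}_l(\mb{T}))$ from the closed embedding of Lemma \ref{L:embedding}, then match the cardinality of $\b{s}_l(\mb{T})$ against the Krull dimension of $k[X_m]$. You spell out a few steps the paper leaves implicit (the dominance of each $\pi_\t$, the use of Theorem \ref{T:triple} to identify $k(X_3^\t)=k(\b{s}_l^\t)$, and the explicit vertex count), but the skeleton and the key inputs are identical.
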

\begin{proof} By Lemma \ref{L:embedding} the functional field of $X_m$ is isomorphic to that of the image of $\pi_{\mb{T}}$.
	So $k(X_m)=\pi^*k(\prod_{\t\in \mb{T}} X_{3}^\t)$ is generated by $\bigcup_{\t\in\mb{T}} \b{s}_l^{\t} = \b{s}_l(\mb{T})$ as a field.
	The Krull dimension of the integral domain $k[X_m]$ is equal to 
	$$m\dim\mc{A}-\dim G=\frac{1}{2}((l-1)(l+1)(m-2)+(l-1)m).$$
	By a simple counting, this is exactly the cardinality of $\b{s}_l(\mb{T})$.
	Hence $\b{s}_l(\mb{T})$ must be algebraically independent over $k$.
\end{proof}

%\noindent We view $\b{s}_l^\t$ as a set of functions in $\prod_{\t\in \mb{T}} X_{3}^\t$ via pulling back from the natural projection to $X_3^\t$.
%When restricted to the image of $\pi_{\mb{T}}$, two functions in $\bigcup_{\t\in\mb{T}} \b{s}_l^\t$ get identified if they correspond to the same vertex (on a diagonal). By the above lemma, $\pi_{\mb{T}}$ maps $X_m$ birationally onto its image,
%so $\b{s}_l(\mb{T})$ is an algebraically independent set so that $(\Diamond_l(\mb{T}),\b{s}_l(\mb{T}))$ is a seed.

\begin{lemma}  \label{L:contain} The upper cluster algebra $\uca(\Diamond_l(\mb{T}),\b{s}_l(\mb{T}))$ is a subalgebra of $\SI_\bl(S_l^m)$.
\end{lemma}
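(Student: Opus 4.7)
The plan is to invoke Lemma \ref{L:RCA} with $R = \SI_\bl(S_l^m)$. Standard invariant theory shows $R$ is a finitely generated $k$-algebra; by \cite{PV}, $R$ is factorial, hence a normal domain. It remains to verify that $(\Diamond_l(\mb{T}), \b{s}_l(\mb{T}))$ is a CR1 seed in $R$.

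First I verify condition (1) of Definition \ref{D:CR1}, i.e.\ that $\b{s}_l(\mb{T}) \subset R$ and each $x_u' \in R$. The first inclusion holds by construction (the $\b{s}_l(\mb{T})$ are pulled back from single-triangle Schofield semi-invariants via $\pi_\t^*$). For the mutation variables $x_u'$, I split on whether $u$ lies in the interior of some triangle $\t \in \mb{T}$ or on a diagonal. If $u$ is interior to $\t$, then every neighbor of $u$ in $\Diamond_l(\mb{T})$ lies in $\Delta_l^\t$, since the extra arrows introduced by gluing live on diagonals. Thus the exchange at $u$ in $\Diamond_l(\mb{T})$ coincides with the exchange in the single hive $\Delta_l^\t$, so by Theorem \ref{T:triple}, $x_u' \in \uca(\Delta_l^\t, \b{s}_l^\t) = \SI_\bl(T_l^\t)$; pulling back via $\pi_\t^*$ places $x_u'$ in $R$.

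The main obstacle is the diagonal case: when $u$ lies on the common edge $\d$ of triangles $\t_1, \t_2$, the exchange at $u$ mixes arrows from both hives. To handle this, I exploit the Fock--Goncharov flip sequence $\bs{\mu}^\d$ together with a localization argument. Localize at $D(\mb{T})$: by Lemma \ref{L:embedding}, $\Spec R_{D(\mb{T})}$ is a closed subvariety of $\prod_{\t} \Spec \SI_\bl(T_l^\t)_{D_\t}$, so $R_{D(\mb{T})}$ is a quotient of the tensor product of triangle invariant rings, in each factor of which $x_u'$ is visibly present (by Theorem \ref{T:triple} applied to either $\t_1$ or $\t_2$ along $\d$, after reindexing). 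Hence $x_u' \in R_{D(\mb{T})}$. To lift from this localization back to $R$, I use normality of $R$: since $R$ is a UFD and each factor $\b{s}_l(\mb{T})(v)$ of $D(\mb{T})$ is irreducible (see next paragraph), the element $\rho_1 + \rho_2 \in R$ is divisible by $x_u$ iff the divisibility holds at the generic point of $Z(x_u)$, which is already contained in the complement of $Z(D(\mb{T})/x_u)$. Therefore $x_u' = (\rho_1+\rho_2)/x_u \in R$.

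Finally I verify condition (2) of Definition \ref{D:CR1}, pairwise coprimality in codimension one. Each element $\b{s}_l(\mb{T})(v) = s_{i,j,k}^{a,b,c}$ is a Schofield semi-invariant whose weight $\sigma_{i,j,k}^{a,b,c} = \e_l - \e_i^a - \e_j^b - \e_k^c$ is indivisible and extremal in $\Sigma_\bl(S_l^m)$ (extremality persists from the triple flag case in Example \ref{ex:Slm} because the remaining flags contribute zero to the relevant weights). By Lemma \ref{L:irreducible}, each $\b{s}_l(\mb{T})(v)$ is irreducible in the UFD $R$; distinct irreducibles in a UFD generate a height-two prime at worst, giving codimension $\geq 2$ as required. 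The pair $(x_u, x_u')$ is coprime in codimension one by the same argument, using that any mutation variable $x_u'$ is also a Schofield semi-invariant with indivisible extremal weight (distinct from that of $x_u$ since their weight vectors differ), together with the fact that the exchange relation $x_u x_u' = \rho_1 + \rho_2$ forces both $\rho_1$ and $\rho_2$ to vanish on $Z(x_u) \cap Z(x_u')$, reducing the common vanishing locus to an intersection of four Schofield divisors, which has codimension $\geq 2$. With both CR1 conditions verified, Lemma \ref{L:RCA} yields $R \supseteq \uca(\Diamond_l(\mb{T}), \b{s}_l(\mb{T}))$.
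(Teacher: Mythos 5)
Your overall outline matches the paper's: invoke Lemma \ref{L:RCA}, verify that the seed is CR1 in $R=\SI_\bl(S_l^m)$ using factoriality, and split the verification of $x_u'\in R$ according to whether $u$ lies in the interior of a triangle or on a diagonal. The handling of the interior case, and the codimension-two argument in condition (2) via indivisible extremal weights and Lemma \ref{L:irreducible}, are essentially what the paper does.

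However, your treatment of the diagonal case has a genuine gap. Localizing at $D(\mb{T})$ and observing $x_u'=(\rho_1+\rho_2)/x_u\in R_{D(\mb{T})}$ gives no information, because $x_u$ is itself a factor of $D(\mb{T})$: it becomes a unit in $R_{D(\mb{T})}$, so the divisibility $x_u\mid(\rho_1+\rho_2)$ holds tautologically there. Consequently, the claim that ``the divisibility holds at the generic point of $Z(x_u)$'' is exactly what remains unproved: the generic point of $Z(x_u)$ lies inside $Z(D(\mb{T}))$ (since $x_u\mid D(\mb{T})$), so it is \emph{not} visible in $\Spec R_{D(\mb{T})}$, and your localization says nothing about it. Moreover, the intermediate claim that $x_u'$ ``is visibly present'' in some single factor $\SI_\bl(T_l^{\t_i})_{D_{\t_i}}$ cannot be right after any reindexing: a diagonal vertex $u$ is frozen in each individual hive $\Delta_l^{\t_i}$, so there is no exchange at $u$ within a single triangle, and the actual exchange $\rho_1+\rho_2$ genuinely mixes Schofield invariants from both triangles. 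The paper closes this gap by an explicit computation: it identifies $(s_{0,j,k}^{a,b,c})'$ as the Schofield semi-invariant of the presentation $P_l\to P_1^a\oplus P_{j-1}^b\oplus P_1^d\oplus P_{k-1}^c$ and verifies the exchange relation directly on the dense locus where the $b,c$-flags are $F^-$ and $F^+$. Some such concrete identification (or an alternative argument actually establishing regularity of $x_u'$ along the divisor $Z(x_u)$) is needed; the abstract localization at $D(\mb{T})$ does not supply it.
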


\begin{proof} We will apply Lemma \ref{L:RCA}. For the first condition of CR1, we need to check that $\b{s}_l(\mb{T})(u)'$ lies in $\SI_\bl(S_l^m)$ for each $u\in\Diamond_l(\mb{T})_\mu$.
For $u$ not on a diagonal, this is the content of \cite[Lemma 8.10]{Fs1}.
	
For $u$ on a diagonal, we observe that all functions involved are supported on a quadruple flag, say labeled by $a,b,c,d$ with common edge $\br{bc}$ in $\mb{T}$.
We claim that $(s_{0,j,k}^{a,b,c})'$ is the Schofield's semi-invariant associated to the presentation 
$$P_l \xrightarrow{(p_1^a, p_{j-1}^b, p_1^d, p_{k-1}^c)} P_1^a\oplus P_{j-1}^b\oplus P_1^d\oplus P_{k-1}^c.$$
We need to verify the exchange relation 
$$s_{0,j,k}^{a,b,c} (s_{0,j,k}^{a,b,c})'= s_{1,j-1,k}^{a,b,c}s_{j,1,k-1}^{b,d,c}+s_{1,j,k-1}^{a,b,c}s_{j-1,1,k}^{b,d,c}.$$
It suffices to verify on the dense set, where the $b,c$-th flags are represented by $F^-$ and $F^+$.
Let $M$ be such a general representation.
Suppose that $M(p_{1}^{a})=u$ and $M(p_{1}^{d})=v$ for some row vectors $u$ and $v$ of length $l$.
Then
\begin{align*}
s_{j,1,k-1}^{b,d,c}&= v(j+1), & s_{1,j-1,k}^{a,b,c}&= (-1)^{j-1}u(j), \\
s_{j-1,1,k}^{b,d,c}&= v(j), & s_{1,j,k-1}^{a,b,c}&= (-1)^{j} u(j+1), \\
s_{0,j,k}^{a,b,c}&=1, & (s_{0,j,k}^{a,b,c})' &= \det\sm{u \\ I_{j-1}\ 0 \\ v \\ 0\ I_{k-1}} = (-1)^{j-1} \big(u(j)v(j+1) - u(j+1)v(j)\big).
\end{align*}
We conclude that the exchange relation holds.

Finally we need to verify the coprime condition. 
Each $s_{i,j,k}^{a,b,c}$ or $(s_{i,j,k}^{a,b,c})'$ has an extremal weight, and thus irreducible by Lemma \ref{L:irreducible}.
Since $\SI_\bl(S_l^m)$ is factorial \cite[Theorem 3.17]{PV}, the coprime condition follows.
\end{proof}

%We also put a rigid potential $W(\mb{T})$ on $\Diamond(\mb{T})$.
%For each pair of vertices on a diagonal, there is exactly one oriented square passing them.
%Let $W_d$ be the sum of all such oriented squares along a diagonal $d$ in $\mb{T}$.
%For each $\t\in\mb{T}$, let $W_\t$ be the potential on $\Delta_l^\t$.
%We put
%$$W= \sum_{\t\in \mb{T}} W_\t + \sum_{d} W_d.$$

\begin{corollary} \label{C:local=} For any triangulation $\mb{T}$ of $\mc{D}_m$, we have that 
	$$\SI_\bl(S_l^m)_{D(\mb{T})} = \uca(\Diamond_l(\mb{T}),\b{s}_l(\mb{T}))_{D(\mb{T})}.$$
\end{corollary}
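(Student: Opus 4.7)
The strategy is to localize first and exploit the closed embedding from Lemma \ref{L:embedding} to match up the two sides. One inclusion, $\uca(\Diamond_l(\mb{T}),\b{s}_l(\mb{T}))_{D(\mb{T})} \subseteq \SI_\bl(S_l^m)_{D(\mb{T})}$, is immediate from Lemma \ref{L:contain}, so the work lies in the reverse inclusion.

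My plan is to use that after inverting the diagonal frozen variables the quiver $\Diamond_l(\mb{T})$ effectively decouples along the triangles of $\mb{T}$. Using Lemma \ref{L:reduce2frozen}, whose full-rank hypothesis is supplied by Proposition \ref{P:Dm} together with Lemmas \ref{L:gluerank} and \ref{L:fullrank}, I would reduce the problem to showing
\[
\uca(\Diamond_l(\mb{T})^{\bs{d}},\b{s}_l(\mb{T}))_{D(\mb{T})} \;=\; \SI_\bl(S_l^m)_{D(\mb{T})}.
\]
After freezing $\bs{d}$, the mutable vertices of $\Diamond_l(\mb{T})^{\bs{d}}$ partition cleanly into the interior mutable vertices of each $\Delta_l^\t$ for $\t \in \mb{T}$, so mutations supported in different triangles act on disjoint sets of cluster variables and commute.

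Combining this disjointness with the upper-bound characterization of Theorem \ref{T:bounds}, I would then identify
\[
\uca(\Diamond_l(\mb{T})^{\bs{d}},\b{s}_l(\mb{T}))_{D(\mb{T})} \;\cong\; \bigotimes_{\t \in \mb{T}} \uca(\Delta_l^\t,\b{s}_l^\t)_{D_\t},
\]
where the tensor product is formed over the shared diagonal frozen variables. Theorem \ref{T:triple} identifies each factor with $k[(X_3^\t)^\circ]$, so the right side is the coordinate ring of the fibered product of the spaces $(X_3^\t)^\circ$ along their natural projections to the torus $H$ of generic flag pairs modulo $G$ from Corollary \ref{C:doubleflag}. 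By Lemma \ref{L:embedding} this fibered product is exactly $X_m^\circ$, and hence the ring becomes $k[X_m^\circ] = \SI_\bl(S_l^m)_{D(\mb{T})}$, finishing the proof.

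The main obstacle is the fibered-tensor identification above: I must verify that the natural surjection $\bigotimes_\t \uca(\Delta_l^\t)_{D_\t} \twoheadrightarrow \uca(\Diamond_l(\mb{T})^{\bs{d}})_{D(\mb{T})}$ has kernel precisely the ideal that identifies shared diagonal variables, and that this agrees with the scheme-theoretic fiber product of Lemma \ref{L:embedding} under the torus identification of Corollary \ref{C:doubleflag}. A secondary subtlety is securing the Noetherian hypothesis of Lemma \ref{L:reduce2frozen} without circularity, which can be handled by first using Proposition \ref{P:Dm} together with Theorem \ref{T:GCC} to realize $\uca(\Diamond_l(\mb{T}),\b{s}_l(\mb{T}))$ as a finitely generated subalgebra of $\SI_\bl(S_l^m)$.
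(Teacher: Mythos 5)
Your route matches the paper's in its key ingredients: both proofs get the inclusion $\uca(\Diamond_l(\mb{T}),\b{s}_l(\mb{T}))_{D(\mb{T})} \subseteq \SI_\bl(S_l^m)_{D(\mb{T})}$ from Lemma \ref{L:contain}, invoke Lemma \ref{L:reduce2frozen} to pass to the frozen quiver $\Diamond_l(\mb{T})^{\bs{d}}$, and finish with Theorem \ref{T:triple} and Lemma \ref{L:embedding}. Where you diverge, and what you yourself flag as the main obstacle, is the proposed isomorphism $\uca(\Diamond_l(\mb{T})^{\bs{d}},\b{s}_l(\mb{T}))_{D(\mb{T})} \cong \bigotimes_{\t} \uca(\Delta_l^\t,\b{s}_l^\t)_{D_\t}$: this is more than the argument needs. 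The paper only records a one-sided containment. After freezing $\bs{d}$ the mutable vertices decouple by triangle, so the localized upper cluster algebra contains each $\uca(\Delta_l^\t,\b{s}_l^\t)$, hence each $k[X_3^\t]$ by Theorem \ref{T:triple}. Lemma \ref{L:embedding} says the closed embedding $X_m^\circ \hookrightarrow \prod_\t (X_3^\t)^\circ$ induces a surjection $\bigotimes_\t k[(X_3^\t)^\circ] \twoheadrightarrow \SI_\bl(S_l^m)_{D(\mb{T})}$, i.e.\ $\SI_\bl(S_l^m)_{D(\mb{T})}$ is generated by the images of the $k[X_3^\t]$ together with the inverse diagonal variables. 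A superalgebra containing all those generators gives the reverse inclusion immediately, with no need to identify the kernel of that surjection. Your concern about the Noetherian hypothesis of Lemma \ref{L:reduce2frozen} is well placed (the paper invokes the lemma without comment), but the fix you sketch is incomplete: Proposition \ref{P:Dm} and Theorem \ref{T:GCC} produce a $\g$-indexed $k$-linear basis, which does not by itself yield finite generation as a $k$-algebra.
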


\begin{proof} We already have from Lemma \ref{L:contain} the containment ``$\supseteq$".
By Lemma \ref{L:reduce2frozen}, $\uca(\Diamond_l(\mb{T}),\b{s}_l(\mb{T}))_{D(\mb{T})}$ is the same as $\uca(\Diamond_l(\mb{T})^{d},\b{s}_l(\mb{T}))_{D(\mb{T})}$
where $\Diamond_l(\mb{T})^d$ is obtained from $\Diamond_l(\mb{T})$ by freezing all diagonal vertices.
Then $\uca(\Diamond_l(\mb{T}),\b{s}_l(\mb{T}))_{D(\mb{T})}$ contains all $\uca(\Delta_l^\t,\b{s}_l^\t)$ $(\t\in\mb{T})$.
Recall from Theorem \ref{T:triple} that $\uca(\Delta_l^\t,\b{s}_l^\t)$ is equal to $k[X_3^\t]=\SI_\bl(S_l^\t)$. 
By Lemma \ref{L:embedding}, $\SI_\bl(S_l^m)_{D(\mb{T})}$ is generated by all $\SI_\bl(S_l^\t)$ and inverse cluster variables on the diagonal $d(\mb{T})$.
We thus get the other containment ``$\subseteq$".
\end{proof}

\begin{lemma} \label{L:flip_sl} Let $\bs{\mu}:=\bs{\mu}^\d$ be the sequence of mutations corresponding to the flip along a diagonal $\d$. Then up to a sign we have that 
$\bs{\mu}^{}\left(\b{s}_l(\mb{T})\right) = \b{s}_l(\mb{T}')$.
\end{lemma}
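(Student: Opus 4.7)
The plan is to reduce the statement to a local question on the quadruple flag, match $\GL_\bl$-characters on both sides of the claimed equality, and then invoke $1$-dimensionality of the extremal weight spaces in $\SI_\bl(S_l^4)$.

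First, I would observe that $\bs{\mu}^\d$ consists only of mutations at vertices in the interior of the diamond formed by the two triangles of $\mb{T}$ adjacent to $\d$; it fixes every vertex of $\Diamond_l(\mb{T})$ lying outside that diamond, and it fixes all frozen vertices on the outer boundary of the diamond. Since $\mb{T}$ and $\mb{T}'$ agree outside the two flipped triangles and have the same outer quadrilateral, the cluster variables $\b{s}_l(\mb{T})(v)$ and $\b{s}_l(\mb{T}')(v)$ already coincide at every such fixed $v$. So it suffices to prove the claim when $m=4$, $\mb{T}$ is the two-triangle triangulation of the quadrilateral with diagonal $\d=\br{bc}$, and $\mb{T}'$ is its flip along $\d$.

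Second, I would set up the correct algebraic framework. By the Laurent phenomenon together with Lemma \ref{L:contain}, both $\bs{\mu}^\d(\b{s}_l(\mb{T}))(v)$ and $\b{s}_l(\mb{T}')(v)$ lie in $\SI_\bl(S_l^4)$ for every $v$. I would then equip $\SI_\bl(S_l^4)$ with its natural $\GL_\bl$-character grading. The assignment of Example \ref{ex:Slm} defines a weight configuration $\bs{\sigma}$ on $\Diamond_l(\mb{T})$ realized by $\b{s}_l(\mb{T})$; because mutations of seeds preserve multihomogeneity (see the discussion after Definition \ref{D:wtconfig}), the sequence $\bs{\mu}^\d$ transforms $\bs{\sigma}$ into a weight configuration $\bs{\sigma}'$ on $\Diamond_l(\mb{T}')=\bs{\mu}^\d(\Diamond_l(\mb{T}))$ via formula \eqref{eq:mu_wt}, and the weight of $\bs{\mu}^\d(\b{s}_l(\mb{T}))(v)$ is precisely $\bs{\sigma}'(v)$.

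Third, I would verify combinatorially that $\bs{\sigma}'(v)$ agrees with the weight $\sigma_v$ that Example \ref{ex:Slm} prescribes for $v$ viewed as a vertex of $\Diamond_l(\mb{T}')$. This is the main technical step. Following the rectangle-by-rectangle decomposition $\bs{\mu}^\d=\bs{\mu}_{l-1}\cdots\bs{\mu}_1$ from Section \ref{S:glue}, one iteratively applies \eqref{eq:mu_wt}; the verification reduces to elementary manipulations of the coordinate vectors $\e_i^a,\e_j^b,\e_k^c,\e_{k'}^d$ together with the hive relation $\e_i^a+\e_j^b+\e_k^c=\e_l-\sigma_v$. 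I expect this bookkeeping, with its cyclic orderings and sign conventions, to be essentially the only obstacle in the argument; but it is a finite, purely local computation with no conceptually hard step.

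Finally, each $\bs{\sigma}'(v)=\sigma_v$ is an indivisible extremal weight of $\SI_\bl(S_l^4)$ corresponding to a real Schur root, so by Example \ref{ex:Slm} the weight space $\SI_\bl(S_l^4)_{\bs{\sigma}'(v)}$ is $1$-dimensional. Hence $\bs{\mu}^\d(\b{s}_l(\mb{T}))(v)$ and $\b{s}_l(\mb{T}')(v)$ differ by a nonzero scalar in $k$; comparing Laurent leading terms (or evaluating at a single explicit representation such as a flag of the form $(F^-,\dots)$) pins this scalar down to $\pm 1$, completing the proof.
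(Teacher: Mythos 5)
Your proposal follows essentially the same path as the paper: reduce to the local two-triangle picture, mutate the weight configuration via \eqref{eq:mu_wt}, identify the resulting weights with those of $\b{s}_l(\mb{T}')$, and invoke the one-dimensionality of the real-Schur-root weight spaces to conclude the two variables agree up to a scalar $c \in k^\times$. That much is exactly the paper's argument.

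The one place you diverge — and where your plan is soft — is the final step of pinning $c$ down to $\pm 1$. You suggest either ``comparing Laurent leading terms'' or ``evaluating at a single explicit representation.'' Neither is quite there. The Laurent-leading-term comparison presupposes that $\b{s}_l(\mb{T}')(v)$ lies in $\mc{L}_{\Diamond_l(\mb{T})}(\b{s}_l(\mb{T}))$ \emph{and} has a well-defined $\g$-vector there matching that of $\bs{\mu}^\d(\b{s}_l(\mb{T}))(v)$; neither is established by what you've written (the character $\g\bs{\sigma}$ does not determine $\g$, so equality of $\bs{\sigma}$-weights does not give equality of $\g$-vectors). Evaluating at an explicit representation is a direct computation, but it gives you no a priori reason the answer should be $\pm 1$ rather than some other nonzero scalar. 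The paper closes this gap cleanly by a symmetry trick: the reversed sequence $\bs{\mu}^{-1}$ is the flip from $\mb{T}'$ back to $\mb{T}$, so by the same weight-space argument $\bs{\mu}^{-1}(\b{s}_l(\mb{T}')(u)) = c\,\b{s}_l(\mb{T})(u)$ with the same constant, whence $\b{s}_l(\mb{T})(u) = c^2\,\b{s}_l(\mb{T})(u)$ and $c^2=1$. That involutivity argument is the conceptually correct way to finish, and you should adopt it in place of the vaguer suggestions.
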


\begin{proof} %We have known that $\bs{\mu}(\Diamond(\mb{T})) = \Diamond(\mb{T}')$.
	It suffices to prove for two adjacent triangles, say the left one of Figure \ref{f:flip}.
	Recall that $\bs{\mu}$ is the composition of the sequences $\bs{\mu}_k$.
	In \cite{FG} the authors described the quiver after applying each $\bs{\mu}_k$ (see \cite[Figure 10.3]{FG}).
	Let $\bs{\sigma}_l$ be the weight configuration on $\Diamond_l(\mb{T})$ defined by that
	$\bs{\sigma}_l(u)$ is equal to the $\sigma$-weight of $\b{s}_l(\mb{T})(u)$.
	We can verify by \eqref{eq:mu_wt} and induction that $\bs{\sigma}_l':=\bs{\mu}(\bs{\sigma}_l)$ is given by
		$$\bs{\sigma}_l'(u) = \begin{cases} 
		\e_l-\e_{i+k}^a - \e_{j-k}^b - \e_{k}^d &  \text{if $u=\vsm{a,b,c\\i,j,k}$ is a vertex in $[a,b,d]$;}\\  
		\e_l-\e_{i+j}^a - \e_{j}^d - \e_{k-j}^c &  \text{if $u=\vsm{a,b,c\\i,j,k}$ is a vertex in $[a,d,c]$;}\\
		\e_l-\e_{k}^a - \e_{i-k}^b - \e_{j+k}^d &  \text{if $u=\vsm{b,d,c\\i,j,k}$ is a vertex in $[a,b,d]$;}\\  
		\e_l-\e_{i}^a - \e_{j+i}^d - \e_{k-i}^c &  \text{if $u=\vsm{b,d,c\\i,j,k}$ is a vertex in $[a,d,c]$.}\end{cases} $$
Note that $\bs{\sigma}_l'(u)$ is the weight of $\b{s}_l(\mb{T}')(u)$.
Since each such weight corresponds to a real Schur root of $S_l^m$, the corresponding weight space is one-dimensional. 
We conclude that $\bs{\mu}(\b{s}_l(\mb{T})(u)) = c\b{s}_l(\mb{T}')(u)$ for some $c\in k$.
Now we apply the sequence of mutations $\bs{\mu}^{}$ in reversed order. 
Note that the reserved sequence $\bs{\mu}^{-1}$ is a sequence corresponding to the flip of $\mb{T}'$.
So 
$$\b{s}_l(\mb{T})(u)=\bs{\mu}^{-1}\bs{\mu}(\b{s}_l(\mb{T})(u)) = c\bs{\mu}^{-1}(\b{s}_l(\mb{T}')(u))= c^2(\b{s}_l(\mb{T})(u)).$$
We must have that $c=1$ or $c=-1$.
\end{proof}

\begin{theorem} \label{T:mtuple} The semi-invariant ring $\SI_\bl(S_l^m)$ is equal to the upper cluster algebra $\uca(\Diamond_l(\mb{T}),\b{s}_l(\mb{T}))$ for any triangulation $\mb{T}$ of $\mc{D}_m$.
Moreover, $(\Diamond_l(\mb{T}),W_l(\mb{T}))$ is a polyhedral cluster model.
\end{theorem}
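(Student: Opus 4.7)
The second assertion, that $(\Diamond_l(\mb{T}),W_l(\mb{T}))$ is a polyhedral cluster model, is exactly Proposition \ref{P:Dm}, so I focus on the equality of rings. The case $m=3$ is Theorem \ref{T:triple}, so assume $m\geq 4$. Because Lemma \ref{L:flip_sl} identifies the two seeds across any flip (up to signs on cluster variables, which do not affect the ambient UCA), and any two consistent triangulations of $\mc{D}_m$ are connected by a sequence of flips, the upper cluster algebra $\uca(\Diamond_l(\mb{T}),\b{s}_l(\mb{T}))$ does not depend on the choice of triangulation; write $\uca$ for this common algebra. By Lemma \ref{L:contain} we have $\uca \subseteq \SI_\bl(S_l^m)$, inducing a regular birational morphism $f: X:=\Spec\SI_\bl(S_l^m) \to Y:=\Spec\uca$.

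The plan is to close the argument exactly as in Theorem \ref{T:triple}, via Lemma \ref{L:excsub}. Corollary \ref{C:local=} tells us that $f$ is an isomorphism over $Y\setminus Z(D(\mb{T}))$ for every triangulation $\mb{T}$, so any exceptional subvariety of $f$ must be contained in $Z(D(\mb{T}_1))\cap Z(D(\mb{T}_2))$ for any two triangulations $\mb{T}_1,\mb{T}_2$; it therefore suffices to exhibit a pair of triangulations for which this intersection has codimension at least $2$ in $X$, for then no exceptional divisor can exist and $f$ must be an isomorphism. To produce such a pair, label the marked points of $\mc{D}_m$ cyclically $1,\dots,m$ and take $\mb{T}_1$ and $\mb{T}_2$ to be the fan triangulations with apices at vertex $1$ and vertex $2$, respectively. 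Their diagonal sets $\{\{1,i\}:3\le i\le m-1\}$ and $\{\{2,j\}:4\le j\le m\}$ are manifestly disjoint.

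Each diagonal vertex $v$ lying on a diagonal $\{b,c\}$ contributes to $D(\mb{T})$ the Schofield semi-invariant $s_{0,j,k}^{a,b,c}$, whose weight $\e_l-\e_j^b-\e_k^c$ depends essentially on the unordered pair $\{b,c\}$; so disjointness of diagonals forces every weight appearing in $D(\mb{T}_1)$ to differ from every weight appearing in $D(\mb{T}_2)$. These weights are all indivisible extremal weights in $\Sigma_\bl(S_l^m)$ coming from real Schur roots, so by Lemma \ref{L:irreducible} the corresponding functions are irreducible and, because the weight spaces are one-dimensional, functions with distinct weights are not associate. Since $\SI_\bl(S_l^m)$ is factorial by \cite{PV}, any two non-associate irreducibles are coprime and their common zero locus has codimension $2$. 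Writing $Z(D(\mb{T}_1))\cap Z(D(\mb{T}_2))$ as the finite union of such pairwise zero-loci yields the required codimension estimate, and the Lemma \ref{L:excsub} template from Theorem \ref{T:triple} then forces $f$ to be an isomorphism. The technical heart of the plan is precisely this coprimality verification via the weight/real-Schur-root argument; once it is in hand, the remainder is a direct transcription of the method already used in the triple-flag case.
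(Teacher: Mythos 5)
Your proof is correct and follows essentially the same strategy as the paper's: the inclusion from Lemma \ref{L:contain}, the local equality from Corollary \ref{C:local=}, a pair of triangulations with disjoint diagonal sets, factoriality plus irreducibility of the diagonal Schofield semi-invariants to get the codimension-two estimate, and Lemma \ref{L:excsub} to conclude. Your explicit choice of the two fan triangulations and your weight-based verification that the irreducible factors appearing in $D(\mb{T}_1)$ and $D(\mb{T}_2)$ are pairwise non-associate are useful details that the paper's proof leaves implicit (though note that the non-associateness follows already from the fact that units in the graded ring sit in weight zero, rather than from one-dimensionality of the weight spaces).
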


\begin{proof} Let $\mb{T}^\dagger$ be another triangulation of $\mc{D}_m$ such that $\mb{T}$ and $\mb{T}^\dagger$ do not share any diagonal.
By Corollary \ref{C:local=}, we have that $\SI_\bl(S_l^m)_{D(\mb{T})} = \uca(\Diamond_l(\mb{T}),\b{s}_l(\mb{T}))_{D(\mb{T})}$.
The same statement holds if we replace $\mb{T}$ by $\mb{T}^\dagger$.
By Lemma \ref{L:flip_sl}, we have that $\uca(\Diamond_l(\mb{T}),\b{s}_l(\mb{T}))=\uca(\Diamond_l(\mb{T}'),\b{s}_l(\mb{T}'))$.
Let $X:=\Spec\left(\SI_\bl(S_l^m)\right)$ and $Y:=\Spec \left(\uca(\Diamond_l(\mb{T}),\b{s}_l(\mb{T})) \right)$.
We have a regular birational map $X \to Y$ induced by the inclusion of Lemma \ref{L:contain}.
To show it is in fact an isomorphism, it suffices to show according to Lemma \ref{L:excsub} that
$Z(D(\mb{T})) \cap Z(D(\mb{T}^\dagger))$ has codimension 2 in $X$.
Recall that each function in $\b{s}_l(\mb{T})$ and $\b{s}_l(\mb{T}^\dagger)$ is irreducible.
Since $X$ is factorial (\cite{PV}) and all irreducible factors in $D(\mb{T})$ and $D(\mb{T}^\dagger)$ are distinct,
we conclude that the intersection has codimension 2.
The last statement was proved in Proposition \ref{P:Dm}. 
\end{proof}

%\begin{definition} An IQP $(\Delta,W)$ is called {\em nicely frozen} if $\g$ is $\mu$-supported whenever $C_W(\g)\in \uca(\Delta)$.	
%\end{definition}

%\begin{lemma} If $(\Delta,W)$ is rigid, then it is nicely frozen.
%\end{lemma}
%
%\begin{proof}  Let $\mc{M}=\Coker(\g)$. We can rewrite \eqref{eq:genCC} as
%	$$??$$
%	It is suffices to show that $\epsilon$ vanish at each frozen vertex $v$.
%	It is proved in \cite[]{DWZ2} that this holds if there is a subrepresentation $N$ of $M$ such that
%	$$\Ker(\gamma_v)\subseteq N_{\rm out}(v),\ \gamma_v(N_{\rm out}(v))=N_{\rm in}(v).$$
%	We shall construct such a subrepresentation as follows.
%	Roughly speaking, this is the subrepresentation generated by $M_{\rm out}$ (the smallest subrepresentation containing $M_{\rm out}$).
%	More precisely, we let $N(u)=\sum_p pM_{\rm out}$ where the sum is over all path $p$ from {\rm out} to $u$. 
%	We claim that $N_{\rm in}=\gamma_v(N_{\rm out})$.
%\end{proof}

\section*{Acknowledgement}
The first-named author would like to thank the NCTS (National Center for Theoretical Sciences) and the University of Connecticut for the financial support.
The second-named author was partially supported by NSF grant DMS-1400740.

\bibliographystyle{amsplain}

\end{document}